\newcommand{\footremember}[2]{%
    \footnote{#2}
    \newcounter{#1}
    \setcounter{#1}{\value{footnote}}%
}
\newcommand{\ds}{\displaystyle}
\newcommand{\T}{\mathsf{T}}
\def \rla{\rangle}
\def \lla{\langle}
\renewcommand{\Re}{{\rm I}\! {\rm R}}
\DeclareMathOperator*{\sgn}{sgn}
\DeclareMathOperator*{\diag}{diag}
\DeclareMathOperator*{\argmin}{arg\,min}
\DeclareMathOperator*{\rank}{rank}
\DeclareMathOperator*{\Ker}{Ker}
\DeclareMathOperator*{\Ran}{Ran}
\DeclareMathOperator*{\Fix}{Fix}
\DeclareMathOperator*{\supp}{supp}
\newtheorem{theorem}{Theorem}[section]
\newtheorem{lemma}[theorem]{Lemma}
\newtheorem{corollary}[theorem]{Corollary}
\newtheorem{proposition}[theorem]{Proposition}
\theoremstyle{definition}
\newtheorem{definition}[theorem]{Definition}
\newtheorem{example}[theorem]{Example}
\theoremstyle{remark}
\newtheorem{remark}[theorem]{Remark}
\numberwithin{equation}{section}
\begin{document}

\title{Method of Alternating Projection for the Absolute Value Equation}

\author{%
 Jan Harold Alcantara\footremember{ntnu1}{Department of Mathematics,
                                         National Taiwan Normal University,
                                         Taipei 11677, Taiwan.} 
                     \footremember{dlsu}{Mathematics and Statistics Department,
                                         De La Salle University,
                                         Manila 1004, Philippines.
                                         Email:~\texttt{jan.alcantara@dlsu.edu.ph}}
 \and
 Jein-Shan Chen\footremember{ntnu2}{Department of Mathematics,
                                         National Taiwan Normal University,
                                         Taipei 11677, Taiwan.
                                         Email:~\texttt{jschen@math.ntnu.edu.tw}} 
 \and
 Matthew K.\ Tam\footremember{uom}{School of Mathematics and Statistics,
                                   The University of Melbourne,
                                   Parkville, VIC 3010, Australia.
                                   Email:~\texttt{matthew.tam@unimelb.edu.au}}
}

\maketitle

\begin{abstract}
	  A novel approach for solving the general absolute value equation $Ax+B|x| = c$ where $A,B\in \Re^{m\times n}$ and $c\in \Re^m$ is presented. We reformulate the equation as a feasibility problem which we solve via the method of alternating projections (MAP). The fixed points set of the alternating projections map is characterized under nondegeneracy conditions on $A$ and $B$. Furthermore, we prove linear convergence of the algorithm. Unlike most of the existing approaches in the literature, the algorithm presented here is capable of handling problems with $m\neq n$, both theoretically and numerically. 
\end{abstract}

\paragraph{MSC2020.}Primary 90-08, 65K10

\paragraph{Keywords.}Absolute value equation, alternating projections, fixed point sets

\section{Introduction}
We consider an approach for dealing with the \emph{absolute value equation~(AVE)} given by 
\begin{equation}\label{eqn:AVE}
Ax+B|x| = c
\end{equation}
where $A\in \Re^{m\times n}$, $B\in \Re^{m\times n}$, $c\in \Re^m$ and $|x|$ denotes the componentwise absolute value of $x\in \Re^n$. Equation \eqref{eqn:AVE} with $m=n$ was first introduced by Rohn in~\cite{Rohn04} as a generalization of the equation $Ax-|x|=c$, the latter being the subject of numerous research works for almost two decades now; see~\cite{CQZ11,BCFP16,Haghani15,HH10,Man07-2,Man08,Man15,MM06,RHF14,ZW09}. Inspired by the work of Rohn~\cite{Rohn04}, Mangasarian introduced the general non-square system \eqref{eqn:AVE} in~\cite{Man07}. It is worth noting that interest in this equation is primarily motivated by its equivalence with the \emph{linear complementarity problem (LCP)}, which encompasses several mathematical programming problems~\cite{CD68,CPS92,Man07,MM06,Prokopyev09}. In addition, AVEs are also intimately related with mixed integer programming~\cite{Prokopyev09} and interval linear equations~\cite{Rohn89}. 

\medskip 
Due to its equivalence with the LCP, solving \eqref{eqn:AVE} is likewise an NP-hard problem~\cite{Man07}. Meanwhile, conditions for existence, non-existence and uniqueness of solutions of the AVE are reported in~\cite{HH10,MM06,RHF14,WL20}. On the numerical side, there are already many algorithms aimed at solving \eqref{eqn:AVE} in the special case when $m=n$ and $B=-I_n$. These algorithms can be roughly classified into four categories:
\begin{itemize}
	\item[(a)] \textit{Newton methods}. Most of the algorithms for solving AVE in the literature are based on modifications of the Newton method. For instance, an efficient semismooth Newton method is proposed in~\cite{Man08} to directly handle the nonsmooth equation \eqref{eqn:AVE} using the $B$-subdifferential of $|x|$ (see also Definition~\ref{defn:clarke} and equation \eqref{eqn:GNM}). Variants of the semismooth Newton method were also proposed, which include the inexact semismooth Newton method~\cite{BCFP16} and the generalized Traub's method~\cite{Haghani15}. Another approach followed by several works involves replacing the absolute value function by its smooth approximation, which then permits the use of the classical Newton method. This technique, known as the smoothing Newton method, was employed in several works such as in~\cite{CQZ11,SYC18}. A combination of both the semismooth and smoothing Newton method is also described in~\cite{ZW09}.
	\item[(b)] \textit{Picard iteration methods}. The Newton methods described above involve solving (exactly or approximately) linear systems of equations with different coefficient matrices at each iteration, which may be computationally expensive. On the other hand, in the Picard iteration method proposed in~\cite{RHF14}, a linear system with a fixed coefficient matrix $A$ is solved in each iteration (see also equation \eqref{eqn:Picard}), and thus may be more efficient than Newton methods. However, this approach is limited to the case when $A$ is invertible. A variant of this algorithm, known as the Picard-HSS iteration, is proposed in~\cite{Salkuyeh14} for handling the case that $A$ is non-Hermitian positive definite. The Douglas-Rachford splitting method recently proposed in~\cite{CYH21} may also be viewed as an extension of the Picard iterations \eqref{eqn:Picard}. 
	\item[(c)] \textit{Matrix splitting iteration method}. Under this category are two algorithms, namely the SOR-like iteration method~\cite{KM17} and the Gauss-Seidel iteration method~\cite{EHS17}. We note the observation that the Picard iteration method~\cite{RHF14} is a special case of the SOR-like iteration method, although the latter is derived from a matrix splitting approach. 
	\item[(d)] \textit{Concave minimization approach}. Mangasarian pioneered this approach by reformulating the AVE as a concave minimization problem and then using the successive linearization algorithm to solve the resulting reformulated problem~\cite{Man07-2,Man15}. In another recent work~\cite{AHM18}, the AVE is reformulated as a complementarity problem, which was smoothly approximated by a concave minimization problem. 
\end{itemize}
Meanwhile, to the best of our knowledge, the only method which can handle AVE \eqref{eqn:AVE} when $B\neq -I_n$ and $m\neq n$ is the successive linearization algorithm via concave minimization proposed in~\cite{Man07}. 

\medskip 
In this paper, we propose a simple approach for solving the general AVE \eqref{eqn:AVE} which, like~\cite{Man07}, does not require $B=-I_n$ or $m=n$. Moreover, our approach does not fall in any of the categories described above. We reformulate the AVE as a feasibility problem and then use the method of alternating projections (MAP) to solve the resulting problem. By introducing an auxiliary variable $y\in \Re^n$, we have that $x\in \Re^n$ solves \eqref{eqn:AVE} if and only if the pair $(x,y)\in \Re^n \times \Re^n$ solves 
\[Ax+By = c \qquad \text{and} \qquad y=|x| .\]
The above system of equations suggests the reformulation of \eqref{eqn:AVE} as a \emph{feasibility problem} given by 
\begin{equation}\label{eqn:AVE_as_a_FP}
\text{find}~(x,y)\in S_1 \cap S_2 \subseteq \Re^n \times \Re^n
\end{equation}
where the constraint sets, $S_1$ and $S_2$, are given by 
\begin{equation}\label{eqn:S1S2}
\begin{array}{rl}
S_1 & := \{ (x,y)\in \Re^n \times \Re^n :Ax+By = c\} \qquad \text{and} \\
\qquad S_2 & := \{ (x,y) \in \Re^n \times \Re^n: y = |x|\}.
\end{array}
\end{equation}
A simple algorithm to solve \eqref{eqn:AVE_as_a_FP} is the \emph{method of alternating projections (MAP)}: Given an initial point $z^0 = (x^0,y^0) \in \Re^n \times \Re^n$, MAP generates a sequence of iterates according to the rule:
\begin{equation}\label{eqn:MAP}
z^{k+1} = (x^{k+1},y^{k+1})\in (P_{S_1} \circ P_{S_2})(z^k) \qquad \forall k \in \mathbb{N},
\end{equation}
where $P_S$ is the possibly multivalued metric projector onto the set $S$ given by 
\[P_S(z) := \left\lbrace s\in S : \|s-z\| \leq \|t-z\| ~\forall t\in S\right\rbrace .\]
When $S$ is nonempty and closed, the image of $P_S$ at each point is nonempty. If in addition, $S$ is convex, the function $P_S$ is single-valued everywhere. Whenever $P_S(z)$ is single-valued, say $P_S(z)=\{s\}$, we simply write $s=P_S(z)$. 

\medskip 
For the theoretical analysis, our main objectives in this paper include the characterization of the set of fixed points of the alternating projections mapping $P_{S_1}\circ P_{S_2}$ and the establishment of the convergence of the MAP iterations \eqref{eqn:MAP}. Whenever the sequence produced by \eqref{eqn:MAP} is convergent, its limit may or may not be a solution of the feasibility problem \eqref{eqn:AVE_as_a_FP}. In Section \ref{sec:fixedpoints}, we provide sufficient conditions so that this limit corresponds to a point in $S_1\cap S_2$. A general condition is given for the case when $m$ and $n$ are arbitrary, but more specific conditions will be provided for the case $m=n$. For instance, one of our main results indicates that if the matrix $Q:=(A^{\T}+B^{\T})(A^{\T}-B^{\T})^{-1}$ is a $P$-matrix, then if the sequence generated by \eqref{eqn:MAP} is convergent, its limit solves the feasibility problem \eqref{eqn:AVE_as_a_FP}. In Section \ref{subsec:localconvergence}, we establish the local convergence of the algorithm \eqref{eqn:MAP} using the theory developed by Dao and Tam (2019) in~\cite{DT19}, which uses ideas originally developed in~\cite{BN14,Tam18}. We also  present a new complementarity function in Section \ref{subsec:newCfunction} which we use to provide an alternative convergence analysis of the MAP iterates. Despite the difficulty of proving the global convergence of \eqref{eqn:MAP} due to the nonconvexity of $S_2$, we prove in Section \ref{subsec:newCfunction} a weaker result implying the impossibility of the iterates to be trapped in some particular region not containing a point in $S_1\cap S_2$ (see Proposition~\ref{prop:impossible_to_be_trapped_in_wrong_region}). Moreover, by utilizing the convergence theory of Attouch, Bolte and Svaiter (2013) for semi-algebraic and tame problems~\cite{ABS13}, we prove in Section \ref{subsec:globalconvergence_relaxedMAP} the global convergence to stationary points of a relaxed version of the iterations \eqref{eqn:MAP} given by 
\begin{equation}\label{eqn:MAP_relaxation}
w^{k+1} \in (1-\gamma)P_{C_2}(w^k) + \gamma (P_{C_1}\circ P_{C_2})(w^k), \qquad \gamma \in (0,1).
\end{equation}
That is, we take the convex combination of the iterates \eqref{eqn:MAP} with the mapping $P_{C_2}$. Although MAP iterations \eqref{eqn:MAP} are not covered by the above relaxation, we note that the former is the limiting case of \eqref{eqn:MAP_relaxation} when $\gamma=1$. The linear rate of convergence of the MAP algorithm \eqref{eqn:MAP} to a point in $S_1\cap S_2$ is proved in Section \ref{subsec:rateofconvergence} using the theory of Lewis, Luke and Malick (2009) in~\cite{LLM09}. Finally, we present in Section \ref{subsec:relatedfixedpointalgorithm} another fixed point algorithm, which we call the MAP-LS algorithm, that can be derived from the method of alternating projections. 

\medskip 

The numerical contributions of our work are demonstrated in Section \ref{sec:numerical}. For the case $m=n$, the MAP and MAP-LS algorithms \eqref{eqn:MAP} are shown to be more robust and more efficient in solving randomly generated AVEs as compared with other methods from the four categories (a)-(d) of algorithms described above. For arbitrary $m$ and $n$, we illustrate the superior performance of MAP over the successive linearization algorithm in~\cite{Man07}, which is the only algorithm with which we can compare our method. Hence, our proposed algorithms have several merits from a numerical perspective, and are indeed an important contribution to the growing literature of AVE. 

\medskip 
In summary, the structure of this paper is as follows. In Section \ref{sec:fixedpoints}, we characterize the fixed point sets of the alternating projections mapping. Next, we present the convergence analysis of the algorithms in Section \ref{sec:convergenceanalysis}. Finally, we illustrate the applicability of our approach through numerical experiments in Section \ref{sec:numerical}.

\section{Fixed points of the alternating projections map}\label{sec:fixedpoints}
The method of alternating projections \eqref{eqn:MAP} is essentially aimed at finding a point $z\in \Re^n \times \Re^n$ that satisfies
\begin{equation}\label{eqn:fixedpointequation}
z\in (P_{S_1}\circ P_{S_2})(z) ,
\end{equation}
with the hope of solving the feasibility problem \eqref{eqn:AVE_as_a_FP}. A point satisfying \eqref{eqn:fixedpointequation} is called a \emph{fixed point} of the mapping $P_{S_1}\circ P_{S_2}$, and we denote the set of all fixed points of $P_{S_1}\circ P_{S_2}$ by $\Fix (P_{S_1}\circ P_{S_2})$. Necessarily, a point in the intersection of $S_1$ and $S_2$ must be a fixed point of $P_{S_1}\circ P_{S_2}$, that is, $S_1\cap S_2 \subseteq \Fix (P_{S_1}\circ P_{S_2})$. However, the converse is not necessarily true. Hence, if a sequence generated by MAP \eqref{eqn:MAP} is convergent, then its limit need only be a candidate solution to the feasibility problem \eqref{eqn:AVE_as_a_FP}.

\medskip 

This section is devoted to characterizing the set of fixed points of the alternating projections map. More precisely, we provide conditions on the matrices $A$ and $B$ which will allow us to determine which fixed points of $P_{S_1}\circ P_{S_2}$ belong to $S_1\cap S_2$. 

\subsection{Change of variables}\label{subsec:changeofvariables}

Instead of directly dealing with the sets $S_1$ and $S_2$ given by \eqref{eqn:S1S2}, we consider a change of variables which we find more convenient in our subsequent analysis. In general, we may consider any linear transformation $z = Rw$ where $z=(x,y)$, $w=(u,v)$ and $R\in \Re^{2n\times 2n}$ is a unitary matrix. Letting  $R = \begin{bmatrix}
R_1 & R_2 \\
R_3 & R_4
\end{bmatrix}$ with $R_i\in \Re^{n\times n}$, we see that $z\in S_i$ if and only if $w\in C_i$ for $i=1,2$ where 
\[C_1:= \{ w=(u,v)\in \Re^n\times \Re^n: \begin{bmatrix}
AR_1 + BR_3 & AR_2 + BR_4 
\end{bmatrix} w = c \}, \]
and 
\[C_2  := \{ w = (u,v)\in \Re^n\times \Re^n : |R_1u +R_2 v| = R_3 u +R_4 v\}. \]
With these, AVE \eqref{eqn:AVE} is also equivalent to the feasibility problem 
\begin{equation}\label{eqn:AVE_as_a_FP2}
\text{find}~w=(u,v)\in C_1 \cap C_2.
\end{equation}
Accordingly, we consider the MAP iterates given by 
\begin{equation}\label{eqn:MAP_2}
w^{k+1} \in (P_{C_1} \circ P_{C_2})(w^k).
\end{equation}	
Since we have chosen $R$ to be a unitary matrix, it follows that for all $z=(x,y) \in \Re^n \times \Re^n$, 
\[P_{S_i}(z) = RP_{C_i}(R^{\T}z), \quad  i=1,2. \]
Consequently, 
\begin{equation}\label{eqn:StoC}
(P_{S_1}\circ P_{S_2})(z) = R \left( (P_{C_1}\circ P_{C_2})(R^{\T}z)\right)
\end{equation}
Thus, if $z^0$ is the initial point for the original MAP iterates \eqref{eqn:MAP} and we set $w^0 =R^{\T}z^0$ for the iterations \eqref{eqn:MAP_2}, then $z^{k+1} = Rw^{k+1}$ for all $k\geq 0$. Moreover, we also have from \eqref{eqn:StoC} that
\begin{equation}\label{eqn:fixedpoints_transformed}
\Fix (P_{S_1} \circ P_{S_2}) = \{ Rw: w \in \Fix (P_{C_1} \circ P_{C_2})\} = R \Fix (P_{C_1} \circ P_{C_2}).
\end{equation} 

In the sequel, all our analyses and results are based on the constraint sets induced by $R =\frac{1}{\sqrt{2}}  \left[ \begin{array}{rr}
I_n & -I_n \\
I_n & I_n
\end{array} \right]$. Defining
\begin{equation}\label{eqn:T_map}
T := \begin{bmatrix}
A+B & -A+B
\end{bmatrix} \in \Re^{m \times 2n},
\end{equation}
it can be verified that $C_1$ is given by 
\begin{equation}\label{eqn:C1}
C_1 = \{ w \in \Re^n \times \Re^n ~:~ Tw = \sqrt{2}c\} .
\end{equation}
On the other hand, $(u,v) \in C_2$ if and only if $|u-v| = u+v$. Using the fact that $t+|t| = 2t_+$ where $t_+:= \max (0,t)$ (with the maximum understood in the pointwise sense), we see that $|u-v| = u+v$ if and only if $u-(u-v)_+ = 0$. Then, $C_2$ reduces to 
\begin{equation}\label{eqn:C2}
C_2 = \{ w=(u,v) \in \Re^n \times \Re^n ~:~ u \geq 0, ~v\geq 0 ,~\text{and}~\lla u, v \rla = 0 \}.
\end{equation}	
It follows that if $x$ solves \eqref{eqn:AVE}, then $(u,v)$ with $u=\frac{1}{\sqrt{2}}x_+$ and $v=\frac{1}{\sqrt{2}}(-x)_+$ solves the feasibility problem \eqref{eqn:AVE_as_a_FP2} with $C_1$ and $C_2$ given by \eqref{eqn:C1} and \eqref{eqn:C2}, respectively. Conversely, if $(u,v)$ solves \eqref{eqn:AVE_as_a_FP2}, then $x = \frac{1}{\sqrt{2}}(u-v)$ solves the AVE \eqref{eqn:AVE}.

\subsection{Projection formulas}\label{subsec:projectionformulas}
Important for our subsequent analysis and numerical simulations are the exact formulas for the projections involved in the MAP iterations given by \eqref{eqn:MAP_2}. The projection onto the affine set $C_1$ is well-known, which we recall in the following proposition. 

\medskip 
\begin{proposition}\cite[Lemma~4.1]{BK04}\label{prop:P_C1}
	Suppose that $c\in \Re^m$ is in the range of $T$ given by \eqref{eqn:T_map}. Then for any $w\in \Re^n$, we have 
	\begin{equation*}
	P_{C_1}(w) = w - T^{\dagger}(Tw-\sqrt{2}c),
	\end{equation*}
	where $T^{\dagger}$ is the Moore-Penrose inverse of $T$. 
\end{proposition}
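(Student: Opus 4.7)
The plan is to verify that the candidate point
\[
p := w - T^{\dagger}(Tw - \sqrt{2}c)
\]
both lies in $C_1$ and has residual $w-p$ orthogonal to the linear part of $C_1$. Since $C_1$ is a nonempty (because $c \in \Ran(T)$) closed affine subspace, the metric projector is single-valued and uniquely characterized by these two properties via the standard variational inequality for convex projections. So it is enough to check each property in turn.

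For membership, I would exploit the well-known fact that $TT^{\dagger}$ is the orthogonal projector onto $\Ran(T)$. By hypothesis, $\sqrt{2}c \in \Ran(T)$, and of course $Tw \in \Ran(T)$, so $Tw - \sqrt{2}c \in \Ran(T)$ and therefore $TT^{\dagger}(Tw - \sqrt{2}c) = Tw - \sqrt{2}c$. Applying $T$ to $p$ then gives
\[
Tp = Tw - TT^{\dagger}(Tw - \sqrt{2}c) = Tw - (Tw - \sqrt{2}c) = \sqrt{2}c,
\]
so $p \in C_1$. For orthogonality, note that the affine direction of $C_1$ is precisely $\Ker(T)$, and write $w - p = T^{\dagger}(Tw - \sqrt{2}c)$. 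Since $\Ran(T^{\dagger}) = \Ran(T^{\T}) = \Ker(T)^{\perp}$, the residual lies in $\Ker(T)^{\perp}$, which is exactly what the projection characterization requires. Combining both facts yields $p = P_{C_1}(w)$.

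There is no real obstacle here; this is a routine computation with the Moore-Penrose pseudoinverse. The only subtlety worth flagging is the use of the range hypothesis on $c$: without it, $TT^{\dagger}(Tw - \sqrt{2}c)$ would only equal the orthogonal projection of $Tw - \sqrt{2}c$ onto $\Ran(T)$, and the identity $Tp = \sqrt{2}c$ would fail. In particular, $c \in \Ran(T)$ is not merely a technical convenience—it is equivalent to $C_1 \neq \emptyset$, without which the claim itself is vacuous.
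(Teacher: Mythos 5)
Your proof is correct: checking that $p := w - T^{\dagger}(Tw-\sqrt{2}c)$ lies in $C_1$ via $TT^{\dagger}$ being the orthogonal projector onto $\Ran(T)$ (here the hypothesis $c\in\Ran(T)$ is used), and that $w-p\in\Ran(T^{\dagger})=\Ker(T)^{\perp}$, is exactly the standard characterization of projection onto an affine set. The paper itself gives no proof, simply citing \cite[Lemma~4.1]{BK04}, and your argument is the routine verification that result rests on, so there is no divergence to report.
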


\medskip

\medskip 
While $P_{C_1}$ is a single-valued operator, the projection onto $C_2$ is not due to the nonconvexity of $C_2$. 
\medskip 

\begin{proposition}\label{prop:P_C2}
	Let $w=(u,v)\in \Re^{n}\times \Re^n$ and let $z\in  P_{C_2}(w)$. Then for each $i=1,\dots, n$,
	\begin{equation}\label{eqn:P_C2}
	(z_i, z_{n+i}) \in \begin{cases}
	\{(0,(v_i)_+)\} & u_i < v_i \\
	\{((u_i)_+,0) \} & u_i > v_i \\
	\{ (0,(v_i)_+) , ((u_i)_+,0) \}  & u_i = v_i
	\end{cases}.
	\end{equation}
	In particular, $P_{C_2}$ is multivalued on $\{ (u,v): \exists i ~\text{such~that}~u_i = v_i>0\}$. 
\end{proposition}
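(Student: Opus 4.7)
The plan is to exploit the product structure of $C_2$: the constraints $u \geq 0$, $v \geq 0$, and $\langle u, v\rangle = 0$ are jointly equivalent to $u_i \geq 0$, $v_i \geq 0$, and $u_i v_i = 0$ for every $i \in \{1, \dots, n\}$. Consequently, after pairing the $i$-th coordinate of $u$ with the $i$-th coordinate of $v$, the set $C_2$ factors as an $n$-fold product of
\[
K := \{(s,t) \in \Re^2 : s\geq 0,\ t\geq 0,\ st=0\},
\]
and, since the Euclidean norm on $\Re^n\times\Re^n$ is compatible with this pairing, the projection $P_{C_2}(w)$ decomposes coordinate-wise into the product of the mappings $P_K(u_i, v_i)$. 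It therefore suffices to determine $P_K$ on $\Re^2$.

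To this end, I would write $K = K_s \cup K_t$ as the union of the two closed convex half-axes $K_s := \{(s, 0) : s \geq 0\}$ and $K_t := \{(0, t) : t \geq 0\}$. The projections onto these pieces are standard: $P_{K_s}(a,b) = (a_+, 0)$ and $P_{K_t}(a,b) = (0, b_+)$, and $P_K(a,b)$ consists exactly of those of the two candidates that minimize the distance to $(a,b)$. Using the elementary identity $(t - t_+)^2 - t^2 = -t_+^2$, a direct computation gives
\[
\|(a,b) - (a_+,0)\|^2 - \|(a,b) - (0,b_+)\|^2 \;=\; b_+^2 - a_+^2,
\]
so $(a_+, 0)$ is strictly closer when $a_+ > b_+$, $(0, b_+)$ is strictly closer when $b_+ > a_+$, and the two candidates are equidistant when $a_+ = b_+$.

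The remaining step is a short sign analysis translating the comparison between $a_+$ and $b_+$ into the trichotomy on $a - b$ stated in \eqref{eqn:P_C2}, with $(a,b) = (u_i, v_i)$. When $\max(a, b) > 0$, the order of $a_+$ and $b_+$ matches the order of $a$ and $b$, producing the first two branches when $a \ne b$ and two genuinely distinct projections when $a = b > 0$. When $\max(a, b) \leq 0$, one has $a_+ = b_+ = 0$, both candidates collapse to $(0, 0)$, and the formula remains valid regardless of which branch of \eqref{eqn:P_C2} is selected by the sign of $a - b$. Assembling these factors yields the stated characterization, and the multivaluedness claim follows at once, since the two candidate $i$-th components are genuinely distinct if and only if $u_i = v_i > 0$. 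The main obstacle is really just bookkeeping in the non-positive cases, where the positive-part notation silently absorbs the ``wrong branch''; this needs to be verified to be consistent with the three-case formula, but presents no genuine difficulty.
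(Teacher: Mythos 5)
Your proposal is correct and follows essentially the same route as the paper: reduce by separability of the squared distance to the coordinatewise projection onto the planar set $\{(a,b):a\geq 0,\ b\geq 0,\ ab=0\}$, then read off the three cases. The only difference is that you explicitly carry out the two-dimensional comparison (via the half-axes and the identity $\|(a,b)-(a_+,0)\|^2-\|(a,b)-(0,b_+)\|^2=b_+^2-a_+^2$), which the paper simply states as an easy calculation.
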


\begin{proof}
	Fix $w=(u,v)\in \Re^n \times \Re^n$. To prove the result, we need to solve the minimization problem
	\[\min _{\bar{w}\in C_2} \|\bar{w} - w\|^2.\]
	Letting $\bar{w} = (\bar{u},\bar{v})\in \Re^n \times \Re^n$, we have
	\begin{eqnarray*}
		\|\bar{w} - w\|^2 & = & \sum _{i=1}^n (\bar{u}_i - u_i)^2 + \sum _{i=1}^n (\bar{v}_i - v_i)^2 \\ 
		& = & \sum _{i=1}^n \| (\bar{u}_i - u_i, \bar{v}_i-v_i) \|^2.
	\end{eqnarray*}
	As the last expression is separable, we only need to consider the projection of an arbitrary point $(s,t)\in \Re^2$ onto the set 
	\begin{equation}\label{eqn:C2bar}
	M := \{ (a,b):a\geq 0, ~b\geq 0 ~\text{and}~ab=0\},
	\end{equation}
	which can be easily calculated as 
	\begin{equation}\label{eqn:P_C2bar}
	P_{M}(s,t) = \begin{cases}
	\{ (0,t_+)\} & \text{if}~s<t \\
	\{(s_+,0)\} & \text{if}~s>t \\
	\{ (0,t_+), (s_+,0)\} & \text{if}~s=t
	\end{cases}
	\end{equation}	
	This gives the formula \eqref{eqn:P_C2}. 
\end{proof}

\medskip 
Note that because of the convexity of $C_1$, we know that the map $P_{C_1}$ is firmly nonexpansive, i.e., $\| P_{C_1}(w)- P_{C_1}(w')\| ^2\leq \lla w-w',P_{C_1}(w)-P_{C_1}(w') \rla $ for all $w,w'\in \Re^n \times \Re^n $. The same cannot be said for $P_{C_2}$ due to the nonconvexity of $C_2$. However, $P_{C_2}$ is firmly nonexpansive on some subsets of $\Re^{n}\times \Re^n$ as proved in Corollary~\ref{cor:PC2_nonexpansive}. Before we present this result, we first introduce some notations which will be used for the remaining parts of this paper. 	

\medskip 
We denote by $\mathscr{T}$ the collection of all functions $\tau: \{1,2,\dots, n\} \to \{ 1,2\}$, so that $|\mathscr{T}|=2^n$. For each $\tau \in \mathscr{T}$, we let $S_{\tau}$ denote the set of all $w=(u,v) \in \Re^n \times \Re^n$ such that for each $i=1,\dots,n$, we have $(u_i,v_i)\in K_j$ if $\tau(i)=j$ for $j=1,2$ where 
\begin{align}
K_1 & :=\{ (a,b)\in \Re^2 :a>b~\text{or}~a=b\leq 0\} \quad \text{and} \label{eqn:K1_a>b} \\ 
K_2 & :=\{ (a,b) \in \Re^2 : a<b~\text{or}~a=b\leq 0\}. \label{eqn:K2_a<b}
\end{align}
Observe that $$\bigcup_{\tau \in \mathscr{T}} S_{\tau} = \Re^n \times \Re^n \setminus \{ (u,v):u_i=v_i>0~\text{for~some~}i\}.$$ For each $\tau\in \mathscr{T}$, we also let $R_{\tau} := S_{\tau} \cap C_2$ so that $C_2 = \bigcup _{\tau \in \mathscr{T}} R_{\tau}.$ Thus, $w\in R_{\tau}$ if and only if for each $i=1,\dots,n$, we have $(u_i,v_i)\in M_j$ if $\tau (i)=j$ for $j=1,2$ where 
\begin{align}
M_1 & :=\{ (a,b)\in \Re^2 :a\geq 0 ~\text{and}~b=0\} \quad \text{and} \label{eqn:M1_a>b} \\ 
M_2 & :=\{ (a,b) \in \Re^2 : b\geq 0 ~\text{and}~a=0\}. \label{eqn:M2_a<b}
\end{align}

\medskip 
\begin{corollary}\label{cor:PC2_nonexpansive}
	$P_{C_2}$ is firmly nonexpansive on $S_{\tau}$ for any $\tau \in \mathscr{T}$.
\end{corollary}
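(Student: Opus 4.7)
The plan is to reduce the firm nonexpansiveness of $P_{C_2}$ on $S_\tau$ to the firm nonexpansiveness of metric projections onto the two convex half-lines $M_1 = \{(a,0): a \geq 0\}$ and $M_2 = \{(0,b): b \geq 0\}$ in $\Re^2$, and then assemble the estimate coordinate-wise.

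First, I would observe that the definitions of $K_1$ and $K_2$ are precisely the regions where the formula \eqref{eqn:P_C2bar} for $P_M$ becomes single-valued and agrees with projection onto one of the two half-axes. Specifically, for $(s,t) \in K_1$ we have either $s > t$ (giving $P_M(s,t)=\{(s_+,0)\}$) or $s = t \leq 0$ (giving $P_M(s,t) = \{(0,0)\}$, which also equals $(s_+,0)$ in this case). Hence $P_M(s,t) = P_{M_1}(s,t)$ for every $(s,t) \in K_1$, and by an identical argument $P_M(s,t) = P_{M_2}(s,t)$ for every $(s,t) \in K_2$. Since $M_1$ and $M_2$ are closed convex sets, the maps $P_{M_1}$ and $P_{M_2}$ are firmly nonexpansive on all of $\Re^2$.

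Next, I would take arbitrary $w=(u,v), w'=(u',v') \in S_\tau$, and let $z \in P_{C_2}(w)$ and $z' \in P_{C_2}(w')$. By the coordinate-wise separability used in the proof of Proposition~\ref{prop:P_C2}, the vector $(z_i,z_{n+i})$ depends only on $(u_i,v_i)$, and analogously for $z'$. Since $w,w' \in S_\tau$, the pairs $(u_i,v_i)$ and $(u'_i,v'_i)$ both lie in $K_{\tau(i)}$ for every $i$, and by the previous step we may replace $P_{C_2}$ coordinate-wise by $P_{M_{\tau(i)}}$. The firm nonexpansiveness of $P_{M_{\tau(i)}}$ then yields, for each $i$,
\[
\|(z_i,z_{n+i}) - (z'_i,z'_{n+i})\|^2 \leq \langle (u_i,v_i)-(u'_i,v'_i),\, (z_i,z_{n+i})-(z'_i,z'_{n+i})\rangle.
\]
Summing over $i=1,\dots,n$ and regrouping yields $\|z-z'\|^2 \leq \langle w-w', z-z'\rangle$, which is exactly the firm nonexpansiveness of $P_{C_2}$ on $S_\tau$.

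There is no substantial obstacle here: the only subtle points are (i) verifying that $K_j$ was defined so as to exactly capture the region on which $P_M$ agrees with $P_{M_j}$ (in particular handling the diagonal case $s = t \leq 0$ where both selections in \eqref{eqn:P_C2bar} collapse to the same single point), and (ii) noting that the coordinate-wise structure of $P_{C_2}$ allows the inequality to be assembled from $n$ independent planar inequalities. Both are routine once the set-up is in place.
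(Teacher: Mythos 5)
Your proposal is correct and follows essentially the same route as the paper: identify $P_M$ restricted to $K_j$ with the projection $P_{M_j}$ onto the convex half-axis (including the collapse of the two selections at $s=t\le 0$), invoke firm nonexpansiveness of projections onto convex sets, and sum the resulting planar inequalities coordinate-wise using the separability of $P_{C_2}$. No gaps.
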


\begin{proof}
	First, we note that the restriction of $P_{M}$ given by \eqref{eqn:P_C2bar} to $K_j$ is precisely the projection mapping $P_{M_j}$, where $M_j$ is given by \eqref{eqn:M1_a>b}-\eqref{eqn:M2_a<b}. Since $M_j$ is convex, then $P_{M_j}$ is firmly nonexpansive on $K_j$. It follows that $P_M$ is firmly nonexpansive on $K_j$. 
	
	Given $\tau \in \mathscr{T}$, take two points $w=(u,v)\in S_{\tau}$ and $w' = (u',v')\in S_{\tau}$. Then the points $(u_i,v_i)$ and $(u_i',v_i')$ both lie on $K_1$ or $K_2$ for each $i=1,\dots,n$. Then by firm nonexpansiveness of $P_{M}$, we obtain
	\begin{eqnarray}
	\|P_{C_2}(w) - P_{C_2}(w')\|^2 & = & \sum _{i=1}^n \| P_{M}(u_i,v_i) - P_{M}(u_i',v_i')\|^2 \notag \\
	& \leq & \sum _{i=1}^n \lla (u_i,v_i)-(u_i',v_i') , P_{M}(u_i,v_i) - P_{M}(u_i',v_i') \rla \notag  \\
	& = & \lla w-w',P_{C_2}(w)-P_{C_2}(w') \rla . \notag 
	\end{eqnarray}
	This proves the desired result. 
\end{proof}

\medskip 
By invoking the fact that $C_1$ given by \eqref{eqn:C1} is an affine set, the next proposition describes a property of the MAP iterates \eqref{eqn:MAP_2} which is based on the following observation: When $n=1$, the set $C_1$ defines a straight line in $\Re^2$ provided that $A$ and $B$ are not both zero. Intuitively, one can see that if the line $C_1$ intersects $C_2$ but does not pass through the origin, then $P_{C_1}(w) \notin \Re^2_-$ for any $w\in C_2$. For $n>1$, we may conjecture that if $\bar{w}:=P_{C_1}(w)$ with $w\in C_2$, we either have (i) $\bar{w}\notin \Re^{n}_-\times \Re^n_-$ or (ii) $(\bar{w}_i,\bar{w}_{n+i}) \notin \Re^2 _-$ for all $i=1,\dots, n$ for any $w\in C_2$. The following proposition indicates that (i) holds, and we illustrate in Example~\ref{example:quadrantofPC1PC2} that (ii) does not hold in general. 

\medskip 
\begin{proposition}\label{prop:w^knotinR2-}
	If $c\neq 0$, $C_1 \cap C_2 \neq \emptyset$ and $\{w^k\}_{k=0}^{\infty}$ is any sequence generated by \eqref{eqn:MAP_2}, then $w^k\notin  \Re^{n}_- \times \Re^n_-$ for all $k\geq 1$. 
\end{proposition}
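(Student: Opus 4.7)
The plan is to exploit the characterization of $C_2$ from \eqref{eqn:C2} (every point has nonnegative components) together with the fact that projection onto the affine set $C_1$ produces a residual orthogonal to the direction space of $C_1$. Fix $k\geq 1$ and write $w^k \in P_{C_1}(\tilde w)$ with $\tilde w \in P_{C_2}(w^{k-1})\subseteq C_2$. By \eqref{eqn:C2} we have $\tilde w\geq 0$ componentwise. Since $C_1\cap C_2\neq \emptyset$, pick any $w^*\in C_1\cap C_2$; again $w^*\geq 0$ componentwise.

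Next I would invoke the projection formula (Proposition~\ref{prop:P_C1}). Because $C_1$ is an affine set with direction space $\Ker T$, the residual $\tilde w - w^k = T^{\dagger}(T\tilde w - \sqrt{2}c)$ lies in $(\Ker T)^{\perp}$, and since $w^* - w^k \in \Ker T$, one obtains the orthogonality identity
\begin{equation*}
\lla \tilde w - w^k,\; w^* - w^k\rla = 0.
\end{equation*}

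Now assume, towards a contradiction, that $w^k\in \Re^n_- \times \Re^n_-$, i.e., $w^k\leq 0$ componentwise. Combined with $\tilde w\geq 0$ and $w^*\geq 0$, this yields $\tilde w - w^k \geq 0$ and $w^* - w^k\geq 0$ componentwise, so each term of
\begin{equation*}
\sum_{i=1}^{2n} (\tilde w_i - w^k_i)(w^*_i - w^k_i) = 0
\end{equation*}
is nonnegative and hence vanishes. For each index $i$, either $\tilde w_i = w^k_i$, forcing $w^k_i \geq 0$ and thus $w^k_i = 0$; or $w^*_i = w^k_i$, forcing $w^k_i \geq 0$ and thus $w^k_i = 0$. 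Either way $w^k_i = 0$, so $w^k = 0$.

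Finally, $w^k=0\in C_1$ means $T\cdot 0 = \sqrt{2}c$, giving $c=0$, which contradicts the hypothesis $c\neq 0$. The only genuine subtlety is the componentwise dichotomy in the last step (making sure both cases collapse $w^k_i$ to $0$ given the sign constraints); everything else is a direct application of the projection identity for affine sets and the sign structure of $C_2$.
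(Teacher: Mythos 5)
Your proposal is correct and follows essentially the same route as the paper: project the $C_2$-point onto the affine set $C_1$, pair the residual against a feasible point $w^*\in C_1\cap C_2$, and use the componentwise nonnegativity of $C_2$ together with the assumed nonpositivity of $w^k$ to force a contradiction with $c\neq 0$. The only cosmetic difference is that you invoke the exact orthogonality $\lla \tilde w - w^k, w^*-w^k\rla = 0$ available for affine sets (concluding $w^k=0$, hence $c=0$), whereas the paper uses the convex-projection inequality $\lla w-\bar w, w^*-\bar w\rla \le 0$ and contradicts it with a strictly positive term; both arguments are sound and rest on the same mechanism.
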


\begin{proof}
	It is enough to show that given a point $w\in C_2$, we have $\bar{w}:=P_{C_1}(w) \notin \Re^n_- \times \Re^n_-$. Since  $C_1$ is a convex set, we have 
	\begin{equation*}
	\lla w - \bar{w} , w'-\bar{w} \rla \leq  0 \qquad \forall w'\in C_1.
	\end{equation*}
	In particular, we can take $w'=w^* \in C_1\cap C_2$ and obtain
	\begin{equation}\label{eqn:projection_inequality}
	\lla w - \bar{w}, w^* - \bar{w} \rla \leq 0.
	\end{equation}
	Since $c\neq 0$ and $T\bar{w}=\sqrt{2}c$, then $\bar{w}\neq 0$. Thus, if $\bar{w} \in \Re^n_- \times \Re^n_-$, then there exists some $i\in \{1,2\dots, 2n\}$ such that $\bar{w}_i< 0$. Since $w,w^*\geq 0$, we must have $w_i - \bar{w}_i >0$ and $w_i^*-\bar{w}_i>0$. Meanwhile, we also have that $w_j-\bar{w}_j \geq 0$ and $w_j^*-\bar{w}_j \geq 0$ for all $j$. In turn, we will obtain $\lla w-\bar{w}, w^*-\bar{w}\rla >0$ which contradicts \eqref{eqn:projection_inequality}. Hence, $\bar{w}\notin \Re^n _- \times \Re^n _-$ as desired.
\end{proof}

\medskip 
\begin{example}\label{example:quadrantofPC1PC2}
	Let $A= \left[ \begin{array}{rr}
	3 & -8 \\ 3 & 0
	\end{array} \right]$, $B=-I_2$ and $c=(6,9)/\sqrt{2}$. It can be verified that $C_1\cap C_2 = \{(3/\sqrt{2},0,0,0) \}$. For $w=(0,0,1,0) \in C_2$, one can check that $\bar{w}:=P_{C_1}(w) \approx  (1.8042,-0.5569,-0.7921,-0.6540)$. Note that $(\bar{w}_2,\bar{w}_4 ) \in \Re^2_-$. 
\end{example}

\medskip

\subsection{Characterization of fixed points for arbitrary $m$ and $n$}

We now provide a general condition which will allow us to distinguish which fixed points of $P_{C_1}\circ P_{C_2}$ belong to $C_1\cap C_2$. In the following, we denote by $\Ker(T)$ and $\Ran (T)$ the kernel and range of $T$, respectively. Given any affine set $S\subseteq \Re^n$, we denote its orthogonal complement by $S^{\perp}$. 

\begin{theorem}[Characterization of fixed point sets for arbitrary $m$, $n$]\label{theorem:fixedpoints_arbitrary} 
	Let $T\in \Re^{m\times 2n}$ be given by \eqref{eqn:T_map} and suppose that 
	\begin{equation}\label{eqn:linearlyregularintersection}
	\Ker(T)^{\perp} \cap \hat{C}_2 = \{0\},
	\end{equation}
	where 
	\begin{equation}\label{eqn:C2hat}
	\hat{C}_2 :=\{ w=(u,v) \in \Re^n \times \Re^n: u_iv_i =0 ~\forall i =1,\dots , n\}.
	\end{equation}
	Denote 
	\begin{eqnarray}
	\Omega & := & \{ w = (u,v) \in \Re^n \times \Re^n : (u_i,v_i) \notin \Re^2_{--} ~\forall i=1,\dots, n \}. \notag 
	\end{eqnarray}
	Then for any $c  \in \Re^m$,
	\begin{equation*}\label{eqn:fixedpoints=C1capC2_arbitrary}
	\Fix (P_{C_1} \circ P_{C_2}) \cap \Omega = C_1 \cap C_2 .
	\end{equation*}
\end{theorem}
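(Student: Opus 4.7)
The plan is to prove the two inclusions separately, with the forward inclusion $C_1 \cap C_2 \subseteq \Fix(P_{C_1}\circ P_{C_2}) \cap \Omega$ being immediate: any $w \in C_1 \cap C_2$ is fixed by both projectors, and $C_2 \subseteq \Re^n_+ \times \Re^n_+ \subseteq \Omega$ handles membership in $\Omega$. The substance of the theorem is the reverse inclusion, for which I would first dispose of the trivial case $c \notin \Ran(T)$: then $C_1 = \emptyset$ and both sides are empty. So we may assume $c \in \Ran(T)$ and invoke Proposition~\ref{prop:P_C1}.

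Let $w \in \Fix(P_{C_1}\circ P_{C_2}) \cap \Omega$. By definition there exists some $w' \in P_{C_2}(w)$ with $w = P_{C_1}(w')$, so automatically $w \in C_1$ and $w' \in C_2$. The goal is to show $w = w'$, which will give $w \in C_2$ and close the argument. Using the formula of Proposition~\ref{prop:P_C1}, write
\[
w' - w \;=\; T^{\dagger}(Tw' - \sqrt{2}\,c).
\]
Since $\Ran(T^{\dagger}) = \Ran(T^{\T}) = \Ker(T)^{\perp}$, this immediately gives $w' - w \in \Ker(T)^{\perp}$. By the hypothesis \eqref{eqn:linearlyregularintersection}, it suffices to show $w' - w \in \hat{C}_2$, that is, $(u'_i - u_i)(v'_i - v_i) = 0$ for each $i$.

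This last step is the only real work, and it will be a direct case analysis on each coordinate pair $(u_i, v_i)$ using the formula for $P_{C_2}$ from Proposition~\ref{prop:P_C2}. When $u_i < v_i$, we have $(u'_i, v'_i) = (0, (v_i)_+)$; the membership $w \in \Omega$ rules out $v_i < 0$ (which would force $(u_i, v_i) \in \Re^2_{--}$), so $v'_i = v_i$ and the product vanishes. The case $u_i > v_i$ is symmetric. When $u_i = v_i$, the condition $w \in \Omega$ forces $u_i = v_i \geq 0$, and either selection $(0, v_i)$ or $(u_i, 0)$ kills one of the two differences. In every case permitted by $\Omega$, one factor of $(u'_i - u_i)(v'_i - v_i)$ is zero. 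The main (and only) obstacle is book-keeping this case split correctly so that $\Omega$ is used exactly where needed to exclude the $\Re^2_{--}$ quadrant; once this is done, the hypothesis $\Ker(T)^{\perp}\cap \hat{C}_2 = \{0\}$ yields $w = w'\in C_2$, completing the proof.
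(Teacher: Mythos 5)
Your proposal is correct and follows essentially the same route as the paper: both reduce the reverse inclusion to showing $w-w'\in \Ker(T)^{\perp}\cap\hat{C}_2=\{0\}$, with the $\Ker(T)^{\perp}$ membership coming from the affine projection onto $C_1$ and the $\hat{C}_2$ membership from a coordinatewise case analysis of $P_{C_2}$ in which $\Omega$ excludes the strictly negative quadrant (the paper organizes this via the index sets $I,J,K$, you via the same three cases $u_i<v_i$, $u_i>v_i$, $u_i=v_i$). The only cosmetic difference is that you invoke the explicit Moore--Penrose formula of Proposition~\ref{prop:P_C1} where the paper simply uses that $C_1$ is affine.
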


\begin{proof}
	We note first that if $c\notin \Ran (T)$, then $C_1 = \emptyset$. Since $\Fix (P_{C_1}\circ P_{C_2}) \subseteq C_1$, then $\Fix (P_{C_1}\circ P_{C_2}) = \emptyset$. Hence, the result necessarily holds. Suppose now that $c\in \Ran (T)$ so that $C_1 \neq \emptyset$. Since $C_1\cap C_2 \subseteq \Fix (P_{C_1}\circ P_{C_2})$ and $C_2 \subseteq \Omega$, then $C_1\cap C_2 \subseteq \Fix (P_{C_1}\circ P_{C_2}) \cap \Omega$. To prove the other inclusion, suppose that $w=(u,v) \in \Fix (P_{C_1}\circ P_{C_2}) \cap \Omega$. Since $w \in (P_{C_1}\circ P_{C_2})(w)$, then $w=P_{C_1}(w')$ for some $w'\in P_{C_2}(w)$. Since $C_1$ is an affine set, it follows that $w-w' \in \Ker(T)^{\perp}$. 
	
	We also have that $w\in \Omega$ so that we may partition its components using the following index sets:
	\begin{eqnarray}
	I& :=&  \{ i\in \{1,2,\dots ,n\}: u_i > v_i ~\text{and}~u_i\geq 0\} \notag \\ 
	J& :=&  \{ i\in \{1,2,\dots ,n\}: u_i = v_i \geq 0 \} \notag \\ 
	K& :=&  \{ i\in \{1,2,\dots ,n\}: u_i < v_i ~\text{and}~v_i\geq 0\} \notag 
	\end{eqnarray}
	By rearranging the columns of $A$ and $B$ if necessary, we may suppose that $u=(u_I,u_J,u_K) \in \Re^n$ where $u_{\Lambda}$ denotes the components of $u$ indexed by $\Lambda  \in \{ I, J, K\}$. Accordingly, we let $v=(v_I,v_J,v_K)\in\Re^n$. Consequently, we have from Proposition~\ref{prop:P_C2} that $w' = (u',v')$ where $u'=(u_I, u_J', 0_{|K|})$ and $v'=(0_{|I|}, v_J', v_K)$ with $(u_j',v_j')\in \{(u_j,0), (0,v_j)\}$ . Then $(w-w')_i(w-w')_{n+i}=0$ for all $i=1,\dots, n$, that is, we have $w-w' \in \hat{C}_2$.
	
	To summarize, we have shown that $w-w'\in \Ker(T)^{\perp}\cap \hat{C}_2$. By \eqref{eqn:linearlyregularintersection}, it follows that $w=w'$. Hence, $w\in C_2$. This completes the proof.  
\end{proof}

\medskip
The condition \eqref{eqn:linearlyregularintersection} is not easy to verify for the case $m\neq n$. In the next subsection where we discuss the case $m=n$, we use the notion of nondegenerate matrices to provide an easier-to-verify condition on $A$ and $B$ that will result to a map $T$ that satisfies \eqref{eqn:linearlyregularintersection}.
\medskip 

Moreover, we also note the following observation: From Proposition~\ref{prop:w^knotinR2-}, we see that $\Fix (P_{C_1}\circ P_{C_2}) \subseteq (\Re^n \times \Re^n) \setminus (\Re^n_{--} \times \Re^n_{--})$ and note that 
\[\Omega \subseteq (\Re^n \times \Re^n) \setminus (\Re^n_{--} \times \Re^n_{--}) .\]
Hence, it is not necessary that $\Fix (P_{C_1}\circ P_{C_2}) \subseteq \Omega$. In Example~\ref{example:fixedpoints}, we illustrate the importance of intersecting the set of fixed points with the set $\Omega$. For the case $m=n$, we also provide a sufficient condition so that the set of fixed points is necessarily contained in $\Omega$ (see Theorem~\ref{theorem:fixedpoints_Pmatrix}), which in turn implies that $\Fix (P_{C_1}\circ P_{C_2})$ is equal to $C_1\cap C_2$.
\medskip

\subsection{Characterization of fixed points for $m=n$}\label{subsec:fixedpoints_m=n}
To prove our main result for the case $m=n$, we first establish some key lemmas which are prerequisites to obtaining conditions on $A$ and $B$ that will imply \eqref{eqn:linearlyregularintersection}. 

\medskip 
\begin{lemma}\label{lemma:pseudoinverse_property}
	Let $T\in \Re^{m\times 2n}$ be given by \eqref{eqn:T_map}. Suppose that at least one among the matrices $A$, $B$, $A+B$ and $A-B$ is of full row rank. Then $\rank (T)=m$. In particular, $\rank (T)=n$ when $m=n$ and $B=-I_n$.
\end{lemma}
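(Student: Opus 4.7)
The plan is to analyze $\rank(T)$ through its left null space: showing $\rank(T)=m$ is equivalent to showing that the only $y \in \Re^m$ satisfying $y^{\T} T = 0$ is $y = 0$. Unpacking the block structure of $T = \begin{bmatrix} A+B & -A+B \end{bmatrix}$, the condition $y^{\T} T = 0$ splits into
\begin{equation*}
    y^{\T}(A+B) = 0 \qquad \text{and} \qquad y^{\T}(-A+B) = 0.
\end{equation*}
Adding and subtracting these two relations yields the key equivalence
\begin{equation*}
    y^{\T} T = 0 \quad \Longleftrightarrow \quad y^{\T} A = 0 \text{ and } y^{\T} B = 0.
\end{equation*}

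With this equivalence in hand, the four hypotheses are handled uniformly. If $A$ (respectively $B$) has full row rank, then $y^{\T} A = 0$ (respectively $y^{\T} B = 0$) already forces $y = 0$. If $A+B$ has full row rank, then from the original splitting we directly get $y^{\T}(A+B) = 0$, forcing $y = 0$; analogously if $A-B$ has full row rank, since $y^{\T}(-A+B) = 0$ is the same as $y^{\T}(A-B)=0$. In every case the left null space of $T$ is trivial, so $\rank(T) = m$.

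For the particular case $m = n$ and $B = -I_n$, the matrix $B$ is nonsingular and hence of full row rank $n$, so the general statement specializes to $\rank(T) = n$. I do not anticipate any serious obstacle here; the whole argument is a short linear-algebra calculation, and the only subtle point worth flagging is the equivalence between $y^{\T} T = 0$ and the pair $(y^{\T}A,\, y^{\T}B) = (0,0)$, which is what makes all four hypotheses work through the same reduction.
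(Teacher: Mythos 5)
Your proof is correct, and it takes a slightly different route from the paper's. The paper splits into two cases: when $A+B$ or $A-B$ has full row rank the claim is immediate since those blocks are literally (up to sign of columns) submatrices of $T$, and when $A$ or $B$ has full row rank it uses the Gram-matrix identity $\rank(T)=\rank(TT^{\T})=\rank\bigl(2(AA^{\T}+BB^{\T})\bigr)$ together with the fact that a sum of two positive semidefinite matrices, one of which is positive definite, is positive definite. You instead work with the left null space, observing that $y^{\T}T=0$ is equivalent to $y^{\T}A=0$ and $y^{\T}B=0$, which lets you dispatch all four hypotheses by one uniform, purely elementary argument with no case split and no appeal to definiteness. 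What the paper's route buys is the explicit formula $TT^{\T}=2(AA^{\T}+BB^{\T})$, which is reused later in the implementation (Section~\ref{sec:implementation}) to set up the Cholesky-based solve for $P_{C_1}$; your route buys brevity and uniformity. Both are valid short linear-algebra proofs of the same statement.
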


\begin{proof}
	If either $A+B$ or $A-B$ has rank $m$, then it is clear that $\rank (T)=m$. For the other cases when either $A$ or $B$ is of full row rank, we note that $\rank (T)=\rank (TT^\T)=\rank (2(AA^\T+BB^\T))$. The matrices $AA^\T$ and $BB^\T$  are positive semidefinite, and at least one of them is positive definite by our full rank assumption. Hence, the claim of the lemma follows.
\end{proof}

\medskip 
The next lemma precisely describes the elements of the set $\Ker (T)^{\perp}$. 

\medskip 
\begin{lemma}\label{lemma:property_Q}
	Suppose $m=n$ and $T\in \Re^{n\times 2n}$ is given by \eqref{eqn:T_map}, and suppose that $A-B$ is nonsingular. Then $\Ker (T)^{\perp} = \Ker(\begin{bmatrix}
	I_n & Q
	\end{bmatrix})$ where 	
	
	\begin{equation}\label{eqn:Qmatrix}
	Q:=(A^\T+B^\T)(A^\T-B^\T)^{-1}
	\end{equation}
\end{lemma}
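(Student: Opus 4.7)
The plan is to prove the identity by computing $\Ker(T)^\perp = \Ran(T^\T)$ explicitly and showing its parametric description collapses exactly to the single linear relation $u+Qv=0$ once the nonsingularity of $A-B$ is used to eliminate the parameter.

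First I would write
\[
T^{\T} = \begin{bmatrix} (A+B)^{\T} \\ (-A+B)^{\T} \end{bmatrix} = \begin{bmatrix} A^{\T}+B^{\T} \\ -(A^{\T}-B^{\T}) \end{bmatrix},
\]
so that a vector $w=(u,v)\in \Re^n\times\Re^n$ lies in $\Ker(T)^{\perp}=\Ran(T^{\T})$ if and only if there exists $y\in\Re^n$ with
\[
u = (A^{\T}+B^{\T})\,y \qquad\text{and}\qquad v = -(A^{\T}-B^{\T})\,y.
\]

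Next I would use the assumption that $A-B$ is nonsingular, hence $A^{\T}-B^{\T}$ is nonsingular. The second equation then determines the parameter uniquely as $y = -(A^{\T}-B^{\T})^{-1} v$. Substituting into the first equation yields
\[
u = -(A^{\T}+B^{\T})(A^{\T}-B^{\T})^{-1} v = -Q v,
\]
i.e.\ $u+Qv = 0$, which is precisely the statement $w\in \Ker(\begin{bmatrix} I_n & Q\end{bmatrix})$. Conversely, given any $w=(u,v)$ with $u+Qv=0$, setting $y:=-(A^{\T}-B^{\T})^{-1}v$ produces the required $y$ with $T^{\T}y=(u,v)$, so $w\in \Ran(T^{\T})=\Ker(T)^{\perp}$.

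The two inclusions give the claimed equality. As a sanity check on dimensions, Lemma~\ref{lemma:pseudoinverse_property} applies (since the nonsingular matrix $A-B$ is of full row rank), so $\rank(T)=n$ and hence $\dim \Ker(T)^{\perp}=n$; on the other side $[I_n\ Q]$ has rank $n$, so $\dim \Ker([I_n\ Q])=2n-n=n$, consistent with the equality. I do not expect any real obstacle: the argument is essentially bookkeeping, with the only substantive ingredient being the invertibility of $A^{\T}-B^{\T}$, which lets us eliminate the free parameter $y$ and arrive at an intrinsic description of $\Ran(T^{\T})$.
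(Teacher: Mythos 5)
Your proof is correct, and it differs from the paper's in how the reverse inclusion is closed. The forward direction is identical: both you and the paper parametrize $\Ker(T)^{\perp}=\Ran(T^{\T})$, read off $u=(A^{\T}+B^{\T})y$ and $v=-(A^{\T}-B^{\T})y$, and eliminate the parameter using the invertibility of $A^{\T}-B^{\T}$ to get $u=-Qv$. For the converse, the paper does not construct anything: it counts dimensions, using the rank-nullity theorem for $\Ker(\begin{bmatrix} I_n & Q \end{bmatrix})$ and Lemma~\ref{lemma:pseudoinverse_property} to get $\rank(T)=n$, and concludes equality from the inclusion plus equal dimensions. You instead exhibit the preimage explicitly, setting $y=-(A^{\T}-B^{\T})^{-1}v$ and verifying $T^{\T}y=(u,v)$, which makes the argument self-contained (the rank lemma becomes, as you note, only a sanity check rather than a needed ingredient). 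Both routes are sound; yours is slightly more elementary and constructive, while the paper's dimension count is shorter on the page because it reuses a lemma already established for other purposes.
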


\begin{proof}
	Let $w=(u,v)\in \Ker(T)^{\perp}= \Ran (T^{\T})$. Then there exists $x\in \Re^n$ such that $T^{\T}x = w$. It follows that $u=(A^{\T}+B^{\T})x$ and $v=-(A^{\T}-B^{\T})x$. By invertibility of $A-B$, we see that 
	\[u = (A^{\T}+B^{\T})x = -(A^{\T}+B^{\T})(A^{\T}-B^{\T})^{-1}v = -Qv.\]
	Hence, we have $\Ker (T)^{\perp} \subseteq  \Ker(\begin{bmatrix}
	I_n & Q
	\end{bmatrix})$. Meanwhile, we also have that $$\dim (\Ker(\begin{bmatrix}
	I_n & Q
	\end{bmatrix})) = 2n - \rank (\begin{bmatrix}
	I_n & Q
	\end{bmatrix}) = n $$ by the rank-nullity theorem, and $$\dim (\Ker (T)^{\perp}) = \rank (T^{\T}) = \rank (T) = n,$$ by Lemma~\ref{lemma:pseudoinverse_property}. Thus, $$\dim (\Ker(\begin{bmatrix}
	I_n & Q
	\end{bmatrix})) = \dim (\Ker (T)^{\perp}) .$$ With these, we conclude that $\Ker (T)^{\perp} = \Ker(\begin{bmatrix}
	I_n & Q
	\end{bmatrix})$. 
\end{proof}

\medskip 
Note that we can derive a result similar to Lemma~\ref{lemma:property_Q} if we rather assume that $A+B$ is nonsingular. 

\medskip 
Now that we have described the set $\Ker (T)^{\perp}$, we next focus on finding conditions which will imply \eqref{eqn:linearlyregularintersection}. Nondegenerate matrices, as defined below, will play a major role in our analysis. 

\begin{definition}\cite{CPS92}\label{defn:nondegenerate}
	A matrix $Q\in \Re^{n\times n}$ is nondegenerate if all its principal minors are nonzero, i.e. the principal submatrix $Q_{\Lambda \Lambda}$ is nonsingular for all $\Lambda \subseteq \{1,\dots,n\}$. We call $Q$ degenerate if it is not a nondegenerate matrix. 
\end{definition}

\medskip 

\begin{lemma}\label{lemma:switchingcolumns_nonsingular}
	Let $A,Q\in \Re^{n\times n}$ be nonsingular matrices where $Q$ is a nondegenerate matrix, and let $B:= AQ$. Let $\Lambda\subseteq \{ 1,\dots, n\}$ and let $A'$ be the $n\times n$ matrix obtained by replacing the columns of $A$ indexed by $\Lambda$ by those columns of $B$ indexed by $\Lambda$. Then $A'$ is nonsingular.
\end{lemma}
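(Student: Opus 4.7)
The plan is to factor $A'$ as $A$ times a structured matrix whose determinant reduces to a principal minor of $Q$, and then invoke nondegeneracy of $Q$.

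First I would write the columns of $A'$ in a unified way. Let $e_1,\dots,e_n$ denote the standard basis of $\Re^n$ and let $q_j := Qe_j$, so that the $j$-th column of $B=AQ$ equals $Aq_j$ while the $j$-th column of $A$ equals $Ae_j$. By construction the $j$-th column of $A'$ is $Ae_j$ for $j\notin\Lambda$ and $Aq_j$ for $j\in\Lambda$. Hence $A' = A M$, where $M\in\Re^{n\times n}$ is the matrix whose $j$-th column is $e_j$ if $j\notin\Lambda$ and $q_j$ if $j\in\Lambda$. Since $A$ is nonsingular by hypothesis, showing $A'$ is nonsingular is equivalent to showing $\det M\neq 0$.

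Next I would compute $\det M$ by reordering columns and rows so the block structure becomes transparent. Let $P$ be the permutation matrix that moves the columns indexed by $\Lambda$ to the first $|\Lambda|$ positions (applied on the right) and a similar row permutation $P'$ (applied on the left) that reorders the rows in the same way. A direct inspection, using the block partition of $Q$ induced by the splitting $\{1,\dots,n\} = \Lambda \cup \bar\Lambda$, yields
\begin{equation*}
P' M P = \begin{bmatrix} Q_{\Lambda\Lambda} & 0 \\ Q_{\bar\Lambda\Lambda} & I_{|\bar\Lambda|} \end{bmatrix},
\end{equation*}
because the columns indexed by $\bar\Lambda$ are standard basis vectors whose nonzero entries lie in the $\bar\Lambda$-rows, so after the joint row/column permutation they form the identity block in the lower-right, while the columns indexed by $\Lambda$ are $q_j = Qe_j$ whose $\Lambda$-row entries form $Q_{\Lambda\Lambda}$ and whose $\bar\Lambda$-row entries form $Q_{\bar\Lambda\Lambda}$.

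The determinant of this block lower triangular matrix is $\det(Q_{\Lambda\Lambda}) \cdot \det(I_{|\bar\Lambda|}) = \det(Q_{\Lambda\Lambda})$. Since $Q$ is nondegenerate (Definition~\ref{defn:nondegenerate}), every principal minor of $Q$ is nonzero, so $\det(Q_{\Lambda\Lambda})\neq 0$. Therefore $\det M = \pm\det(Q_{\Lambda\Lambda}) \neq 0$, and consequently $\det A' = \det A\cdot\det M \neq 0$, which gives nonsingularity of $A'$. There is no real mathematical obstacle here; the only delicate step is bookkeeping the row/column reorderings to exhibit the block triangular form correctly, but once that is done the result is immediate from the definition of nondegeneracy.
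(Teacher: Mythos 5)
Your proof is correct and rests on the same key fact as the paper's: after writing each replaced column of $A'$ as $A$ applied to the corresponding column of $Q$, everything reduces to the nonsingularity of the principal submatrix $Q_{\Lambda\Lambda}$. The paper phrases this as a direct linear-independence argument (a vanishing combination of the columns of $A'$ forces $Q_{\Lambda\Lambda}a_{\Lambda}=0$, hence all coefficients vanish), which is precisely solving $Mz=0$ for your factor $M$; your factorization $A'=AM$ with $\det M=\pm\det Q_{\Lambda\Lambda}\neq 0$ is an equivalent and equally valid packaging of the same idea.
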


\begin{proof}
	Without loss of generality, assume that $\Lambda = \{ 1,2,\dots, k\}$ with $k\leq n$. Denote the columns of $A$ by $\{ v_1,v_2,\dots, v_n\}$ and the columns of $B$ by $\{ \bar{v}_1,\bar{v}_2,\dots, \bar{v}_n\}$. To prove the claim, we only need to show that $\{ \bar{v}_1,\bar{v}_2, \dots, \bar{v}_k, v_{k+1},\dots,v_n\}$ is linearly independent. 
	
	\medskip 
	
	Suppose that $\ds \sum _{j=1}^{k}a_j \bar{v}_j + \sum _{j=k+1}^n a_j v_j=0$ for some constants $a_1,\dots, a_n$. By the definition of $B$, we have $\ds \bar{v}_j = \sum _{i=1}^n q_{ij}v_i$ for all $j$, where $q_{ij}$ is the $(i,j)$-entry of $Q$. Direct computations lead us to
	\begin{eqnarray*}
		& & \left(\sum _{j=1}^k a_j q_{1j}\right) v_1 + \cdots + \left(\sum _{j=1}^k a_j q_{kj}\right) v_k \notag  \\ 
		& & + \left( a_{k+1}+\sum _{j=1}^k a_j q_{k+1,j} \right) v_{k+1} + \cdots +  \left( a_{n}+\sum _{j=1}^k a_j q_{nj} \right) v_{n} = 0. \label{eqn:lin_indep}
	\end{eqnarray*} 
	Since the $v_i$'s are linearly independent, all the coefficients above should be equal to zero. From the first $k$ terms, we obtain that $Q_{\Lambda \Lambda}(a_1,\dots, a_k)^\T=0$. Since $Q_{\Lambda \Lambda }$ is nonsingular by nondegeneracy of $Q$, then $a_j=0$ for all $j=1,\dots, k$ which consequently gives $a_j=0$ for all $j>k$. 
\end{proof}

\begin{proposition}\label{lemma:Pnondegenerate_relationwithD}
	Let $m=n$ and suppose that the matrix $Q$ defined by \eqref{eqn:Qmatrix} is nondegenerate. Let $\Lambda_1\subseteq \{1,\dots, n\}$ and $\Lambda_2=\{ n+i : i \notin \Lambda_1 \}$. Then the columns of $[I_n~~Q]$ indexed by $\Lambda_1\cup \Lambda_2$ are linearly independent. Consequently, condition \eqref{eqn:linearlyregularintersection} holds. 
\end{proposition}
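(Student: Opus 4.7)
The proposition has two parts: first, linear independence of a certain $n$-column sub-matrix of $[I_n~~Q]$; second, the deduction that $\Ker(T)^\perp \cap \hat{C}_2 = \{0\}$. I would handle them in that order, since the second part is essentially a translation of the complementarity constraint $u_i v_i = 0$ into a column-selection pattern that matches the first part.

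\medskip

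For the first part, I note that the column of $[I_n~~Q]$ with index $i \in \Lambda_1 \subseteq \{1,\dots,n\}$ is the standard basis vector $e_i$, while the column with index $n+i \in \Lambda_2$ is precisely the $i$-th column of $Q$, for $i \notin \Lambda_1$. Thus the sub-matrix formed by the columns indexed by $\Lambda_1 \cup \Lambda_2$ is exactly the matrix obtained from $I_n$ by replacing those columns indexed by $\Lambda_1^c = \{1,\dots,n\} \setminus \Lambda_1$ with the corresponding columns of $Q$. Since $Q$ is nondegenerate, in particular $\det(Q) \neq 0$ (taking $\Lambda = \{1,\dots,n\}$ in Definition~\ref{defn:nondegenerate}), so $Q$ is nonsingular. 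I can then apply Lemma~\ref{lemma:switchingcolumns_nonsingular} with the choices $A = I_n$, $B = I_n \cdot Q = Q$, and $\Lambda = \Lambda_1^c$ to conclude that the resulting sub-matrix is nonsingular. Linear independence of the selected columns follows immediately.

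\medskip

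For the second part, I would use Lemma~\ref{lemma:property_Q} to rewrite the condition \eqref{eqn:linearlyregularintersection} as $\Ker([I_n~~Q]) \cap \hat{C}_2 = \{0\}$. Given any $w = (u,v)$ in this intersection, the equation $u + Qv = 0$ expands to
\begin{equation*}
\sum_{i=1}^n u_i e_i + \sum_{j=1}^n v_j Q_{\cdot,j} = 0.
\end{equation*}
Now I would define $\Lambda_1 := \{ i : v_i = 0\}$ and $\Lambda_1^c = \{i : v_i \neq 0\}$. The membership $w \in \hat{C}_2$ forces $u_j v_j = 0$ for each $j$; in particular, $u_j = 0$ whenever $j \in \Lambda_1^c$. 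Consequently, the $j \in \Lambda_1^c$ summands from the first sum and the $j \in \Lambda_1$ summands from the second sum all vanish, reducing the identity to
\begin{equation*}
\sum_{i \in \Lambda_1} u_i e_i + \sum_{j \in \Lambda_1^c} v_j Q_{\cdot,j} = 0.
\end{equation*}
Setting $\Lambda_2 := \{ n+i : i \in \Lambda_1^c\}$ and invoking the first part of the proposition, these columns are linearly independent, so $u_i = 0$ for $i \in \Lambda_1$ and $v_j = 0$ for $j \in \Lambda_1^c$. Combined with the definitions of $\Lambda_1$ and $\Lambda_1^c$, this gives $u = 0$ and $v = 0$, i.e., $w = 0$.

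\medskip

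The main conceptual step—and the only point where care is needed—is matching the complementarity pattern of $\hat{C}_2$ to the column-selection pattern of the first part, so that the nondegeneracy of $Q$ via Lemma~\ref{lemma:switchingcolumns_nonsingular} can be invoked. Everything else is routine linear algebra.
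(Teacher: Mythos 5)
Your proposal is correct and follows essentially the same route as the paper: the first claim via Lemma~\ref{lemma:switchingcolumns_nonsingular} with $A=I_n$, $B=Q$ and $\Lambda=\{1,\dots,n\}\setminus\Lambda_1$, and the second via Lemma~\ref{lemma:property_Q} by reading $u+Qv=0$ as a vanishing linear combination of columns whose index pattern, thanks to the complementarity $u_iv_i=0$, fits the first claim. Your bookkeeping with $\Lambda_1=\{i:v_i=0\}$ is a slightly tidier variant of the paper's choice of support sets, but the argument is the same.
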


\begin{proof}
	Set$A=I_n$ and $\Lambda = \{ 1,\dots , n \}\setminus \Lambda_1$. Then the columns of matrix $A'$ described in Lemma~\ref{lemma:switchingcolumns_nonsingular} are precisely the columns of $D:=[I_n~~Q]$ indexed by $\Lambda_1 \cup \Lambda_2$. Consequently, $A'$ is nonsingular and so the first claim of the proposition holds. 
	
	If $w=(u,v)\in \Ker (T)^{\perp} \cap \hat{C}_2$, then since $\Ker (T)^{\perp}  = \Ker (D)$ by Lemma~\ref{lemma:property_Q}, we have 
	\begin{equation}\label{eqn:lincombi}
	0 = Dw = \sum_{i\in \Lambda_1} u_i d_i + \sum_{i\in \Lambda_2'} v_i d_i
	\end{equation}
	where $d_i\in \Re^n$ is the $i$th column of $D$, $\Lambda_1 := \{i:u_i\neq 0 \}$ and $\Lambda_2' := \{i:v_i\neq 0\}$. In other words, the right-hand side of \eqref{eqn:lincombi} is a linear combination of the columns of $D$ indexed by $$\Lambda:= \Lambda_1 \cup \{n+i:i\in \Lambda_2'\} \subseteq \Lambda_1 \cup \Lambda_2.$$
	Thus, the columns indexed by $\Lambda$ must be linearly independent, i.e. $\Lambda_1 = \Lambda_2' = \emptyset$ so that $w=0$. 
\end{proof}
\medskip 

As an immediate consequence of the above result and Theorem~\ref{theorem:fixedpoints_arbitrary}, we have the following. 

\begin{theorem}[Characterization of fixed point sets for $m=n$]\label{theorem:fixedpoints} 
	Let $m=n$. Suppose that $Q$ given by \eqref{eqn:Qmatrix} is a nondegenerate matrix and $\Omega$ is as defined in Theorem~\ref{theorem:fixedpoints_arbitrary}. Then for any $c \in \Re^n$,
	\begin{equation*}\label{eqn:fixedpoints=C1capC2}
	\Fix (P_{C_1} \circ P_{C_2}) \cap \Omega = C_1 \cap C_2.
	\end{equation*}
\end{theorem}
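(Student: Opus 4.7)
The plan is to derive the theorem as an immediate corollary of the two main results already established in this subsection, namely Proposition~\ref{lemma:Pnondegenerate_relationwithD} and Theorem~\ref{theorem:fixedpoints_arbitrary}. The strategy is to use the nondegeneracy of $Q$ to verify the linear regularity condition \eqref{eqn:linearlyregularintersection}, which then lets me invoke the general characterization from Theorem~\ref{theorem:fixedpoints_arbitrary}.

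First I would observe that the well-definedness of $Q=(A^{\T}+B^{\T})(A^{\T}-B^{\T})^{-1}$ implicitly requires $A-B$ to be nonsingular. Since $m=n$, this means $A-B$ has full row rank, so Lemma~\ref{lemma:pseudoinverse_property} gives $\rank(T)=n$, and Lemma~\ref{lemma:property_Q} applies to yield $\Ker(T)^{\perp} = \Ker([I_n~~Q])$. Then the nondegeneracy hypothesis on $Q$ is precisely what Proposition~\ref{lemma:Pnondegenerate_relationwithD} needs to conclude $\Ker(T)^{\perp}\cap\hat{C}_2 = \{0\}$, which is exactly condition \eqref{eqn:linearlyregularintersection} required by Theorem~\ref{theorem:fixedpoints_arbitrary}.

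With the hypotheses of Theorem~\ref{theorem:fixedpoints_arbitrary} verified, its conclusion $\Fix(P_{C_1}\circ P_{C_2})\cap\Omega = C_1\cap C_2$ follows for any $c\in\Re^n$, which is the desired equality. Since this proof is a direct bookkeeping exercise, there is no real obstacle; the work has all been done in Proposition~\ref{lemma:Pnondegenerate_relationwithD} (the combinatorial step of showing linear independence of the columns of $[I_n~~Q]$ indexed by $\Lambda_1\cup\Lambda_2$, which is where the nondegeneracy of $Q$ is actually used via Lemma~\ref{lemma:switchingcolumns_nonsingular}) and in the general characterization Theorem~\ref{theorem:fixedpoints_arbitrary} (where the partition of $w$ via the index sets $I$, $J$, $K$ shows that any fixed point lying in $\Omega$ satisfies $w-w'\in\hat{C}_2$).
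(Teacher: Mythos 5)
Your proposal is correct and follows exactly the paper's own argument: nondegeneracy of $Q$ gives condition \eqref{eqn:linearlyregularintersection} via Proposition~\ref{lemma:Pnondegenerate_relationwithD} (with Lemmas~\ref{lemma:pseudoinverse_property} and~\ref{lemma:property_Q} underlying it), and the conclusion then follows from Theorem~\ref{theorem:fixedpoints_arbitrary}. Nothing is missing.
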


\begin{proof}
	Since $Q$ is nondegenerate, we have from Proposition~\ref{lemma:Pnondegenerate_relationwithD} that condition \eqref{eqn:linearlyregularintersection} holds. Hence, the claim follows from Theorem~\ref{theorem:fixedpoints_arbitrary}. 
\end{proof}

\medskip 

\begin{remark}\label{remark:howtogetnondegeneracy} One can guarantee that the matrix $Q$ given by \eqref{eqn:Qmatrix} is nondegenerate if $\sigma_{\min} (A) > \sigma_{\max}(B)$, i.e. the smallest singular value of $A$ is greater than the largest singular value of $B$. To see this, note that for all $x\in \Re^n$, 
	\begin{eqnarray}
	x^\T (A^\T+B^\T)(A^\T-B^\T)^{-1}x & = & y^\T (A-B)(A^\T + B^\T)y , ~ \text{where}~y=(A^\T - B^\T)^{-1}x \notag \\
	& = &   y^\T(AA^\T - BA^\T + AB^\T - BB^\T)y \notag \\
	& = & y^\T (AA^\T-BB^\T)y \notag  \\
	& \geq & (\lambda _{\min}(AA^\T) - \lambda_{\max}(BB^\T)) \|y\|^2 \notag \\
	& = & (\sigma_{\min}(A) - \sigma_{\max}(B))\|y\|^2 , \notag 
	\end{eqnarray}
	where the third equality follows from $y^\T BA^\T y = y^\T A B^\T y$. It follows that $Q$ is a positive definite matrix, which is necessarily nondegenerate. By a similar computation, the condition $\sigma_{\max}(A) < \sigma_{\min}(B)$ implies nondegeneracy of $Q$. $_{\blacksquare }$ 
\end{remark}

\medskip 
The following example demonstrates the importance of nondegeneracy of $Q$ as well as the significance of intersecting the set of fixed points with $\Omega$.
\begin{example}\label{example:fixedpoints} \text{} 
	\begin{enumerate}
		\item Let $A= \left[ \begin{array}{rr}
		1 & 2 \\ 3 & 4 
		\end{array} \right]$, $B=-I_2$ and $c=(-10,-19)/\sqrt{2}$. Then $Q = \left[ \begin{array}{rr}
		-1.5 & 1.5 \\ 1 & 0 
		\end{array} \right]$, which is a degenerate matrix. Moreover, it can be verified that $w = (-0.9231, 4.7026,$ $9.0872, 0.6154) \in \Fix (P_{C_1} \circ P_{C_2}) \cap \Omega $. Clearly, however, $w\notin C_1 \cap C_2$. We note that the problem is feasible, i.e. $C_1\cap C_2 \neq \emptyset$. For instance, both $(0,0,3,2)/\sqrt{2}$ and $(2 ,0 ,0, 5)/\sqrt{2}$ are solutions of the feasibility problem.
		\item Let $A=1/2$, $B=3/2$ and $c=-\sqrt{2}$. Then $Q=-2$, which is nondegenerate. Moreover, $C_1\cap C_2 = \emptyset$ and $\Fix (P_{C_1} \circ P_{C_2})  = \{ (-0.8,-0.4)\}$ so that $C_1\cap C_2 \neq \Fix (P_{C_1} \circ P_{C_2})$. Nevertheless, we see that $\Fix (P_{C_1} \circ P_{C_2}) \cap \Omega = C_1 \cap C_2 $. 
	\end{enumerate}
\end{example}

\medskip 
It turns out that the signs of the principal minors of $Q$ play an important role in characterizing the fixed points of the alternating projections map. In particular, the fixed points are necessarily contained in the set $\Omega$  defined in Theorem~\ref{theorem:fixedpoints} if all the principal minors of $Q$ are positive. Such a matrix is called a $P$-matrix~\cite{CPS92}. In contrast, we see from Example~\ref{example:fixedpoints}.2 that intersecting the set of fixed points with $\Omega$ is necessary if there exists a negative principal minor. 

\medskip 
To characterize the set of fixed points for the $P$-matrix case, we need the following lemma.

\medskip 
\begin{lemma}\cite[Theorem~3.3.4]{CPS92}\label{lemma:Pmatrix}
	$Q\in \Re^{n\times n}$ is a $P$-matrix (i.e., all of its principal minors are positive) if and only if whenever $x_i(Qx)_i\leq 0$ for all $i=1,\dots, n$, we have $x=0$.
\end{lemma}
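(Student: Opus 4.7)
The plan is to prove both implications of the biconditional. Since this is the classical characterization of $P$-matrices attributed to Fiedler--Pt{\'a}k (as recorded in \cite{CPS92}), the workhorse ideas are a diagonal similarity trick, restriction to support, and induction via the Schur complement.

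For the forward direction ($P$-matrix implies sign-reversal), I would suppose $x_i(Qx)_i\leq 0$ for every $i$ and deduce $x=0$ in three reductions. First, I reduce to the case $x\geq 0$ by a diagonal similarity: let $D=\diag(\sgn(x_i))$ with $\sgn(0)$ chosen arbitrarily in $\{\pm 1\}$. Then $D^2=I$, so every principal submatrix of $\tilde Q:=DQD$ has the form $D_{\Lambda}Q_{\Lambda\Lambda}D_{\Lambda}$, whose determinant equals $(\det D_\Lambda)^2\det Q_{\Lambda\Lambda}=\det Q_{\Lambda\Lambda}$. Hence $\tilde Q$ is again a $P$-matrix, and $\tilde x:=Dx\geq 0$ satisfies $\tilde x_i(\tilde Q\tilde x)_i=x_i(Qx)_i\leq 0$. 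Second, I restrict to the support $\alpha:=\{i:\tilde x_i>0\}$: the principal submatrix $Q_{\alpha\alpha}$ inherits the $P$-matrix property, and $\tilde x_\alpha>0$ with $Q_{\alpha\alpha}\tilde x_\alpha\leq 0$. So it suffices to prove: no $P$-matrix $M$ admits $y>0$ with $My\leq 0$.

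For this last assertion I would argue by induction on the dimension $n$. The case $n=1$ is immediate from $m_{11}>0$. For the inductive step, I partition $M=\bigl[\begin{smallmatrix}A&b\\ c^{\T}&d\end{smallmatrix}\bigr]$ with $d>0$ and form the Schur complement $S:=A-d^{-1}bc^{\T}$, which is an $(n-1)\times(n-1)$ $P$-matrix by the classical Schur-complement characterization. Writing $y=(y',y_n)$ with $y_n>0$, the relation $c^{\T}y'+dy_n\leq 0$ lets me substitute for $y_n$ in the first $n-1$ equations to obtain an identity of the form $Sy'=(My)_{1:n-1}-d^{-1}b\,(My)_n$. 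A careful sign analysis of the right-hand side (leveraging $(My)_n\leq 0$, $d>0$, and the inductive $P$-property of $S$) yields a contradiction with $y'>0$. An alternative, shorter route is to invoke the classical equivalence ``$M$ is a $P$-matrix iff $\mathrm{LCP}(q,M)$ has a unique solution for every $q$'': applying this to $q:=-My\geq 0$ gives both $z=0$ and $z=y$ as solutions of the same LCP, a contradiction.

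For the converse (sign-reversal implies $P$-matrix), I argue the contrapositive: if some principal minor of $Q$ is nonpositive, I exhibit a nonzero sign-reverser. If $\det Q_{\Lambda\Lambda}=0$, any nonzero $y\in\Ker Q_{\Lambda\Lambda}$, padded with zeros outside $\Lambda$, satisfies $x_i(Qx)_i=0$ for all $i$. If $\det Q_{\Lambda\Lambda}<0$, I induct on $|\Lambda|$: either some strictly smaller principal minor is already nonpositive (apply the induction hypothesis and pad with zeros), or every proper principal minor of $Q_{\Lambda\Lambda}$ is positive. In the latter case, setting $x_n=1$ and solving the first $|\Lambda|-1$ equations $(Qx)_i=0$ is feasible, and the Schur-complement identity $\det Q_{\Lambda\Lambda}=\det Q_{\Lambda\setminus\{n\},\Lambda\setminus\{n\}}\cdot(Qx)_n$ forces $(Qx)_n<0$, producing a sign-reverser. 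The main obstacle throughout is the inductive step in step~3 of the forward direction, which is why most expositions shortcut it by invoking LCP uniqueness.
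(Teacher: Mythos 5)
Your converse direction is sound (the kernel-vector padding for a vanishing minor and the Schur/Cramer identity producing $(Qx)_k<0$ for a negative minor both check out), and the first two reductions of the forward direction (diagonal similarity $\tilde Q=DQD$, restriction to the support) are also correct. The genuine gap is the assertion you reduce everything to: that no $P$-matrix $M$ admits $y>0$ with $My\leq 0$. Your Schur-complement induction does not close as described. The identity $Sy'=(My)_{1:n-1}-d^{-1}b\,(My)_n$ is correct, but since $(My)_n\leq 0$ and $d>0$, the second term is a \emph{nonnegative} multiple of $b$, whose entries have no prescribed sign; all you can conclude is $Sy'\leq \lambda b$ with $\lambda:=-d^{-1}(My)_n\geq 0$, which is not the hypothesis $Sy'\leq 0$ needed to invoke the inductive assumption on the $P$-matrix $S$ unless $b\leq 0$ or $(My)_n=0$. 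The ``careful sign analysis'' you defer is exactly the content of the theorem. The proposed shortcut via ``$M$ is a $P$-matrix iff $\mathrm{LCP}(q,M)$ is uniquely solvable for every $q$'' is circular relative to the cited source: in Cottle--Pang--Stone the uniqueness half of that equivalence is itself deduced from the sign-reversal characterization, i.e.\ from the very lemma at hand (the paper offers no proof of the lemma; it simply quotes~\cite[Theorem~3.3.4]{CPS92}).

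The standard repair is short and in fact makes your two preliminary reductions unnecessary. First show that if $M$ is a $P$-matrix and $D$ is a nonnegative diagonal matrix, then $\det(M+D)>0$: by multilinearity, $\det(M+D)=\sum_{\alpha\subseteq\{1,\dots,n\}}\big(\prod_{i\in\alpha}d_{ii}\big)\det\!\big(M_{\bar\alpha\bar\alpha}\big)$, a sum of nonnegative terms whose $\alpha=\emptyset$ term is $\det M>0$. Now suppose $x\neq 0$ satisfies $x_i(Qx)_i\leq 0$ for all $i$, and let $\alpha:=\supp(x)$. For $i\in\alpha$ one has $(Qx)_i=(Q_{\alpha\alpha}x_\alpha)_i$, so $d_i:=-(Q_{\alpha\alpha}x_\alpha)_i/x_i\geq 0$ defines a nonnegative diagonal $D_\alpha$ with $(Q_{\alpha\alpha}+D_\alpha)x_\alpha=0$ and $x_\alpha\neq 0$; hence $Q_{\alpha\alpha}+D_\alpha$ is singular, contradicting $\det(Q_{\alpha\alpha}+D_\alpha)>0$ for the $P$-matrix $Q_{\alpha\alpha}$. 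This single argument replaces both the sign-flip/support reductions and the problematic induction, and keeps the proof noncircular with respect to the LCP theory built on top of it.
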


\medskip 

\begin{theorem}\label{theorem:fixedpoints_Pmatrix}
	Let $m=n$. Suppose that $Q$ given by \eqref{eqn:Qmatrix} is a $P$-matrix. Then for any $c\in \Re^n$, we have 
	\[\Fix (P_{C_1}\circ P_{C_2}) = C_1\cap C_2. 
	\]
	In particular, by Remark~\ref{remark:howtogetnondegeneracy}, the above equality holds if $\sigma_{\min} (A) > \sigma_{\max} (B)$. 
\end{theorem}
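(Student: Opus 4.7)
The plan is to improve on Theorem~\ref{theorem:fixedpoints} by showing that, under the stronger $P$-matrix assumption, the entire set $\Fix(P_{C_1}\circ P_{C_2})$ lies in $\Omega$; in fact, I can argue directly that any fixed point already belongs to $C_2$. The reverse inclusion $C_1 \cap C_2 \subseteq \Fix(P_{C_1}\circ P_{C_2})$ is automatic, so I focus on the forward inclusion. Take $w=(u,v)\in\Fix(P_{C_1}\circ P_{C_2})$, so that $w = P_{C_1}(w')$ for some $w'=(u',v')\in P_{C_2}(w)$. Since a $P$-matrix is in particular nondegenerate, Lemma~\ref{lemma:property_Q} applies, and because $w-w'$ is the residual of a projection onto the affine set $C_1$, one has $w-w'\in\Ker(T)^\perp = \Ker([I_n~Q])$, i.e.\ $\tilde u = -Q\tilde v$, where $\tilde u := u-u'$ and $\tilde v := v-v'$.

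The key step is to show that $\tilde u_i \tilde v_i \ge 0$ for every $i$, via a case split driven by Proposition~\ref{prop:P_C2}. If $u_i>v_i$, then $(u'_i,v'_i)=((u_i)_+,0)$, so $\tilde v_i=v_i$ and $\tilde u_i=-(u_i)_-$; subcase $u_i\ge 0$ gives $\tilde u_i=0$, while subcase $u_i<0$ forces $v_i<u_i<0$, making both $\tilde u_i,\tilde v_i$ negative, so $\tilde u_i\tilde v_i\ge 0$ in either subcase. The case $u_i<v_i$ is symmetric. The tie case $u_i=v_i$ splits into $u_i=v_i>0$, where $(u'_i,v'_i)$ is one of $(u_i,0)$ or $(0,v_i)$ and makes one of $\tilde u_i,\tilde v_i$ vanish, and $u_i=v_i\le 0$, where $(u'_i,v'_i)=(0,0)$ and $\tilde u_i\tilde v_i = u_iv_i\ge 0$.

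Once the componentwise sign condition is in hand, the $P$-matrix characterization in Lemma~\ref{lemma:Pmatrix} closes the argument: from $\tilde u = -Q\tilde v$ one reads off
\[
\tilde v_i\,(Q\tilde v)_i \;=\; -\tilde v_i\tilde u_i \;\le\; 0 \qquad \forall\, i=1,\dots,n,
\]
which forces $\tilde v=0$, hence $\tilde u=-Q\tilde v=0$, hence $w=w'\in C_2$. Combined with $w=P_{C_1}(w')\in C_1$ this yields $w\in C_1\cap C_2$, completing the inclusion $\Fix(P_{C_1}\circ P_{C_2})\subseteq C_1\cap C_2$. The last sentence of the theorem is then immediate from Remark~\ref{remark:howtogetnondegeneracy}, since the computation there in fact shows that $\sigma_{\min}(A)>\sigma_{\max}(B)$ implies $Q$ is positive definite, hence a $P$-matrix.

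I do not expect any serious obstacle. The case analysis is essentially bookkeeping, and the only conceptual content is the observation that the identity $\tilde u=-Q\tilde v$ converts the componentwise sign constraints coming from $P_{C_2}$ into precisely the hypothesis of Lemma~\ref{lemma:Pmatrix}. This is exactly what is needed to eliminate the spurious fixed points that survived in the merely nondegenerate setting of Theorem~\ref{theorem:fixedpoints}.
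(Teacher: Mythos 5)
Your argument is correct, and it takes a more direct route than the paper's. The paper keeps the $P$-matrix hypothesis in a supporting role: it assumes, for contradiction, that some $(u_j,v_j)\in\Re^2_{--}$, uses Lemma~\ref{lemma:Pmatrix} to extract a single index $l$ with $(v_l-v_l')(Q(v-v'))_l>0$, rules this out by a two-case analysis of Proposition~\ref{prop:P_C2} together with the identity $u-u'+Q(v-v')=0$, concludes $w\in\Omega$, and then invokes Theorem~\ref{theorem:fixedpoints} — hence implicitly the whole nondegeneracy machinery of Proposition~\ref{lemma:Pnondegenerate_relationwithD}, Lemma~\ref{lemma:switchingcolumns_nonsingular} and Theorem~\ref{theorem:fixedpoints_arbitrary}. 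You instead prove the unconditional componentwise sign inequality $(u_i-u_i')(v_i-v_i')\ge 0$ straight from Proposition~\ref{prop:P_C2} (your case check is complete and correct, including the tie cases), and then apply Lemma~\ref{lemma:Pmatrix} to the full vector $v-v'$ to force $w=w'$, so the fixed point lies in $C_2$ without any detour through $\Omega$. This makes the proof self-contained modulo Lemma~\ref{lemma:property_Q}, Proposition~\ref{prop:P_C2} and Lemma~\ref{lemma:Pmatrix}, and it isolates the real content of the $P$-matrix assumption (a sign condition on the projection residual); what the paper's version buys in exchange is reuse of the already-established characterization $\Fix(P_{C_1}\circ P_{C_2})\cap\Omega=C_1\cap C_2$ and an explicit view of how the $P$-matrix hypothesis upgrades the nondegenerate case by eliminating fixed points outside $\Omega$. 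One cosmetic remark: the phrase ``a $P$-matrix is in particular nondegenerate'' is not what licenses Lemma~\ref{lemma:property_Q}; rather, the hypothesis that $Q$ in \eqref{eqn:Qmatrix} is defined already presupposes $A-B$ nonsingular (the paper words this the same way), so nothing is lost.
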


\begin{proof}
	Suppose that $w\in (P_{C_1}\circ P_{C_2})(w)$. As in the proof of Theorem~\ref{theorem:fixedpoints_arbitrary}, we have $w-w' \in \Ker (T)^{\perp}$ where $w' \in P_{C_2}(w)$. Since $Q$ is nondegenerate, $A-B$ is necessarily nonsingular so that by Lemma~\ref{lemma:property_Q}, $w-w' \in \Ker ([I_n~~Q])$, i.e. 
	\begin{equation}\label{eqn:fixedpointequation2}
	u-u' + Q(v-v') = 0 .
	\end{equation}
	Observe that to prove the desired result, Theorem~\ref{theorem:fixedpoints} implies that it is enough to prove that $w\in \Omega$, i.e. $(u_i,v_i)\notin \Re^2_{--}$ for all $i$. Suppose to the contrary that there exists an index $j$ such that $u_j,v_j<0$. Then from Proposition~\ref{prop:P_C2}, we know that $u_j'=v_j'=0$ so that $v_j-v_j'<0$. In particular, $v-v'$ is a nonzero vector. Consequently, by Lemma~\ref{lemma:Pmatrix}, there exists some $l$ such that
	\begin{equation}\label{eqn:Pmatrixinequality}
	(v_l-v_l')(Q(v-v'))_l >0
	\end{equation}
	We consider two cases:
	\begin{itemize}
		\item[(i)] Suppose that $v_l-v_l'>0$. From Proposition~\ref{prop:P_C2}, this can only happen if $u_l\geq v_l$ and $(u_l',v_l')=(u_l,0)$. Thus, we obtain that $u_l-u_l'=0$. From equation \eqref{eqn:fixedpointequation2}, it follows that $(Q(v-v'))_l=0$. This is a contradiction to \eqref{eqn:Pmatrixinequality}. 
		\item[(ii)] Suppose that $v_l-v_l'<0$. We conclude from Proposition~\ref{prop:P_C2} that $v_l<0$ and $v_l'=0$. Moreover, it also follows from the same proposition that $u_l-u_l'\leq 0$. From equation \eqref{eqn:fixedpointequation2}, it must be the case that $(Q(v-v'))_l\geq 0$. Hence, $(v_l-v_l')(Q(v-v'))_l \leq 0$. However, this is a direct contradiction to \eqref{eqn:Pmatrixinequality}.
	\end{itemize}
	Hence, it is impossible that there exists $j$ such that $(u_j,v_j)\in \Re^2_{--}$, i.e. $w\in \Omega$. This completes the proof. 
\end{proof}

\medskip 
\begin{remark}\label{remark:AVE_equiv_LCP}
	If $A-B$ is nonsingular, then the feasibility problem \eqref{eqn:AVE_as_a_FP2} is equivalent to solving the system
	\[u\geq 0, \qquad F(u):= Q^{\T}u - \sqrt{2}(A-B)^{-1}c \geq 0, \qquad \text{and} \qquad \lla u, F(u) \rla = 0, \]
	known in the literature as a \emph{linear complementarity problem} (LCP). The above LCP has a unique solution for all $c\in \Re^n$ if and only if $Q$ is a $P$-matrix~\cite{CPS92}. Thus, Theorem~\ref{theorem:fixedpoints_Pmatrix} indicates that if $Q$ is a $P$-matrix, then for any $c\in \Re^n$, the set of fixed points of $P_{C_1}\circ P_{C_2}$ consists of a single point, which is precisely the solution of the feasibility problem \eqref{eqn:AVE_as_a_FP2}. $_{\blacksquare }$ 
\end{remark}

\medskip 

The next result, which is a very special case, provides another condition for the equality of the set of fixed points and the intersection of $C_1$ and $C_2$. 
\medskip 

\begin{theorem}\label{theorem:fixedpoints_2n}
	Suppose that $C_1 \cap R_{\tau}\neq \emptyset$ for all $\tau \in \mathscr{T}$. Then 
	\[\Fix (P_{C_1} \circ P_{C_2}) = C_1 \cap C_2  \]
\end{theorem}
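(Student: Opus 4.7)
My plan is to bridge from the nonconvex setting $(C_1,C_2)$ to a convex setting $(C_1,R_\tau)$ where the classical MAP fixed-point theory applies, and then invoke it. The inclusion $C_1 \cap C_2 \subseteq \Fix(P_{C_1}\circ P_{C_2})$ is automatic, so only the reverse needs argument. Take $w \in \Fix(P_{C_1}\circ P_{C_2})$; by definition there is some $w' \in P_{C_2}(w)$ with $w = P_{C_1}(w')$, and I plan to show that in fact $w = w'$, which places $w$ in $C_2$ and finishes the proof.

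The first key step is to reduce from $C_2$ to one of its convex pieces $R_\tau$. Since $w' \in C_2 = \bigcup_{\tau\in\mathscr{T}} R_\tau$, fix some $\tau$ with $w' \in R_\tau$. I claim that $w' = P_{R_\tau}(w)$: the set $R_\tau$ is closed and convex (each defining constraint $(u_i,v_i)\in M_{\tau(i)}$ is a convex ray and the constraints decouple across $i$), so $P_{R_\tau}$ is single-valued; moreover, $R_\tau \subseteq C_2$ gives $d(w,R_\tau) \geq d(w,C_2) = \|w-w'\|$, a bound already attained by $w'\in R_\tau$ itself.

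The second step applies the standing hypothesis: by assumption $C_1\cap R_\tau\neq\emptyset$, so pick $w^* \in C_1\cap R_\tau$. Now I am in the setting of alternating projections between two convex sets sharing a common point, and can invoke the standard fact that the only fixed points of $P_{C_1}\circ P_{R_\tau}$ lie in $C_1\cap R_\tau$. Concretely, the variational characterizations of projection onto a convex set give $\lla w-w', z-w' \rla \leq 0$ for every $z\in R_\tau$ and $\lla w'-w, z-w \rla \leq 0$ for every $z\in C_1$; choosing $z = w^*$ in both inequalities and adding telescopes the left-hand side to $\|w-w'\|^2$, forcing $w = w'$ as desired.

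I expect the main obstacle to be the reduction step: recognising that any selection $w'$ of the set-valued projection $P_{C_2}(w)$ which happens to lie in a given $R_\tau$ must in fact be the (single-valued) projection of $w$ onto that convex piece. Once this bridge is built, the nonconvexity of $C_2$ is entirely neutralised and the conclusion follows from the classical variational inequalities. The hypothesis $C_1\cap R_\tau\neq\emptyset$ for every $\tau\in\mathscr{T}$ is precisely what guarantees the availability of the common point $w^*$ no matter which $R_\tau$ captures $w'$.
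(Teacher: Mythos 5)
Your proposal is correct and follows essentially the same route as the paper's own proof: locate the selection $w'\in P_{C_2}(w)$ in some convex piece $R_\tau$, note that it is then the projection onto $R_\tau$, and add the two variational inequalities at a common point $w^*\in C_1\cap R_\tau$ to force $w=w'$. Your explicit justification that $w'=P_{R_\tau}(w)$ (via $d(w,R_\tau)\geq d(w,C_2)=\|w-w'\|$) is a detail the paper states without proof, but the argument is identical in substance.
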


\begin{proof}
	Suppose $w = P_{C_1}(w')$ where $w' \in P_{C_2}(w)$. Choose $\tau \in \mathscr{T}$ such that $w' \in R_{\tau}$, so that $w' = P_{R_{\tau }}(w)$. Taking $w^* \in C_1 \cap R_{\tau}$ and using the convexity of $C_1$ and $R_{\tau}$, we obtain $\lla w' - w, w^* - w\rla \leq 0$ and $\lla w-w',w^*-w' \rla \leq 0$, respectively. Adding these two inequalities, we see that $\| w' - w \|^2 \leq 0$ so that $w=w'$ and therefore $w\in C_1 \cap C_2$. The other inclusion is trivial, and thus, the proof is complete.  
\end{proof}
\medskip 
As a consequence, we state the following corollary whose hypothesis is the setting considered in~\cite{MM06}.

\begin{corollary}
	Let $A\in \Re^{n\times n}$, $B=-I_n$ and $c<0$. If $\|A\|_{\infty} < \frac{\alpha}{2}$ where $\alpha = \frac{\min _i |c_i|}{\max _i |c_i|}$, then 
	\[\Fix (P_{C_1} \circ P_{C_2}) = C_1 \cap C_2   \]
\end{corollary}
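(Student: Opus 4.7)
The strategy is to apply Theorem~\ref{theorem:fixedpoints_2n}, so the entire task reduces to verifying the hypothesis $C_1 \cap R_{\tau} \neq \emptyset$ for every $\tau \in \mathscr{T}$. Fix such a $\tau$, and encode it as a signature $\sigma \in \{+1,-1\}^n$ with $\sigma_i = 1$ if $\tau(i)=1$ and $\sigma_i = -1$ if $\tau(i)=2$; let $\Sigma = \diag(\sigma_1,\dots,\sigma_n)$. Via the change of variables of Section~\ref{subsec:changeofvariables}, a point $w = (u,v) \in R_{\tau}$ corresponds to $x = \tfrac{1}{\sqrt{2}}(u-v)$ whose sign pattern matches $\sigma$, so that $|x| = \Sigma x$ with $\Sigma x \geq 0$. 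Since $B = -I_n$, membership in $C_1$ is equivalent to $Ax - |x| = c$. Thus, producing a point in $C_1 \cap R_{\tau}$ amounts to finding $\tilde x := \Sigma x \geq 0$ that solves
\[
(I_n - A\Sigma)\tilde x = -c.
\]

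Next, I invert $I_n - A\Sigma$ by a Neumann series. Since $\|A\Sigma\|_\infty = \|A\|_\infty < \alpha/2 \leq 1/2$, the series $(I_n - A\Sigma)^{-1} = \sum_{k=0}^\infty (A\Sigma)^k$ converges in the induced $\infty$-norm, and I define
\[
\tilde x := (I_n - A\Sigma)^{-1}(-c) = (-c) + e, \qquad e := \sum_{k=1}^\infty (A\Sigma)^k (-c).
\]
Setting $\beta := \|A\|_\infty$, a geometric-series bound gives $\|e\|_\infty \leq \frac{\beta}{1-\beta}\|c\|_\infty$. Using $c<0$, each coordinate of $-c$ satisfies $(-c)_i = |c_i| \geq \min_j |c_j| = \alpha \|c\|_\infty$, so
\[
\tilde x_i \;\geq\; \alpha\|c\|_\infty - \tfrac{\beta}{1-\beta}\|c\|_\infty.
\]
The main step is to verify this lower bound is strictly positive. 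The inequality $\alpha > \tfrac{\beta}{1-\beta}$ is equivalent to $\beta < \tfrac{\alpha}{1+\alpha}$, and since $\alpha \leq 1$ one has $\tfrac{\alpha}{1+\alpha} \geq \tfrac{\alpha}{2}$, so the hypothesis $\beta < \alpha/2$ suffices. Consequently $\tilde x > 0$ coordinatewise.

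To finish, I set $x := \Sigma \tilde x$, which has the required sign pattern (since $\Sigma \tilde x$ and $\sigma$ agree in sign componentwise) and satisfies $Ax - |x| = c$ by construction. The corresponding $w = (u,v) = \tfrac{1}{\sqrt{2}}(x_+, (-x)_+)$ then lies in $C_1 \cap R_{\tau}$, establishing the hypothesis of Theorem~\ref{theorem:fixedpoints_2n}; the desired equality $\Fix(P_{C_1}\circ P_{C_2}) = C_1 \cap C_2$ follows immediately. The only nontrivial step is the positivity bound in the Neumann-series estimate, which hinges on the numerical chain $\beta < \alpha/2 \leq \alpha/(1+\alpha)$ that quietly uses $\alpha \leq 1$.
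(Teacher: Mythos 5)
Your proposal is correct and shares the paper's skeleton---everything is funneled through Theorem~\ref{theorem:fixedpoints_2n}, so the work is to show $C_1\cap R_\tau\neq\emptyset$ for every $\tau\in\mathscr{T}$---but you verify that hypothesis by a different, self-contained route. The paper simply invokes \cite[Proposition~6]{MM06}, which asserts that under these hypotheses the AVE has exactly $2^n$ solutions, one per strict sign pattern, and then feeds these into Theorem~\ref{theorem:fixedpoints_2n}. You instead reprove the needed existence statement from scratch: for each signature $\Sigma=\diag(\sigma)$ you solve $(I_n-A\Sigma)\tilde{x}=-c$ by a Neumann series (valid since $\|A\Sigma\|_\infty=\|A\|_\infty<\alpha/2\leq 1/2$), and the estimate $\tilde{x}_i\geq\bigl(\alpha-\tfrac{\beta}{1-\beta}\bigr)\|c\|_\infty>0$, using $\beta<\alpha/2\leq\alpha/(1+\alpha)$, gives strict componentwise positivity, hence a solution $x=\Sigma\tilde{x}$ with the prescribed sign pattern; this chain of inequalities is correct. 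What your approach buys is independence from the external reference (and it makes explicit exactly where the constant $\alpha/2$ is used); what it costs is length, since you are essentially re-deriving the content of \cite[Proposition~6]{MM06}. One cosmetic slip, inherited from a factor-of-two typo in Section~\ref{subsec:changeofvariables} of the paper itself: the point of $C_1\cap R_\tau$ associated with $x$ is $w=R^{\T}(x,|x|)=\sqrt{2}\,(x_+,(-x)_+)$, not $\tfrac{1}{\sqrt{2}}(x_+,(-x)_+)$; since membership in $C_1=\{w:Tw=\sqrt{2}c\}$ is not scale-invariant the constant matters, but replacing your last display by the correct rescaling changes nothing else in the argument.
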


\begin{proof}
	From~\cite[Proposition~6]{MM06}, we know that the AVE \eqref{eqn:AVE} has exactly $2^n$ distinct solutions, each of which has no zero components and has different sign pattern. Thus, each $R_{\tau}$ contains a point in $C_1\cap C_2$ in its interior. The claim then follows from Theorem~\ref{theorem:fixedpoints_2n}.
\end{proof}

\section{Convergence analysis}\label{sec:convergenceanalysis}
In this section, we discuss the convergence issues related to the proposed method of alternating projections. In Section \ref{subsec:localconvergence}, we present some local convergence results which are direct consequences of the theory developed in \cite{DT19}. We present an alternative local convergence analysis in Section \ref{subsec:newCfunction} through the use of a new complementarity function. A by-product of this alternative analysis is the global convergence of MAP for homogeneous AVE. In addition, we also prove in Section \ref{subsec:newCfunction} that under a nondegeneracy assumption, the MAP iterates cannot be trapped in some region $S_{\tau}$ (defined in Section \ref{subsec:projectionformulas}) if $S_{\tau}$ does not contain a solution of the feasibility problem \eqref{eqn:AVE_as_a_FP2}. In Section \ref{subsec:rateofconvergence}, we establish the linear rate of convergence of MAP. A globally convergent relaxation of MAP is presented in \ref{subsec:globalconvergence_relaxedMAP}. Finally, another algorithm derived from the fixed point relation $w\in (P_{C_1}\circ P_{C_2})(w)$ is described in Section \ref{subsec:relatedfixedpointalgorithm}. 

\subsection{Convergence of MAP}\label{subsec:localconvergence}
The method of alternating projections and its generalization to more than two sets are globally convergent when the involved sets are convex~\cite{Breg65}. For our problem \eqref{eqn:AVE_as_a_FP2}, the set $C_1$ is affine (hence, convex) while $C_2$ is a nonconvex set. Nevertheless, $C_2$ is a \emph{union convex set}, i.e. it can be expressed as a finite union of closed convex sets~\cite{DT19}. In particular, we can write $C_2$ as $ \ds C_2 = \bigcup _{\tau \in \mathscr{T}} R_{\tau}$
where each $R_{\tau}$ is a closed convex set as defined in the preceding section. Thus, the local convergence of MAP is a direct consequence of~\cite[Corollary~6.2]{DT19}.

\begin{theorem}[Local convergence of MAP]\label{theorem:local_general}
	Suppose $w^* \in C_1\cap C_2$. Then there exists sufficiently small $\delta>0$ such that for any $w^0$ with $\|w^0 - w^*\|<\delta$, any sequence generated by \eqref{eqn:MAP_2} converges to a solution of \eqref{eqn:AVE_as_a_FP2}. 	
\end{theorem}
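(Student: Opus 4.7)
The plan is to recognize this as a direct application of \cite[Corollary~6.2]{DT19}, whose hypothesis requires precisely the \emph{union convex} structure that has been assembled in the preceding discussion. Specifically, $C_1$ from \eqref{eqn:C1} is an affine subspace, hence closed and convex, while $C_2$ from \eqref{eqn:C2} admits the finite decomposition $C_2 = \bigcup_{\tau \in \mathscr{T}} R_\tau$ into the closed convex (indeed polyhedral) cones $R_\tau$ constructed via \eqref{eqn:M1_a>b}-\eqref{eqn:M2_a<b}. Since $|\mathscr{T}| = 2^n$, the union is finite, placing the problem squarely within the union convex framework.

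First I would pick a $\tau^* \in \mathscr{T}$ with $w^* \in R_{\tau^*}$, which exists because $w^* \in C_2$. This guarantees that the local intersection $C_1 \cap R_{\tau^*}$ is nonempty at $w^*$, which is the basic feasibility hypothesis required by the cited corollary. Next I would verify the remaining ingredients of \cite[Corollary~6.2]{DT19}: closedness of each $R_\tau$, convexity of $C_1$, and finiteness of $\mathscr{T}$. Invoking the corollary then produces a $\delta > 0$ such that every MAP sequence \eqref{eqn:MAP_2} with $\|w^0 - w^*\| < \delta$ converges to some point in $C_1 \cap C_2$, which is the desired conclusion.

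The principal difficulty, which is the content of \cite{DT19} and hence subsumed by the citation, is the control of the multi-valued projector $P_{C_2}$. By Proposition~\ref{prop:P_C2}, multivaluedness occurs only on the exceptional set where $u_i = v_i > 0$ for some $i$; away from this set the projection is single-valued and locally agrees with the projection onto some $R_\tau$ containing $w^*$. The main obstacle the cited corollary must dispatch is ensuring that any measurable selection $w' \in P_{C_2}(w^k)$ belongs to an $R_\tau$ that remains active throughout the iteration, so that the local dynamics reduce to alternating projections between two closed convex sets $C_1$ and $R_\tau$, for which classical convergence applies. Once this reduction is in place via \cite[Corollary~6.2]{DT19}, the statement follows without further calculation.
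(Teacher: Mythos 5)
Your proposal is correct and follows essentially the same route as the paper: the paper also observes that $C_2=\bigcup_{\tau\in\mathscr{T}}R_\tau$ is a finite union of closed convex sets (a union convex set) while $C_1$ is affine, and then obtains the local convergence directly from \cite[Corollary~6.2]{DT19}. The additional remarks you make about the multivaluedness of $P_{C_2}$ are consistent with the paper's setup and do not change the argument.
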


\medskip 
On the other hand, the global convergence of MAP to solutions of the feasibility problem \eqref{eqn:AVE_as_a_FP2} is not always guaranteed. 

\begin{example}
	In Example~\ref{example:fixedpoints}.1, the MAP iterates \eqref{eqn:MAP_2} may converge to a fixed point of $P_{C_1}\circ P_{C_2}$ that does not belong to $C_1\cap C_2$. For instance, if we set $w^0 = (-1, 5, 9, 1)$, it can be verified that $w^k$ converges to the point $w=(-0.9231,4.8077,9.1923,0.6154) \in \Fix (P_{C_1}\circ P_{C_2})\setminus (C_1\cap C_2)$.
\end{example} 

In fact, the following example shows that unique solvability does not imply global convergence.

\begin{example}
	Let $A=1$, $B=-1$ and $c=-2/\sqrt{2}$. Then $C_1$ is the horizontal line $v=1$ while $C_2$ is the union of the nonnegative $u$ and $v$ axes. In Figure~\ref{fig:MAPdivergent}, we see that $w^*=(0,1)$ is the unique solution to \eqref{eqn:AVE_as_a_FP2}. Meanwhile, MAP is not globally convergent to $w^*$. 
\end{example}

\begin{figure}[hbtp!]
	\begin{center}
		\includegraphics[scale=1.5]{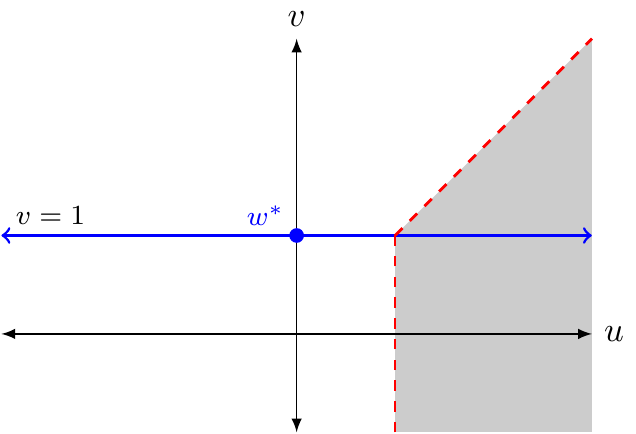}
	\end{center}
	\caption{The method of alternating projections converges to a point in $\Fix (P_{C_1}\circ P_{C_2})\setminus (C_1\cap C_2)$ when the initial point $w^0$ lies on the gray region, while the convergence to $w^*$ depends on the selected element of $P_{C_2}(w^0)$ if $w^0$ lies on the red dashed line.}\label{fig:MAPdivergent}
\end{figure}

\medskip 
In both of the examples above, we note that the matrix $Q$ defined by \eqref{eqn:Qmatrix} is degenerate. This suggests that for the case $m=n$, nondegeneracy of $Q$ may be a necessary condition for global convergence to $C_1\cap C_2$. We leave this as a conjecture which is worth further investigation. Note, however, that nondegeneracy is not sufficient for global convergence to solutions (for example, see Example~\ref{example:fixedpoints}.2). 

\medskip
We close this section by identifying two specific instances when the method of alternating projections is globally convergent. 

\begin{proposition}
	Suppose $T\in \Re^{m\times 2n}$ has full column rank. Then the feasibility problem \eqref{eqn:AVE_as_a_FP2} has a solution if and only if $TT^{\dagger}c=c$ and $\sqrt{2}T^{\dagger}c\in C_2$. In particular, $\sqrt{2}T^{\dagger}c$ is the unique solution to \eqref{eqn:AVE_as_a_FP2} whenever a solution exists. Moreover, any sequence generated by \eqref{eqn:MAP_2} converges finitely to $\sqrt{2}T^{\dagger}c$ (after one iteration). 
\end{proposition}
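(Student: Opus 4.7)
The plan is to exploit two consequences of the full column rank hypothesis on $T$: first, $T^{\dagger}T = I_{2n}$, and second, $TT^{\dagger}$ is the orthogonal projection onto $\Ran(T)$. These two facts do essentially all the work.

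For the ``only if'' direction of the equivalence, I would start from a purported solution $w^* \in C_1\cap C_2$, which by definition of $C_1$ gives $Tw^* = \sqrt{2}c$. This forces $\sqrt{2}c \in \Ran(T)$, and since $TT^{\dagger}$ is the projector onto $\Ran(T)$, this gives $TT^{\dagger}c = c$. Applying $T^{\dagger}$ on the left of $Tw^* = \sqrt{2}c$ and using $T^{\dagger}T = I_{2n}$ yields $w^* = \sqrt{2}T^{\dagger}c$, which then lies in $C_2$ because $w^*$ does. For the ``if'' direction, I would set $w := \sqrt{2}T^{\dagger}c$ and verify directly that $Tw = \sqrt{2}TT^{\dagger}c = \sqrt{2}c$, so $w \in C_1$; coupled with the hypothesis $w \in C_2$, this gives a solution. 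Uniqueness is immediate: any solution must satisfy $Tw = \sqrt{2}c$, and by $T^{\dagger}T = I_{2n}$ the only such $w$ is $\sqrt{2}T^{\dagger}c$.

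For the finite convergence claim, assume a solution exists (otherwise $C_1=\emptyset$ and the statement is vacuous). In particular $c\in\Ran(T)$, so Proposition~\ref{prop:P_C1} applies. Given any $w^0$ and any $w' \in P_{C_2}(w^0)$, the first MAP iterate is
\[
w^1 \;=\; P_{C_1}(w') \;=\; w' - T^{\dagger}(Tw' - \sqrt{2}c) \;=\; (I_{2n} - T^{\dagger}T)w' + \sqrt{2}T^{\dagger}c \;=\; \sqrt{2}T^{\dagger}c,
\]
where the last equality uses $T^{\dagger}T = I_{2n}$. Since $\sqrt{2}T^{\dagger}c$ lies in $C_1\cap C_2\subseteq\Fix(P_{C_1}\circ P_{C_2})$, every subsequent iterate remains at this point, giving finite termination after one step.

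There is no real obstacle here; the only mild point of care is making sure the projection formula of Proposition~\ref{prop:P_C1} is invoked only when $c\in\Ran(T)$, which is automatic in the case of interest. The argument is essentially linear algebra driven by $T^{\dagger}T = I_{2n}$, which kills the dependence of $w^1$ on $w'$ entirely.
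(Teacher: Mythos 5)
Your proposal is correct and follows essentially the same route as the paper's proof: both arguments rest on the fact that full column rank gives $T^{\dagger}T=I_{2n}$, so that $P_{C_1}$ is the constant map $w\mapsto \sqrt{2}T^{\dagger}c$, which simultaneously yields the characterization of solvability, uniqueness, and termination after one MAP iteration. One cosmetic remark: for the finite-convergence claim you only need $c\in\Ran(T)$ (so that $C_1\neq\emptyset$), not the existence of a solution of the feasibility problem, and your parenthetical identifying ``no solution'' with ``$C_1=\emptyset$'' is slightly inaccurate, though it does not affect the validity of the argument.
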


\begin{proof}
	If $C_1\cap C_2\neq \emptyset$, then there exists $w^*\in C_1$ so that by Proposition~\ref{prop:P_C1}, $w^*=w^*-T^{\dagger}(Tw^*-\sqrt{2}c)$. Since $T$ has full column rank, then $T^{\dagger}T=I_{2n}$. Thus, $w^* = \sqrt{2}T^{\dagger}c$ is the unique point in $C_1$ and $\sqrt{2}c = Tw^* = \sqrt{2}TT^{\dagger}c$. Moreover, since $C_1\cap C_2$ is nonempty, then $w^*$ must be in $C_2$, i.e $\sqrt{2}T^{\dagger}c \in C_2$. Conversely, $TT^{\dagger}c=c$ and $\sqrt{2}T^{\dagger}c\in C_2$ implies that $\sqrt{2}T^{\dagger}c\in C_1\cap C_2$. The convergence of any sequence generated by \eqref{eqn:MAP_2} is an immediate consequence of Proposition~\ref{prop:P_C1}. In particular, $w^k=\sqrt{2}T^{\dagger}c$ for all $k\geq 1$ given any initial point $w^0 \in \Re^n\times \Re^n$. 
\end{proof}

\medskip 
Another specific case when we obtain global convergence can be obtained when $0\in C_1\cap C_2$. 

\begin{proposition}\label{prop:global_c=0}
	If $c=0\in \Re^m$, then any sequence generated by \eqref{eqn:MAP_2} converges to a point in $\Fix (P_{C_1}\circ P_{C_2})$. 
\end{proposition}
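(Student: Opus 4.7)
The plan is to exploit the fact that when $c=0$, the set $C_1=\Ker(T)$ is a linear subspace and $C_2$ is a union of closed convex cones, both containing the origin, so that $0\in C_1\cap C_2$ is a natural Fej\'er-type anchor.

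First, I would establish a telescopic energy decrease. Write $y^k\in P_{C_2}(w^k)$ with $w^{k+1}=P_{C_1}(y^k)$. Since $y^k\in R_{\tau_k}$ for some $\tau_k\in\mathscr{T}$ and $R_{\tau_k}\subseteq C_2$, the point $y^k$ must also be the (unique) nearest point in $R_{\tau_k}$, i.e.\ $y^k=P_{R_{\tau_k}}(w^k)$. Because $R_{\tau_k}$ is a closed convex cone containing the origin, the Moreau/Pythagorean identity gives $\|w^k\|^2=\|y^k\|^2+\|w^k-y^k\|^2$. Likewise, $C_1$ being a linear subspace yields $\|y^k\|^2=\|w^{k+1}\|^2+\|y^k-w^{k+1}\|^2$. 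Adding,
\[
\|w^k\|^2-\|w^{k+1}\|^2 \;=\; \|w^k-y^k\|^2+\|y^k-w^{k+1}\|^2.
\]

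Second, $\{\|w^k\|^2\}$ is monotone nonincreasing and bounded below, hence convergent, so summing the displayed identity telescopically forces $\|w^k-y^k\|\to 0$ and $\|y^k-w^{k+1}\|\to 0$. Since $\{w^k\}$ is bounded ($\|w^k\|\leq\|w^0\|$), it admits a cluster point $w^*$; along a subsequence $w^{k_j}\to w^*$ we have $y^{k_j}\to w^*$, and since each $y^{k_j}\in C_2$ with $C_2$ closed, $w^*\in C_2$. Continuity of $P_{C_1}$ combined with $w^{k_j+1}=P_{C_1}(y^{k_j})\to w^*$ forces $P_{C_1}(w^*)=w^*$, so $w^*\in C_1\cap C_2$.

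Finally, to promote this subsequential convergence to convergence of the whole sequence, I would invoke Theorem~\ref{theorem:local_general}: there exists $\delta>0$ such that any MAP sequence initialised in $B(w^*,\delta)$ converges to a solution of~\eqref{eqn:AVE_as_a_FP2}. For $j$ sufficiently large $\|w^{k_j}-w^*\|<\delta$, and the tail $\{w^k\}_{k\geq k_j}$ is itself a valid MAP sequence starting at $w^{k_j}$, so it converges to some point of $C_1\cap C_2\subseteq\Fix(P_{C_1}\circ P_{C_2})$.

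The main obstacle I anticipate is precisely the passage from subsequential to full convergence: classical Fej\'er-monotonicity arguments relative to an arbitrary point of $C_1\cap C_2$ are unavailable because $C_2$ is nonconvex, and $C_1\cap C_2$ may be a continuum, so the mere fact that all cluster points are feasible is not enough. The local convergence theorem is exactly what bridges this gap, leveraging the union-convex structure of $C_2$ that was already exploited in Theorem~\ref{theorem:local_general}.
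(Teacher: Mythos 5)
Your argument is correct, but it is genuinely different from the paper's: the paper disposes of this proposition in one line by citing Corollary~4.3 of~\cite{DT19}, whereas you give an essentially self-contained proof that exploits the specific conical structure available when $c=0$. Your key observations all check out: with $c=0$ the set $C_1=\Ker(T)$ is a subspace, each $R_\tau$ is a closed convex cone through the origin, any $y^k\in P_{C_2}(w^k)$ lies in some $R_{\tau_k}$ and hence equals $P_{R_{\tau_k}}(w^k)$, and the two Pythagorean identities $\|w^k\|^2=\|y^k\|^2+\|w^k-y^k\|^2$ and $\|y^k\|^2=\|w^{k+1}\|^2+\|y^k-w^{k+1}\|^2$ give the telescoping energy decrease, summability of displacements, and a cluster point $w^*\in C_1\cap C_2$; the upgrade to convergence of the whole sequence via Theorem~\ref{theorem:local_general} applied to the tail starting at $w^{k_j}$ is legitimate (noting that this theorem is itself the paper's other import from~\cite{DT19}, so your proof is elementary modulo that local result rather than fully from scratch). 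What your route buys is twofold: it makes transparent \emph{why} the homogeneous case is special (both sets are cones anchored at $0$, so Fej\'er monotonicity with respect to the origin is available despite the nonconvexity of $C_2$), and it actually delivers the stronger conclusion that the limit lies in $C_1\cap C_2$, not merely in $\Fix(P_{C_1}\circ P_{C_2})$ --- a strengthening the paper only obtains later, in Remark~\ref{remark:implicationofC-fcntheorem}, as a by-product of the new $C$-function analysis of Theorem~\ref{theorem:local2}. What the paper's citation buys, by contrast, is brevity and generality, since the result of~\cite{DT19} does not depend on the cone structure induced by $c=0$.
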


\begin{proof}
	This is a direct consequence of~\cite[Corollary~4.3]{DT19}. 
\end{proof}

A result stronger than the above proposition is derived in the next section (see Remark~\ref{remark:implicationofC-fcntheorem}). In particular, we shall see that any sequence of MAP iterates generated by \eqref{eqn:MAP_2} will always converge to a point in $C_1\cap C_2$ for any initial point $w^0$ whenever $c=0$. That is, MAP is globally convergent to a solution of the feasibility problem \eqref{eqn:AVE_as_a_FP2} for homogeneous AVEs.

\subsection{Convergence analysis using a new $C$-function}\label{subsec:newCfunction}

We now provide an alternative convergence analysis for the method of alternating projections by introducing a $C$-function that is new to the literature. We recall first the notion of $C$-functions. 

\medskip 
\begin{definition}\label{defn:Cfunction}
	A function $\phi:\Re^2 \to \Re$ is called a \emph{complementarity function} (or a \emph{$C$-function}) if its zeros are precisely the points on the nonnegative axes, i.e.
	\[\phi(s,t) = 0 \quad \Longleftrightarrow \quad s\geq 0, ~t\geq 0,~\text{and}~st=0.\]
\end{definition}
\medskip 

There are several examples of $C$-functions~\cite{AC20,Gal12}, as well as methods to construct these functions~\cite{ALNCC20}. Popular choices include the natural residual (NR) function and the Fischer-Burmeister (FB) function given respectively by 
\[\phi_{_{\rm NR}}(s,t) = \min (s,t) \qquad \text{and} \qquad \phi_{_{\rm FB}}(s,t) = \sqrt{s^2+t^2} - (s+t). \]
Given any $C$-function $\phi$, we define $\Phi: \Re^{n}\times \Re^n \to \Re^{n}$ as 
\[\Phi (u,v):= \left( \begin{array}{c}
\phi (u_1,v_1)  \\ 
\vdots \\
\phi (u_n,v_n) 
\end{array}\right).\]
It is then easy to see that 
	\begin{eqnarray}
	(u^*,v^*) \in C_2 \quad & \Longleftrightarrow & \quad \Phi(u^*,v^*) = 0 \notag \\
	& \Longleftrightarrow & \quad (u^*,v^*)\in \argmin _{(u,v)\in \Re^n \times \Re^n} \Psi (w) := \frac{1}{2}\|\Phi(u,v)\|^2. \notag 
	\end{eqnarray}
Consequently, the feasibility problem \eqref{eqn:AVE_as_a_FP2} can be equivalently reformulated as a constrained minimization problem
\begin{equation}\label{eqn:meritfunction_AVE}
\min _{w\in C_1} \Psi (w),
\end{equation}
provided that $C_1\cap C_2\neq \emptyset$. Note that if we define $\psi : = \frac{1}{2}\phi^2$, then $\psi$ is also a $C$-function and $\Psi (w) = \sum_{i=1}^n \psi (u_i,v_i)$. 

\medskip 
Although different $C$-functions yield different formulations \eqref{eqn:meritfunction_AVE}, a suitable choice of $\phi$ (or $\psi$) can facilitate the convergence analysis of MAP. Inspired by the equivalence of the method of alternating projections and the projected gradient method in the case of sparse affine feasibility problem as discussed in~\cite{HLN14}, we aim to choose a suitable $C$-function $\psi$ such that the induced function $\Psi$ satisfies 
\[(P_{C_1}\circ P_{C_2})(w) = P_{C_1}\left(w-\nabla \Psi (w) \right) ,\]
(where $\Psi$ should be differentiable to begin with). Unfortunately, $P_{C_2}$ is multivalued as shown in Proposition~\ref{prop:P_C2} while the right-hand side of the above equation is single-valued. Thus, we instead find a $C$-function which induces a function $\Psi$ satisfying 
\begin{equation}\label{eqn:desired_inclusion}
(P_{C_1}\circ P_{C_2})(w) \subseteq  P_{C_1}\left(w-\partial  \Psi (w)\right) , 
\end{equation}
where $\partial \Psi (w)$ denotes the Clarke generalized gradient of $\Psi:\Re^n \times \Re^n \to \Re$ at $w$. 

\medskip 
\begin{definition}\label{defn:clarke}~\cite{FP03}
	Let $F:\Re^n \to \Re$ be locally Lipschitz continuous on $\Re^n$. 
	\begin{description}
		\item[(a)] The \emph{$B$-subdifferential} of $F$ at $x$, denoted by $\partial_BF(x)$ is given by 
		\[\partial_B F(x) := \left\lbrace \lim _{x^k\to x} \nabla F(x^k)~:~ F~\text{is differentiable at } x^k \in \Re^n \right\rbrace.\]
		\item[(b)] The \emph{Clarke generalized gradient} of $F$ at a point $x\in \Re^n$, denoted by $\partial F(x)$, is defined as the convex hull of $\partial_B F(x)$.
	\end{description}
\end{definition}
\medskip 

In the next example, we illustrate that the NR and FB functions do not satisfy condition \eqref{eqn:desired_inclusion}.

\medskip 
\begin{example}
	Let $n=1$, $A=1$, $B=0$, $c=0$ and consider the point $w=(-1,-1)$. Denote the function $\Psi$ induced by the NR and FB functions by $\Psi_{_{\rm NR}}$ and $\Psi_{_{\rm FB}}$, respectively. From Definition~\ref{defn:clarke}, one can verify that 
	\[\partial_B \Psi_{_{\rm NR}}(w) = \{ (-1,0), (0,-1) \} \quad \text{and} \quad \partial_B \Psi_{_{\rm FB}} (w) = \{ (-3-2\sqrt{2}, -3-2\sqrt{2})\}.\]
	Thus, 
	\[P_{C_1}(w-\partial \Psi_{_{\rm NR}}(w)) = \{(-0.5,-0.5) \} \quad \text{and} \quad P_{C_1}(w-\partial \Psi_{_{\rm FB}}(w)) = \{(2+2\sqrt{2}, 2+2\sqrt{2}) \} .\]
	Meanwhile, we have $(P_{C_1}\circ P_{C_2})(w) = \{ (0,0)\}$. 
\end{example}
\medskip 

In the following result, we propose a $C$-function that is new to the literature and gives the desired inclusion \eqref{eqn:desired_inclusion}. 

\begin{proposition}\label{prop:new_C-function}
	The function defined by 
	\begin{equation}\label{eqn:new_C-function}
	\psi (s,t) = \begin{cases}
	\ds \frac{s^2}{2} + \frac{(-t)_+^2}{2} & \text{if}~s\leq t, \\ 
	\ds \frac{t^2}{2} + \frac{(-s)_+^2}{2} & \text{if}~s>t
	\end{cases} = \frac{\min (s,t)^2}{2} + \frac{\max (-\max(s,t),0)^2}{2}	
	\end{equation}
	is a nonnegative $C$-function. Moreover, $\psi$ is differentiable on $K_1\cup K_2$, where $K_1$ and $K_2$ are given by \eqref{eqn:K1_a>b} and \eqref{eqn:K2_a<b}, respectively, and the $B$-subdifferential of $\psi$ is given by
	\begin{equation}\label{eqn:B-sub_of_psi}
	\partial _B \psi (s,t) = \begin{cases}
	\{ (s,-(-t)_+) \} & \text{if}~s<t ~\text{or}~s=t\leq 0 \\
	\{ (-(-s)_+, t) \} & \text{if}~s>t \\ 
	\left\lbrace (s,0),(0,t) \right\rbrace & \text{if}~s = t >0 .
	\end{cases}
	\end{equation}
\end{proposition}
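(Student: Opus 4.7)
The plan is to verify the three claims in order: (i) the nonnegative $C$-function property, (ii) differentiability on $K_1\cup K_2$, and (iii) the $B$-subdifferential formula \eqref{eqn:B-sub_of_psi}. It is convenient to work with the unified representation $\psi(s,t)=\tfrac{1}{2}\min(s,t)^2+\tfrac{1}{2}(-\max(s,t))_+^2$ already displayed in \eqref{eqn:new_C-function}. Nonnegativity is then immediate. For the $C$-function property, $\psi(s,t)=0$ is equivalent to $\min(s,t)=0$ together with $\max(s,t)\geq 0$, which forces one of $s,t$ to vanish while the other is nonnegative; this is precisely the condition $s\geq 0,\ t\geq 0,\ st=0$.

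For (ii), I would compute the gradient on the two open half-planes $\{s<t\}$ and $\{s>t\}$ directly from the defining branches of $\psi$, using that $r\mapsto (-r)_+^2/2$ is $C^1$ on $\Re$ with derivative $r\mapsto -(-r)_+$. This yields $\nabla\psi(s,t)=(s,-(-t)_+)$ on $\{s<t\}$ and $\nabla\psi(s,t)=(-(-s)_+,t)$ on $\{s>t\}$. It remains to analyze the diagonal $\{s=t\}$. When $s=t\leq 0$, the elementary identity $-(-r)_+=r$ (valid exactly for $r\leq 0$) shows that the two one-sided gradient limits coincide at $(s,s)$, both equal to $(s,s)$; hence $\psi$ is differentiable there, with gradient matching both stated expressions $(s,-(-t)_+)$ and $(-(-s)_+,t)$. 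Together with smoothness on the open half-planes, this establishes differentiability on all of $K_1\cup K_2$. When $s=t>0$, the same one-sided computations produce $(s,0)$ from $\{s'<t'\}$ and $(0,s)$ from $\{s'>t'\}$; since these limits differ, $\psi$ fails to be differentiable at such points.

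To assemble (iii), I invoke Definition \ref{defn:clarke}. At each point where $\psi$ is differentiable, $\partial_B\psi$ is the singleton containing the gradient, giving the first two cases of \eqref{eqn:B-sub_of_psi}. At a point $(s,s)$ with $s>0$, any sequence of differentiable points converging to $(s,s)$ must eventually lie in one of the two open half-planes, since no nearby diagonal point is differentiable; by the computations above, the only gradient limits along such sequences are $(s,0)$ and $(0,s)$, and both are realized, so $\partial_B\psi(s,s)=\{(s,0),(0,s)\}$ as claimed.

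The only delicate step is the diagonal analysis; the rest is mechanical case-splitting. The key observation that makes everything snap together is the equivalence $-(-r)_+=r \iff r\leq 0$, which is precisely what distinguishes the portion $K_1\cap K_2=\{s=t\leq 0\}$ of the diagonal on which $\psi$ remains $C^1$ from the portion $\{s=t>0\}$ on which $\psi$ has a nonsmooth kink.
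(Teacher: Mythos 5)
Your proof is correct and takes essentially the same route as the paper: direct verification of the $C$-function property, gradient computation on the two half-planes, and identification of $(s,0)$ and $(0,t)$ as the only limits of nearby gradients at diagonal points with $s=t>0$. Two minor caveats within this same approach: the paper also records that $\psi$ is locally Lipschitz (needed for $\partial_B\psi$ in Definition~\ref{defn:clarke} to be defined), and your blanket assertion that $\partial_B\psi$ is the gradient singleton at every point of differentiability is not a general fact --- it is justified here only because your one-sided computations show that all nearby gradient limits coincide with $\nabla\psi(s,t)$ in the cases $s<t$, $s>t$, and $s=t\le 0$.
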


\begin{proof}
	Due to the symmetry of $\psi$ (that is, $\psi (s,t) = \psi (t,s)$), we only need to verify the equivalence in Definition~\ref{defn:Cfunction} for $s\leq t$. In this case, 
	\[\psi (s,t) = 0 \quad \Longleftrightarrow \quad s=0 ~ \text{and}~(-t)_+= 0 \quad \Longleftrightarrow \quad s=0~\text{and}~t\geq 0.\]
	This proves that $\psi$ is a $C$-function. It can also be verified that $\psi$ is locally Lipschitz continuous on $\Re^2$ (see also~\cite[Lemma~4.6.1]{FP03} or~\cite[Proposition~4.1.2]{Scholtes12}). Next, note that $\psi$ is differentiable only on $K_1\cup K_2$. The first two cases in formula \eqref{eqn:B-sub_of_psi} can be easily verified. If $s=t>0$ and $\{(s^k,t^k) \}_{k=1}^{\infty}$ is a sequence in $K_1\cup K_2$ converging to $(s,t)$, then for sufficiently large $k$, the sequence lie in $\Re^2_{++}$. Hence, the only subsequential limits of $\{\nabla \psi (s^k,t^k) \}_{k=1}^{\infty}$ are the limits of $\{(s^k, 0 ) \}_{k=1}^{\infty}$ and $\{(0, t^k ) \}_{k=1}^{\infty}$, which are $(s,0)$ and $(0,t)$, respectively. This completes the proof.  
\end{proof}

\medskip 
We next show that the induced function $\Psi (w) $ of \eqref{eqn:new_C-function} indeed gives the desired inclusion \eqref{eqn:desired_inclusion}. In fact, the following corollary shows that the MAP iterates \eqref{eqn:MAP_2} are the same as the ``projected $B$-subdifferential'' iterates.

\medskip 
\begin{corollary}\label{cor:MAP_equiv_PGM}
	If $\psi$ is given by \eqref{eqn:new_C-function} and $\Psi:\Re^{n}\times \Re^n \to \Re_+$ is given by $\Psi (w) := \ds \sum _{i=1}^n \psi (u_i,v_i)$, then 
	\[P_{C_2}(w) =  w-\partial_B  \Psi (w).   \]
	In particular, \eqref{eqn:desired_inclusion} holds. 
\end{corollary}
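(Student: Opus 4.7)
The plan is to reduce the set equality to a coordinate-wise check. The function $\Psi(w)=\sum_{i=1}^n\psi(u_i,v_i)$ is separable with each summand depending on a disjoint pair of variables $(u_i,v_i)$, so I first argue that the $B$-subdifferential factors as a Cartesian product:
\[\partial_B\Psi(w)=\prod_{i=1}^n \partial_B\psi(u_i,v_i).\]
This is because the differentiability set of $\Psi$ is exactly the product of the differentiability sets of the summands (namely $K_1\cup K_2$ for each pair, by Proposition~\ref{prop:new_C-function}), and along any approximating sequence $w^k\to w$ of differentiability points, $\nabla\Psi(w^k)$ decomposes block-diagonally into the $\nabla\psi(u_i^k,v_i^k)$. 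Any combination of subsequential limits across the $n$ blocks is attainable since the blocks are independent.

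Granted this factorization, it suffices to verify the scalar identity $(s,t)-\partial_B\psi(s,t)=P_M(s,t)$, where $M$ is the ``L-shaped'' set in \eqref{eqn:C2bar} and $P_M$ is given by \eqref{eqn:P_C2bar}. I would split into the three cases from the expression \eqref{eqn:B-sub_of_psi}. In the case $s<t$ (or $s=t\le 0$), we have $(s,t)-(s,-(-t)_+)=(0,t+(-t)_+)=(0,t_+)$, matching the first branch of \eqref{eqn:P_C2bar}. In the case $s>t$, we obtain $(s,t)-(-(-s)_+,t)=(s+(-s)_+,0)=(s_+,0)$, matching the second branch. In the case $s=t>0$, subtracting the two elements of $\partial_B\psi(s,t)=\{(s,0),(0,t)\}$ yields $\{(0,t),(s,0)\}=\{(0,t_+),(s_+,0)\}$, matching the third branch. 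Combining these cases across all coordinates gives
\[w-\partial_B\Psi(w)=\prod_{i=1}^n P_M(u_i,v_i)=P_{C_2}(w),\]
where the last equality is the separable formula from Proposition~\ref{prop:P_C2}.

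For the final assertion \eqref{eqn:desired_inclusion}, I would simply note that $\partial_B\Psi(w)\subseteq\partial\Psi(w)$ by Definition~\ref{defn:clarke}(b) (the Clarke gradient is the convex hull of the $B$-subdifferential), so
\[P_{C_2}(w)=w-\partial_B\Psi(w)\subseteq w-\partial\Psi(w).\]
Applying the single-valued map $P_{C_1}$ to both sides and composing yields $(P_{C_1}\circ P_{C_2})(w)\subseteq P_{C_1}(w-\partial\Psi(w))$, as required.

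The main obstacle is really just the bookkeeping: carefully justifying the block-diagonal decomposition of $\partial_B\Psi$ (which is not automatic for general sums of nonsmooth functions but holds here because the summands act on disjoint variable blocks) and ensuring that the ``$s=t\le 0$'' boundary case of $\partial_B\psi$, which collapses both branches of $P_M$ to the origin, is handled consistently with the ``$s=t>0$'' case where $P_M$ is genuinely two-valued.
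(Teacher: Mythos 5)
Your proposal is correct and follows essentially the same route as the paper: the paper likewise decomposes $\partial_B\Psi$ blockwise (stated there as a Minkowski sum of sets $D^i$ supported on disjoint coordinate pairs, which is the same as your Cartesian-product factorization) and then reduces to the scalar identity $P_M(s,t)=(s,t)-\partial_B\psi(s,t)$, verified via $x+(-x)_+=x_+$. Your case-by-case check, the treatment of the $s=t\le 0$ boundary, and the final passage from $\partial_B\Psi$ to $\partial\Psi$ followed by applying $P_{C_1}$ simply make explicit what the paper leaves as a direct verification.
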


\begin{proof}
	Denote $w=(u,v)\in \Re^n \times \Re^n$. A direct verification shows that $\partial \Psi_B (w) = \sum _{i=1}^n D^i,$ where the summation denotes the Minkowski sum of sets, and $D^i\subseteq \Re^{n}\times \Re^n$ denotes the set of all $d^i$ such that 
	\[(d^i_j , d^i_{n+j}) \in \begin{cases}
	\{ (0,0)\} & \text{if}~j\neq i \\
	\partial _B \psi (u_i,v_i) & \text{if}~j=i
	\end{cases}, \qquad i=1,2,\dots, n  \]
	with $\partial _B \psi (u_i,v_i)$ given by \eqref{eqn:B-sub_of_psi}. To establish the result, we only need to show that $P_{M}(s,t) = (s,t) - \partial_B \psi (s,t)$, where $P_{M}$ is given by \eqref{eqn:P_C2bar}. This equality can be directly verified by using the fact that $x + (-x)_+ = x_+$ for all $x\in \Re$.
\end{proof}

We next establish one more important property of $\psi$ as defined in \eqref{eqn:new_C-function} which will later be useful in proving our convergence result. 

\begin{lemma}\label{lemma:keyinequality}
	Let $\psi$ be given by \eqref{eqn:new_C-function} and let $(a,b), (s,t) \in \Re^2$. If $(a,b), (s,t) \in K_1$ or $(a,b), (s,t) \in K_2$, where $K_1$ and $K_2$ are given by \eqref{eqn:K1_a>b} and \eqref{eqn:K2_a<b}, respectively, then 
	\begin{multline}\label{eqn:psi_inequality1}
	\psi (s,t) - \psi (a,b)  \geq  \frac{1}{2} \lla \nabla \psi (a,b) , (s-a,t-b) \rla \\ - \frac{\min (a,b)^2}{8}  - \frac{\max (a,b)^2}{8} \mathbbm{1}_{\Re^2_{-}}(\max(a,b),\max(s,t)),
	\end{multline}
	where $\mathbbm{1}_{\Re^{2}_{-}}(c,d)=1$ if $(c,d)\in \Re^2_{-}$ and $0$ otherwise. In particular, if $\psi(s,t)=0$, then 
	\begin{equation}\label{eqn:psi_inequality2}
	2\psi (a,b) \leq \lla \nabla \psi (a,b) , (a-s,b-t) \rla . 
	\end{equation}
	Moreover, 
	\begin{equation}\label{eqn:psi_inequality3}
	2\psi (a,b) \leq \lla  \psi' (a,b), (a,b)\rla \qquad \forall (a,b)\in \Re^2, ~\forall \psi'(a,b)\in \partial_B \psi (a,b).
	\end{equation}
\end{lemma}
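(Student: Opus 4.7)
The plan is to prove all three inequalities by exploiting the separable structure of $\psi$ within each of the convex cones $K_1$ and $K_2$, followed by a completing-the-square argument. Since $\psi(s,t) = \psi(t,s)$ (evident from the $\min/\max$ representation \eqref{eqn:new_C-function}), it suffices to treat the case $(a,b), (s,t) \in K_1$, on which $\psi(x,y) = y^2/2 + (-x)_+^2/2$ and $\nabla \psi(a,b) = (-(-a)_+, b)$ by Proposition~\ref{prop:new_C-function}; the $K_2$ case follows by swapping coordinates.

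For the first inequality, the key observation is that the quantity
\[
\Delta := \psi(s,t) - \psi(a,b) - \tfrac{1}{2}\lla \nabla \psi(a,b), (s-a,t-b) \rla + \tfrac{b^2}{8} + \tfrac{a^2}{8}\mathbbm{1}_{\Re^2_{-}}(a,s)
\]
decouples as $\Delta = T_1 + T_2$, where $T_1$ involves only the ``minimum'' coordinates $b,t$ and $T_2$ only the ``maximum'' coordinates $a,s$. A direct algebraic simplification gives $T_1 = (2t-b)^2/8 \geq 0$. For $T_2 = \tfrac{1}{2}[(-s)_+^2 - (-a)_+^2] + \tfrac{1}{2}(-a)_+(s-a) + \tfrac{a^2}{8}\mathbbm{1}_{\Re^2_{-}}(a,s)$, I would split by sign of $a$: when $a \geq 0$ the $(-a)_+$ terms vanish and $T_2$ is manifestly nonnegative (the indicator contributing nothing extra unless $a=0$, in which case it is still harmless); when $a < 0$ and $s \geq 0$ substituting $(-a)_+ = -a$ reduces $T_2$ to $-as/2 + (a^2/8)\mathbbm{1}$, which is nonnegative; and when both $a,s<0$ the indicator activates and a direct calculation yields $T_2 = (2s-a)^2/8 \geq 0$.

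For the second inequality, observe that $\psi(s,t) = 0$ forces $(s,t)$ onto the nonnegative axes, so in particular $\max(s,t) \geq 0$ and the indicator correction in the first inequality vanishes; restricted to $K_1$ this means $t = 0$ and $s \geq 0$. Plugging these into the $T_1,T_2$ decomposition above, one sees $T_1$ is zero even without the $b^2/8$ correction, and $T_2$ remains nonnegative by the case analysis already carried out. This yields the stronger, correction-free bound $\psi(s,t) - \psi(a,b) \geq \tfrac{1}{2}\lla \nabla \psi(a,b),(s-a,t-b)\rla$, which rearranges to $2\psi(a,b) \leq \lla \nabla \psi(a,b), (a-s,b-t)\rla$ since $\psi(s,t)=0$.

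The third inequality is handled by direct case analysis using the formula for $\partial_B\psi$ given in \eqref{eqn:B-sub_of_psi}. In each smooth case, e.g.\ $a < b$ where $\psi'(a,b) = (a,-(-b)_+)$, one computes $\lla \psi'(a,b),(a,b)\rla - 2\psi(a,b) = -b(-b)_+ - (-b)_+^2 = -(-b)_+[(-b)_+ + b]$, which vanishes for all $b$ (it is $0$ when $b \geq 0$, and $(-b)_+ + b = 0$ when $b<0$); the $a>b$ case is symmetric, and the nonsmooth case $a=b>0$ is checked directly for each of $(a,0)$ and $(0,b)$ in $\partial_B\psi(a,b)$. The main obstacle throughout is bookkeeping rather than ideas: the indicator $\mathbbm{1}_{\Re^2_{-}}$ only activates in a narrow sign regime, and matching the $b^2/8$ and $a^2/8$ correction terms against the residual completions of squares requires handling several subcases cleanly. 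The factor $1/2$ in front of the directional derivative appears precisely to allow these squares to close in the worst case (both coordinates negative).
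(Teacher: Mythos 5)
Your proposal is correct and follows essentially the same route as the paper: reduce to $K_1$ by symmetry, compute $\psi(s,t)-\psi(a,b)-\tfrac12\lla\nabla\psi(a,b),(s-a,t-b)\rla$ explicitly, and bound the residual by completing squares (your $T_1=(2t-b)^2/8$ and $T_2=(2s-a)^2/8$ computations are exactly the paper's bounds $t^2-bt\geq -b^2/4$, $s^2-as\geq -a^2/4$), then deduce \eqref{eqn:psi_inequality2} from the correction-free form of that identity when $\psi(s,t)=0$ and verify \eqref{eqn:psi_inequality3} case by case. The only (harmless) differences are that you re-derive \eqref{eqn:psi_inequality2} directly from the decomposition—which is in fact more careful than the paper's terse "follows from" remark, since the correction terms in \eqref{eqn:psi_inequality1} would otherwise get in the way—and you check \eqref{eqn:psi_inequality3} in all cases rather than invoking \eqref{eqn:psi_inequality2} at $(s,t)=(0,0)$ and checking only $a=b>0$.
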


\begin{proof}
	By symmetry of $\psi$, it suffices to consider the case when $(a,b),(s,t)\in K_1$ to prove \eqref{eqn:psi_inequality1}. By direct computation, we get from \eqref{eqn:new_C-function} and \eqref{eqn:B-sub_of_psi} that 
	\begin{equation*}
	\psi (s,t) - \psi (a,b) - \frac{1}{2} \lla \nabla \psi (a,b) , (s-a,t-b) \rla  = \frac{t^2}{2} - \frac{bt}{2} +  \frac{(-s)_+^2}{2} + \frac{(-a)_+ s}{2} 
	\end{equation*}
	Noting that $t^2-bt\geq -b^2/4$ and $s^2-as \geq -a^2/4$, we get the desired inequality. On the other hand, \eqref{eqn:psi_inequality2} directly follows from \eqref{eqn:psi_inequality2}. Finally, in view of \eqref{eqn:psi_inequality2}, we only need to verify inequality \eqref{eqn:psi_inequality3} for $a=b>0$ which is a routine calculation.  
\end{proof}

\medskip 
We now present our convergence result using the $C$-function \eqref{eqn:new_C-function}. 

\begin{theorem}\label{theorem:local2}
	Let $\{w^k\}_{k=0}^{\infty}$ be any sequence generated by \eqref{eqn:MAP_2}. Suppose $w^*=(u^*,v^*)\in C_1\cap C_2$, and denote 
	\begin{eqnarray}
	I_1^* & := & \{i:u_i^*>v_i^*=0\} , \notag \\
	I_2^* & := & \{i:0=u_i^*<v_i^*\} , \notag 
	\end{eqnarray}
	and let $\Gamma^* := \{ w=(u,v): (u_i,v_i)\in K_i~if~i\in I_i^*~(i=1,2)\}$. If $\ds w^k \in \Gamma^* $ for all sufficiently large $k$, then $\Psi(w^k) \to 0$ as $k\to\infty$. Moreover, there exists a point $\bar{w}\in C_1\cap C_2$ such that $w^k\to \bar{w}$ as $k\to\infty$.  
\end{theorem}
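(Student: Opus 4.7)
The plan is to use $\Psi$ (the merit function induced by the $C$-function \eqref{eqn:new_C-function}) as a Lyapunov function. By Corollary~\ref{cor:MAP_equiv_PGM}, each iterate of \eqref{eqn:MAP_2} can be rewritten as $w^{k+1} = P_{C_1}(w^k - g^k)$ for some $g^k \in \partial_B \Psi(w^k)$, and a coordinatewise inspection of the formulas in \eqref{eqn:B-sub_of_psi} and \eqref{eqn:new_C-function} gives the identity $\|g^k\|^2 = 2\Psi(w^k)$.

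The first step is to derive a Fej\'er-type inequality for all sufficiently large $k$. Setting $y^k := w^k - g^k \in P_{C_2}(w^k)$, firm nonexpansiveness of $P_{C_1}$ at the reference point $w^* \in C_1$ yields
\[
\|w^{k+1} - w^*\|^2 + \|w^{k+1} - y^k\|^2 \leq \|y^k - w^*\|^2 = \|w^k - w^*\|^2 - 2\lla g^k, w^k - w^* \rla + \|g^k\|^2,
\]
so the task reduces to proving the bound $2\Psi(w^k) \leq \lla g^k, w^k - w^* \rla$. I would establish this coordinatewise via Lemma~\ref{lemma:keyinequality}. The hypothesis $w^k \in \Gamma^*$ ensures that for $i \in I_1^*$ the pair $(u_i^k, v_i^k)$ lies in $K_1$, and since $u_i^* > 0 = v_i^*$ we also have $(u_i^*, v_i^*) \in K_1$; inequality \eqref{eqn:psi_inequality2} then yields the required bound on the $i$th coordinate, and the case $i \in I_2^*$ is symmetric with $K_2$ in place of $K_1$. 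For the remaining indices $i \notin I_1^* \cup I_2^*$ one has $(u_i^*, v_i^*) = (0,0)$, and inequality \eqref{eqn:psi_inequality3} applied at $(a,b) = (u_i^k, v_i^k)$ with the subgradient $(g_i^k, g_{n+i}^k) \in \partial_B \psi(u_i^k, v_i^k)$ gives exactly the desired coordinatewise bound, \emph{even} when $(u_i^k, v_i^k)$ lies on the nonsmooth diagonal $u_i^k = v_i^k > 0$. Combining everything with $\|g^k\|^2 = 2\Psi(w^k)$ yields the Fej\'er inequality
\[
\|w^{k+1} - w^*\|^2 \leq \|w^k - w^*\|^2 - 2\Psi(w^k) - \|w^{k+1} - y^k\|^2.
\]

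Telescoping this inequality produces $\sum_k \Psi(w^k) < \infty$, whence $\Psi(w^k) \to 0$, which is the first conclusion. Fej\'er monotonicity with respect to $w^*$ also shows that $\{w^k\}$ is bounded, so it admits a cluster point $\bar{w}$. Continuity of $\Psi$ gives $\Psi(\bar{w}) = 0$, i.e.\ $\bar{w} \in C_2$; and since each $w^{k+1} \in C_1$ (a closed set), $\bar{w} \in C_1 \cap C_2$. To upgrade from cluster point to full convergence, I would invoke the local convergence result Theorem~\ref{theorem:local_general} at $\bar{w}$: there is $\delta > 0$ such that any MAP sequence starting within distance $\delta$ of $\bar{w}$ converges to a solution of \eqref{eqn:AVE_as_a_FP2}. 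Choosing $k_j$ with $\|w^{k_j} - \bar{w}\| < \delta$ and observing that $\{w^k\}_{k \geq k_j}$ is itself a MAP sequence gives $w^k \to \tilde{w}$ for some $\tilde{w} \in C_1 \cap C_2$, as required.

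The main obstacle is applying Lemma~\ref{lemma:keyinequality} componentwise at the indices $i \notin I_1^* \cup I_2^*$, which are not constrained by $\Gamma^*$ and may include the delicate case $u_i^k = v_i^k > 0$ at which $\psi$ is nonsmooth and $P_{C_2}$ is multivalued. The fact that inequality \eqref{eqn:psi_inequality3} is tailored precisely to the comparison with $(s,t) = (0,0)$ and accommodates any element of $\partial_B \psi$ is exactly what is needed to resolve this case.
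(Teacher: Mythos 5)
Your argument is correct, and its core coincides with the paper's proof: rewrite the MAP step via Corollary~\ref{cor:MAP_equiv_PGM} as a projected $B$-subdifferential step $w^{k+1}=P_{C_1}(w^k-g^k)$, bound $\lla g^k, w^k-w^*\rla$ from below by $2\Psi(w^k)$ coordinatewise using \eqref{eqn:psi_inequality2} on $I_1^*\cup I_2^*$ (both points lying in the same $K_j$ thanks to $w^k\in\Gamma^*$) and \eqref{eqn:psi_inequality3} with $(s,t)=(0,0)$ on the remaining indices, combine this with the identity $\|g^k\|^2=2\Psi(w^k)$ and (firm) nonexpansiveness of $P_{C_1}$ to obtain the Fej\'er/descent inequality, and telescope to get $\Psi(w^k)\to0$, boundedness, and a cluster point $\bar w\in C_1\cap C_2$. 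The only genuine divergence is the upgrade from cluster point to convergence of the whole sequence: the paper stays within its own machinery, re-running the same descent inequality with $\bar w$ in place of $w^*$ to get $\|w^{k+1}-\bar w\|\le\|w^k-\bar w\|$ and concluding $w^k\to\bar w$ from the convergent subsequence, whereas you invoke Theorem~\ref{theorem:local_general} at $\bar w$ and restart the iteration at an iterate inside the corresponding $\delta$-ball. Your step is logically sound, since Theorem~\ref{theorem:local_general} is established earlier (via the external result of Dao and Tam) and a tail of a MAP sequence is again a MAP sequence; the limit $\tilde w\in C_1\cap C_2$ you obtain suffices for the statement (and necessarily equals $\bar w$ once full convergence is known). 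The trade-off is that Remark~\ref{remark:implicationofC-fcntheorem} presents Theorem~\ref{theorem:local2} as providing an alternative proof of Theorem~\ref{theorem:local_general}, so importing the latter here sacrifices the self-contained character of Section~\ref{subsec:newCfunction}; the paper's Fej\'er argument with respect to $\bar w$ is precisely what keeps the analysis independent of the Dao--Tam theory.
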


\begin{proof}
	We have from Corollary~\ref{cor:MAP_equiv_PGM}  that $w^k - \Psi '(w^k) \in P_{C_2}(w^k)$ where $\Psi '(w^k) \in \partial_B \Psi (w^k)$. Thus, 
	\begin{eqnarray}
	\|w^{k+1}-w^*\|^2 &  =  & \| P_{C_1}(w^k - \Psi' (w^k)) - P_{C_1}(w^*)\|^2 \notag \\
	& \leq & \|(w^k-w^*)-\Psi' (w^k)\|^2 \notag \\ 
	& = & \|w^k-w^*\|^2 - 2 \lla w^k-w^*, \Psi' (w^k) \rla + \| \Psi' (w^k)\|^2 , \label{eqn:upperbound_w^k-w^*}
	\end{eqnarray}
	where the inequality holds by nonexpansiveness of $P_{C_1}$. Meanwhile, since $w^k, w^*\in \Gamma^*$, then inequalities \eqref{eqn:psi_inequality2} and \eqref{eqn:psi_inequality3} yield
	\begin{eqnarray}
	\lla w^k-w^*, \Psi' (w^k) \rla & = & \lla (u^k-u^*,v^k-v^*), \Psi' (w^k) \rla \notag \\
	& = & \sum _{i\in I_1^* \cup I_2^*} \lla (u_i^k - u_i^*,v_i^k-v_i^*),\nabla \psi (u_i^k,v_i^k) \rla \notag \\
	& & \qquad +  \sum _{i\notin I_1^* \cup I_2^*} \lla (u_i^k,v_i^k), \psi '(u_i^k,v_i^k) \rla , \quad \psi '(u_i^k,v_i^k) \in \partial_B \psi (u_i^k, v_i^k) \notag \\ 
	& \geq & 2\sum_{i=1}^n \psi (u_i^k,v_i^k) \notag \\
	& = & 2\Psi (w^k). \label{eqn:upperbound_Psi(w^k)}
	\end{eqnarray}
	On the other hand, we have
	\begin{equation}\label{eqn:2Psi-DPsi^2}
	\| \Psi '(w^k)\|^2 - 2\Psi (w^k) = \sum _{i=1}^n \left[ \|  \psi' (u_i^k,v_i^k)\|^2 - 2\psi (u_i^k,v_i^k)\right] =0,
	\end{equation}
	where the last equality can be verified directly from \eqref{eqn:new_C-function} and \eqref{eqn:B-sub_of_psi}. Continuing from \eqref{eqn:upperbound_w^k-w^*}, we have 
	\begin{eqnarray}
	\|w^{k+1}-w^*\|^2 & \leq & \|w^k-w^*\|^2 - 4\Psi (w^k) + \|\Psi' (w^k)\|^2 \label{eqn:upperbound_w^k-w^*2}  \\
	& = & \|w^k - w^*\|^2 - 2\Psi (w^k) \label{eqn:finalinequality}
	\end{eqnarray}
	where \eqref{eqn:upperbound_w^k-w^*2} and \eqref{eqn:finalinequality} follow from \eqref{eqn:upperbound_Psi(w^k)} and \eqref{eqn:2Psi-DPsi^2}, respectively. From \eqref{eqn:finalinequality}, we get 
	\begin{eqnarray*}
		2 \sum _{k=1}^N \Psi (w^k) & = & \|w^0 - w^*\|^2 - \|w^{N+1} - w^*\|^2 \leq \| w^0 - w^* \|^2  \qquad \forall N\in \mathbb{N} .
	\end{eqnarray*} 
	Thus, $\Psi (w^k)\to 0$ as $k\to \infty$. This proves the first claim. 
	
	Meanwhile, note that \eqref{eqn:finalinequality} implies that the sequence $\{w^k\}_{k=0}^{\infty}$ is bounded. Thus, the sequence has an accumulation point $\bar{w}$, i.e. there exists a subsequence $\{w^{k_j}\}_{j=1}^{\infty}$ such that $w^{k_j}\to \bar{w}$ as $j\to\infty$. Since $\{w^k\}_{k=0}^{\infty}\subseteq C_1$ and $C_1$ is closed, then $\bar{w}\in C_1$. Moreover, $\Psi(w^{k_j}) \to \Psi (\bar{w})$ as $j\to \infty$ since $\Psi$ is continuous. Since the full sequence $\{\Psi (w^k)\}_{k=0}^{\infty}$ converges to zero, then $\Psi(\bar{w})=0$, that is, $\bar{w}\in C_2$. Hence, we obtain that $\bar{w}\in C_1\cap C_2$ and since $\bar{w}$ must also be a point in the closure of  $\Gamma^*$, then $\bar{w}\in \Gamma^*$. By applying the same argument for $w^*$ as above, we obtain $\|w^{k+1}-\bar{w} \| \leq \|w^k-\bar{w}\|$ as in \eqref{eqn:finalinequality}. Thus, $\{\|w^k-\bar{w}\| \}$ is a decreasing sequence of nonnegative numbers and must therefore be convergent. Since $\|w^{k_j}-\bar{w}\| \to 0$ as $j\to \infty$, then it follows that $\|w^k-\bar{w}\|\to 0$, i.e. $w^k \to \bar{w}$ as $k\to \infty$. This completes the proof. 
\end{proof}

\begin{remark}\label{remark:implicationofC-fcntheorem}
	We observe that the above theorem implies the local convergence result given by Theorem~\ref{theorem:local_general}. In addition, we obtain the global convergence to $C_1\cap C_2$ if $c=0$, which is stronger than the claim of Proposition~\ref{prop:global_c=0}. Hence, the above discussion provides an alternative proof for the aforementioned results in light of the new $C$-function $\psi$.
	
	\medskip 
	
	To see precisely how one gets the local convergence given by Theorem~\ref{theorem:local_general}, let $B(w,\delta)$ denote the open ball centered at $w$ with radius $\delta$. For each $i\in I_1^*$, let $\delta_i>0$ be such that $B((u_i^*,v_i^*),\delta_i) \subseteq \{ (a,b)\in \Re^2 :a>b\}$ and for each $i\in I_2^*$, let $\delta_i>0$ so that $ B((u_i^*,v_i^*),\delta_i) \subseteq \{ (a,b)\in \Re^2:a<b\}$. Taking $\delta := \min \{ \delta_i:i\in I_1^*\cup I_2^*\}$, then $B (w^*,\delta)\subseteq \Gamma^*$. Moreover, for any $w \in \Gamma^*$, note that 
	\begin{eqnarray}
	\|P_{C_2}(w) - P_{C_2}(w^*) \|^2 & = & \sum_{i\in I_1^*\cup I_2^*} \|P_{M}(u_i,v_i) -P_{M}(u_i^*,v_i^*) \|^2 \notag \\
	& & \qquad  + \sum_{i\notin I_1^*\cup I_2^*} \|P_{M}(u_i,v_i) -P_{M}(0,0) \|^2 \notag \\
	& \leq  & \sum_{i\in I_1^*\cup I_2^*} \|(u_i,v_i) -(u_i^*,v_i^*) \|^2 + \sum_{i\notin I_1^*\cup I_2^*} \|P_{M}(u_i,v_i) \|^2 \notag \\
	& \leq  & \sum_{i\in I_1^*\cup I_2^*} \|(u_i,v_i) -(u_i^*,v_i^*) \|^2 + \sum_{i\notin I_1^*\cup I_2^*} \|(u_i,v_i) \|^2 \notag \\	
	& = & \|w-w^*\|^2, \label{eqn:nonexpansiveatw^*,overGamma*}
	\end{eqnarray}
	where $M$ is as defined in the proof of Proposition~\ref{prop:P_C2}. The first inequality above follows from the proof of Corollary~\ref{cor:PC2_nonexpansive}, while the second inequality holds since $\|P_{M}(a,b)\| \leq \|(a,b)\|$ for all $(a,b)\in \Re^2$. It follows from inequality \eqref{eqn:nonexpansiveatw^*,overGamma*} that if $w^0\in B(w^*,\delta)$, then $w^k\in B(w^*,\delta)$ for all $k\geq 0$. Thus, $w^k\in \Gamma^*$ for all $k$ and by Theorem~\ref{theorem:local2}, $w^k$ converges to some $\bar{w}\in C_1\cap C_2$. This is precisely the claim of Theorem~\ref{theorem:local_general}. 
	
	\medskip 
	
	If $c=0$, we see that $w^*=0 \in C_1\cap C_2$ so that $I_1^*=I_2^*=\emptyset$. The above discussion reveals that the MAP iterates will converge to a point in $C_1\cap C_2$ given any initial point $w^0$. $_{\blacksquare }$ 
\end{remark}

\medskip 
We provide a geometric interpretation of Theorem~\ref{theorem:local2} for the case that $I_1^*\cup I_2^* = \{1,\dots,n\}$. In this case, there exists $\tau^*\in \mathscr{T}$ such that $w^*$ is contained in the interior of $S_{\tau^*}$. Theorem~\ref{theorem:local2} indicates that if the iterates will eventually be ``trapped'' in $S_{\tau^*}$, then the iterates must converge to a solution of the feasibility problem \eqref{eqn:AVE_as_a_FP2}. Observe that the above remark implies that this could occur if we choose an initial point $w^0$ that is close enough to $w^*$. However, it is in general difficult to prove this when the initial point is arbitrarily set. Nevertheless, we will prove that for the case $m=n$, it is impossible for the iterates to be eventually trapped in some $S_{\tau}$ that does not contain a point in $C_1\cap C_2$ if we assume nondegeneracy of $Q$ as defined in \eqref{eqn:Qmatrix}. To this end, we need the following lemma.

\medskip 

Given any matrix $A \in \Re ^{m\times n}$, we denote by $\sigma_k(A)$ the $k$th largest singular value of $A$. Moreover, the norm of $A$ is the largest singular value, i.e. $\|A\| =\sigma_1(A)$. If $k>\min \{m,n\}$, we set $\sigma _k (A) = 0$. 

\begin{lemma}\cite[Corollary~3.1.3]{HJ91}\label{lemma:singularvalue_submatrix}
	Let $A\in \Re^{m\times n}$ and let $A_r$ denote a submatrix of $A$ obtained by deleting a total of $r$ rows and/or columns of $A$. Then 
	\[\sigma_k (A)\geq \sigma_k (A_r) \geq \sigma _{k+r}(A), \quad k=1,\dots, \min\{m,n\}.\]
\end{lemma}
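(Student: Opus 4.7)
My plan is to reduce to the single-deletion case and then apply the Courant–Fischer min–max characterization of singular values. Recall that for $A\in \Re^{m\times n}$ and $k\leq \min\{m,n\}$,
\[
\sigma_k(A) \;=\; \max_{\substack{S\subseteq \Re^n \\ \dim S = k}}\; \min_{\substack{x\in S \\ \|x\|=1}} \|Ax\| \;=\; \min_{\substack{S\subseteq \Re^n \\ \dim S = n-k+1}}\; \max_{\substack{x\in S \\ \|x\|=1}} \|Ax\|,
\]
with the convention that $\sigma_k(A)=0$ whenever $k$ exceeds $\min\{m,n\}$.

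First, I would observe that it suffices to prove the inequalities for $r=1$, i.e.\ when exactly one row or one column is removed; the general case then follows by iterating the one-step bound $r$ times, shifting the upper index by one at each step. Second, by singular-value invariance under transposition ($\sigma_k(A)=\sigma_k(A^{\T})$ and $\sigma_k(A_r)=\sigma_k(A_r^{\T})$), the row-deletion case reduces to the column-deletion case applied to $A^{\T}$. So the whole proof reduces to: if $A_1$ is obtained from $A$ by deleting one column (say column $j$), show
\[
\sigma_k(A) \;\geq\; \sigma_k(A_1) \;\geq\; \sigma_{k+1}(A).
\]

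For the upper bound, I would use the max–min form. View $\Re^{n-1}$ as the subspace $E_j:=\{x\in\Re^n : x_j=0\}$; under this identification $A_1 x' = A\iota(x')$ where $\iota:\Re^{n-1}\to E_j$ is the natural embedding, and $\|A_1x'\|=\|A\iota(x')\|$. Every $k$-dimensional subspace $S\subseteq \Re^{n-1}$ therefore lifts to a $k$-dimensional subspace $\iota(S)\subseteq E_j\subseteq \Re^n$, so the max defining $\sigma_k(A_1)$ is over a strictly smaller family than the max defining $\sigma_k(A)$, yielding $\sigma_k(A_1)\leq \sigma_k(A)$. For the lower bound, I would use the min–max form: $\sigma_{k+1}(A)$ is the minimum of $\max_{x\in S, \|x\|=1}\|Ax\|$ over all $(n-k)$-dimensional $S\subseteq \Re^n$, and $\sigma_k(A_1)$ is the same minimum but restricted to those $(n-k)$-dimensional subspaces that lie in $E_j$. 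Restricting the feasible set in a minimization can only increase the value, so $\sigma_k(A_1)\geq \sigma_{k+1}(A)$.

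The main obstacle — which is really only a bookkeeping issue — is making the dimension counts line up cleanly when rows are deleted instead of columns, since then $\|Ax\|$ changes rather than the ambient domain. The transposition trick avoids this entirely and is the cleanest path. Finally, to pass from $r=1$ to general $r$, I would induct on $r$: if $A_{r}$ is obtained from $A_{r-1}$ by one additional row/column deletion, the base case gives $\sigma_k(A_{r-1})\geq \sigma_k(A_r)\geq \sigma_{k+1}(A_{r-1})$, and chaining with the inductive hypothesis $\sigma_k(A)\geq \sigma_k(A_{r-1})\geq \sigma_{k+r-1}(A)$ (together with $\sigma_{k+1}(A_{r-1})\geq \sigma_{k+r}(A)$ applied with index $k+1$) delivers the stated sandwich $\sigma_k(A)\geq \sigma_k(A_r)\geq \sigma_{k+r}(A)$.
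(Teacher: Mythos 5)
Your argument is correct, and it is essentially the standard one: the paper does not prove this lemma at all but simply cites it from Horn and Johnson (Corollary~3.1.3), whose derivation likewise rests on the Courant--Fischer min--max characterization and one-deletion interlacing, so your reduction to a single column deletion (via transposition for rows) followed by induction on $r$ reproduces the textbook route in a self-contained way. The only point worth flagging is the degenerate bookkeeping when a deletion drops $\min\{m,n\}$ below the index in play (e.g.\ $k=n\leq m$ and a column is removed, or $k+1>\min\{m,n\}$ in the induction step); this is covered by the convention $\sigma_k=0$ for out-of-range indices, which both you and the paper state, so no gap results.
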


\medskip 
\begin{lemma}\label{lemma:property_Q2}
	Suppose that $Q$ given by \eqref{eqn:Qmatrix} is nondegenerate. Let $\Lambda := \Lambda_1 \cup \Lambda_2$ where $\Lambda_1\subseteq \{1,\dots, n\}$ and $\Lambda_2=\{ n+i : i \notin \Lambda_1 \}$. If $L_{\cdot \Lambda}$ is the submatrix of $L:=I_{2n}-T^{\dagger}T$ containing all its columns indexed by $\Lambda$ and all of its $2n$ rows, then $\|L_{\cdot \Lambda}\|<1$. 
\end{lemma}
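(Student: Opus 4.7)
The plan is to recognize $L$ as the orthogonal projector onto $\ker(T)$ and then reduce the strict inequality $\|L_{\cdot\Lambda}\|<1$ to a transversality statement that will follow from the nondegeneracy of $Q$.

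First, I would note that $T^{\dagger}T$ is the orthogonal projector onto $\mathrm{range}(T^{\T}) = \ker(T)^{\perp}$, so $L = I_{2n} - T^{\dagger}T$ is the orthogonal projector onto $\ker(T)$. In particular $\|L\|\leq 1$, and for any $\tilde{y}\in\Re^{2n}$ the Pythagorean identity $\|\tilde{y}\|^{2} = \|L\tilde{y}\|^{2} + \|(I-L)\tilde{y}\|^{2}$ shows that $\|L\tilde{y}\|=\|\tilde{y}\|$ forces $(I-L)\tilde{y}=0$, i.e.\ $\tilde{y}\in\ker(T)$.

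Next, by embedding $\Re^{|\Lambda|}$ into $\Re^{2n}$ via padding with zeros outside $\Lambda$, one has
\[
\|L_{\cdot\Lambda}\| \;=\; \sup\bigl\{\|L\tilde{y}\| \,:\, \|\tilde{y}\|=1,\ \mathrm{supp}(\tilde{y})\subseteq\Lambda\bigr\},
\]
and by compactness the supremum is attained. I would then argue by contradiction: assuming $\|L_{\cdot\Lambda}\|=1$, there exists a unit vector $\tilde{y}$ with $\mathrm{supp}(\tilde{y})\subseteq\Lambda$ and $L\tilde{y}=\tilde{y}$, producing a nonzero element of $\ker(T)$ supported on $\Lambda$.

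Finally, I would rule out such a $\tilde{y}$ using nondegeneracy of $Q$. Since $A-B$ is nonsingular (so that $Q$ is well defined), the kernel admits the parametrization $\ker(T) = \{(u,Pu) : u\in\Re^{n}\}$ with $P:=(A-B)^{-1}(A+B)$, and a direct transpose computation gives $P = Q^{\T}$. A vector $(u,Pu)$ is supported on $\Lambda = \Lambda_{1}\cup\{n+i : i\notin\Lambda_{1}\}$ iff $u$ is supported on $\Lambda_{1}$ and $(Pu)_{i}=0$ for $i\in\Lambda_{1}$, which reduces to $P_{\Lambda_{1}\Lambda_{1}}\,u_{\Lambda_{1}} = 0$. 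Because principal submatrices of $Q^{\T}$ are transposes of principal submatrices of $Q$, they have the same determinant, so $P$ inherits nondegeneracy from $Q$; in particular $P_{\Lambda_{1}\Lambda_{1}}$ is nonsingular and $u_{\Lambda_{1}}=0$, forcing $\tilde{y}=0$, a contradiction.

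The main obstacle is conceptual rather than computational: one must recognize that the desired strict inequality is precisely the transversality of $\ker(T)$ with the coordinate subspace indexed by $\Lambda$. Note that applying Lemma~\ref{lemma:singularvalue_submatrix} alone would only yield the non-strict bound $\|L_{\cdot\Lambda}\|\leq \sigma_{1}(L)=1$; the strict inequality genuinely requires the nondegeneracy hypothesis on $Q$, which is used to exclude fixed points of $L$ in the coordinate subspace.
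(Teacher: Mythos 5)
Your proof is correct, but it takes a genuinely different route from the paper's. The paper argues computationally: it conjugates by the unitary rearrangement matrix $E$, writes $\tilde{T}=TE=[\,U~~V\,]$, derives the explicit block form $L_{\cdot\Lambda}=\bigl[\begin{smallmatrix} I_n-U^\T WU \\ -V^\T WU\end{smallmatrix}\bigr]$ with $W=(TT^\T)^{-1}$, uses the identity $U^\T WVV^\T WU=U^\T WU-(U^\T WU)^2$ to get $\|L_{\cdot\Lambda}\|^2=\lambda_{\max}(I_n-U^\T WU)$, excludes the eigenvalue $1$ because Proposition~\ref{lemma:Pnondegenerate_relationwithD} (via the column-switching Lemma~\ref{lemma:switchingcolumns_nonsingular}) makes $U$ nonsingular, and then invokes the interlacing Lemma~\ref{lemma:singularvalue_submatrix} for the bound $\|L_{\cdot\Lambda}\|\leq\|L\|=1$. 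You instead exploit that $L$ is the orthogonal projector onto $\Ker(T)$, so that $\|L_{\cdot\Lambda}\|\leq 1$ is automatic and, by compactness and the equality case of the projector inequality, $\|L_{\cdot\Lambda}\|=1$ would force a nonzero vector of $\Ker(T)$ supported on $\Lambda$; you then kill such a vector directly from the parametrization $\Ker(T)=\{(u,Q^\T u)\}$ (the dual of Lemma~\ref{lemma:property_Q}) and nonsingularity of the principal submatrix $Q^\T_{\Lambda_1\Lambda_1}$. The two arguments rest on the same nondegeneracy fact --- transversality of $\Ker(T)$ with the coordinate subspace indexed by $\Lambda$, equivalently nonsingularity of the selected column submatrix of $T$ --- but your route is shorter and more conceptual, needing neither the $E$-matrix computation, the matrix identity, nor Lemma~\ref{lemma:singularvalue_submatrix}, and it re-derives the needed consequence of nondegeneracy without passing through Proposition~\ref{lemma:Pnondegenerate_relationwithD}. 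What the paper's computation buys in exchange is the explicit expression $\|L_{\cdot\Lambda}\|^2=\lambda_{\max}(I_n-U^\T WU)$, i.e.\ a quantitative handle on the contraction factor that governs the linear rate in Theorem~\ref{theorem:linearrate}, whereas your argument certifies only that the norm is strictly below $1$.
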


\begin{proof}
	Let $E_1\in \Re^{n\times n}$ such that the first $|\Lambda_1|$ columns of $E_1$ are the standard unit vectors $e_i\in \Re^n$ with $i\in \Lambda_1$, while the other remaining columns are zeros. In addition, let $E_2\in \Re^{n\times n}$ be such that the first $|\Lambda_1|$ columns are zeros and the last $|\Lambda_2|$ columns are composed of $e_i$'s where $i\notin \Lambda_1$. Further, let $E:= \begin{bmatrix}
	E_1 & E_2 \\
	E_2 & E_1
	\end{bmatrix}
	$. We note that 
	\begin{equation}\label{eqn:identities_Ematrices}
	E_1E_1^\T + E_2E_2^\T = I_n, \qquad E_i E_j^\T=0 ~(\forall i\neq j) \qquad \text{and} \qquad EE^\T = E^\T E = I_{2n}.
	\end{equation}
	Then the matrix $L_{\cdot \Lambda}$ is precisely the submatrix of $\tilde{L}:= E^\T ME$ containing all its rows and its first $n$ columns. Meanwhile, using the identities \eqref{eqn:identities_Ematrices}, it can be verified that the matrix $\tilde{L}$ is also equal to $ I_{2n} - \tilde{T}^{\dagger}\tilde{T}$ where $\tilde{T}:=TE = [~U~~V~]$,  $U:=T \left[ \begin{array}{c}
	E_1 \\ E_2 
	\end{array} \right]$ and $V:=T \left[ \begin{array}{c}
	E_2 \\ E_1 
	\end{array} \right]$. Calculating $\tilde{L}$ using this formula, we see that 
	\[L_{\cdot \Lambda} = \left[ \begin{array}{c}
	I_n - U^\T W U \\ -V^\T W U
	\end{array} \right],\]
	where $W=(TT^\T )^{-1}$. Noting that $UU^\T + VV^\T = W^{-1}$, which can be derived from \eqref{eqn:identities_Ematrices}, we obtain 
	\begin{equation}\label{eqn:identity_UWU}
	U^\T W V V^\T W U = U^\T W U - (U^\T W U )^2.
	\end{equation}
	Then
	\begin{eqnarray*}
		\|L_{\cdot \Lambda}\|^2 & = & \lambda _{\max} (L_{\cdot \Lambda}^\T L_{\cdot \Lambda}) \\
		& = & \lambda _{\max} \left(  (I_n - U^\T W U )^2 + U^\T W V V^\T W U  \right) \\
		& = & \lambda_{\max} (I_n - U^\T W U)
	\end{eqnarray*}
	where the last equality follows from \eqref{eqn:identity_UWU}. Meanwhile, we know from the definition of $U$ that it is composed of the columns of $T$ which are indexed by $\Lambda$. By Lemma~\ref{lemma:Pnondegenerate_relationwithD}, these columns must be linearly independent so that $U$ is nonsingular and $1$ is not an eigenvalue of $I_n - U^\T W U$. We conclude that $\|L_{\cdot \Lambda}\| \neq 1$. But by Lemma~\ref{lemma:singularvalue_submatrix}, we know that $\|L_{\cdot \Lambda}\|\leq \|L\| =  1$. Hence, we arrive at the desired conclusion. 
\end{proof}

\medskip

Using the above lemma, we obtain the following proposition.

\begin{proposition}\label{prop:impossible_to_be_trapped_in_wrong_region}
	Let $\{w^k\}_{k=0}^{\infty}$ be any sequence generated by \eqref{eqn:MAP_2}, and suppose there exists $\tau \in \mathscr{T}$ such that $S_{\tau}\cap (C_1\cap C_2)=\emptyset$, i.e. $S_{\tau}$ does not contain a solution of \eqref{eqn:AVE_as_a_FP2}. Then there does not exist $N\in \mathbb{N}$ such that $\{w^k\}_{k=N}^{\infty} \subseteq S_{\tau}\cap \Omega$, where $\Omega$ is as defined in Theorem~\ref{theorem:fixedpoints_arbitrary}. 
\end{proposition}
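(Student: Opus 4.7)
The plan is to argue by contradiction: assume $w^k \in S_\tau \cap \Omega$ for all $k \geq N$, and show that the nondegeneracy of $Q$ in force throughout this subsection forces the iterates to converge to a point in $S_\tau \cap (C_1 \cap C_2)$, contradicting the hypothesis. Set $\Lambda_1 := \{ i : \tau(i) = 1\}$ and $\Lambda := \Lambda_1 \cup \{ n+i : i \notin \Lambda_1\}$, and let $z(w) \in \Re^{2n}$ denote the vector that agrees with $w$ on coordinates indexed by $\Lambda$ and vanishes elsewhere. Inspecting Proposition~\ref{prop:P_C2} case by case and using $w \in \Omega$ to rule out the $(\cdot)_+$ truncation, one checks that $z(w) \in P_{C_2}(w)$ whenever $w \in S_\tau \cap \Omega$. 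Selecting this branch in the MAP recursion and applying Proposition~\ref{prop:P_C1} reduces the dynamics in the trapped regime to the affine recursion
\begin{equation*}
    w^{k+1} = L\, z(w^k) + \sqrt{2}\, T^\dagger c, \qquad L := I_{2n} - T^\dagger T, \qquad k \geq N.
\end{equation*}
Since $z(w^k)$ is supported on $\Lambda$, restricting to the $\Lambda$-indexed coordinates yields $w^{k+1}_\Lambda = L_{\Lambda\Lambda}\, w^k_\Lambda + \sqrt{2}\,(T^\dagger c)_\Lambda$.

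The main obstacle is certifying that this is a strict contraction. The key input is Lemma~\ref{lemma:property_Q2}, which gives $\|L_{\cdot \Lambda}\| < 1$; combined with the singular-value interlacing of Lemma~\ref{lemma:singularvalue_submatrix}, this forces $\|L_{\Lambda\Lambda}\| \leq \|L_{\cdot \Lambda}\| < 1$. Banach's fixed-point theorem then produces a unique $y^*$ with $w^k_\Lambda \to y^*$ linearly, and feeding this back into the full recursion yields $w^k \to w^* := L_{\cdot \Lambda}\, y^* + \sqrt{2}\, T^\dagger c$.

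It remains to identify the limit. Closedness of $\Omega$ gives $w^* \in \Omega$. Continuity of $P_{C_1}$ together with the linearity of $z$ yields $w^* = P_{C_1}(z(w^*))$, and a short case check on each index (the only subtle case being $u^*_i = v^*_i$, where $P_{C_2}(w^*)$ is multivalued but still contains $z(w^*)$) shows $z(w^*) \in P_{C_2}(w^*)$. Hence $w^* \in \Fix(P_{C_1} \circ P_{C_2}) \cap \Omega$, and nondegeneracy of $Q$, via Proposition~\ref{lemma:Pnondegenerate_relationwithD} and Theorem~\ref{theorem:fixedpoints_arbitrary}, forces $w^* \in C_1 \cap C_2$. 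Finally, $w^* \in C_2$ rules out the configuration $u^*_i = v^*_i > 0$, which was the only way $w^* \in \overline{S_\tau}$ could fail to lie in $S_\tau$; hence $w^* \in S_\tau \cap (C_1 \cap C_2)$, which is the desired contradiction.
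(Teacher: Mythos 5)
Your proposal is correct and follows essentially the same route as the paper: argue by contradiction, observe that on $S_\tau\cap\Omega$ the projection onto $C_2$ keeps exactly the $\Lambda$-indexed coordinates, reduce the trapped dynamics to an affine recursion whose contraction factor is controlled by Lemma~\ref{lemma:property_Q2} (with Lemma~\ref{lemma:singularvalue_submatrix}), and then identify the limit as a fixed point in $\Omega$, forcing it into $S_\tau\cap(C_1\cap C_2)$ via the nondegeneracy-based characterization. The only difference is presentational: the paper performs an explicit coordinate swap with a unitary matrix $E$ and contracts with the block $(L_{\cdot\Lambda})_1$, whereas you work directly with the selection $z(w)$ and the principal submatrix $L_{\Lambda\Lambda}$, and you spell out the boundary check $z(w^*)\in P_{C_2}(w^*)$ a bit more carefully.
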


\begin{proof}
	Suppose to the contrary that there exists $N$ such that $w^k \in S_{\tau} \cap \Omega$ for all $k\geq N$. To prove the result, we will show that $w^k$ converges to some point $w^*\in S_{\tau}\cap (C_1 \cap C_2)$ which is a contradiction to our hypothesis. To this end, we apply a convenient change of variables based on $\tau$. First, let $\Lambda_1 := \{ i:\tau (i)=1\}$ and $\Lambda_2:= \{ n+i : \tau (i)=2\}$. With these index sets, define the matrices $E$, $L_{\cdot \Lambda}$, $\tilde{L}$ and $\tilde{T}$ be as in the proof of Lemma~\ref{lemma:property_Q2}. 
	
	We consider the transformation $w = E\tilde{w}$. Similar to the discussion in Section \ref{subsec:changeofvariables}, we see that $w\in C_i$ if and only if $\tilde{w}\in \tilde{C}_i$ where $\tilde{C}_1 := \{ \tilde{w}: \tilde{T}\tilde{w} = \sqrt{2}c\}$ and $\tilde{C}_2=C_2$. Moreover, 
	\begin{equation}\label{eqn:Stau_switched}
	w\in S_{\tau }\cap \Omega \qquad \Longleftrightarrow \qquad \tilde{u}_i > \tilde{v}_i ~\text{and}~\tilde{u}_i\geq 0 \quad \forall i=1,\dots, n.
	\end{equation}
	Analogous to equations \eqref{eqn:StoC} and \eqref{eqn:fixedpoints_transformed}, we have 
	\begin{equation*}\label{eqn:PC1PC2_to_tilde}
	(P_{C_1}\circ P_{C_2})(w)  = E(P_{\tilde{C}_1}\circ P_{\tilde{C}_2} ) (\tilde{w}),
	\end{equation*}
	and
	\begin{equation}\label{eqn:fixedpoints_transformed2}
	\Fix (P_{C_1}\circ P_{C_2}) = E \left( \Fix (P_{\tilde{C}_1}\circ P_{\tilde{C}_2}) \right) 
	\end{equation}
	since $E$ is unitary. 
	\medskip 
	We now look at the transformed iterates $\tilde{w}^k = E^{\T}w^k = (\tilde{u}_k, \tilde{v}_k)$. By \eqref{eqn:Stau_switched}, we have $\tilde{u}_k > \tilde{v}_k$ and $\tilde{u}_k\geq 0$ for all $k\geq N$ so that 
	\begin{eqnarray}
	\tilde{w}^{k+1} & = & (P_{\tilde{C}_1}\circ P_{\tilde{C}_2} )(\tilde{w}^k) \label{eqn:iterates_switchedcoordinates}\\
	& = & P_{\tilde{C}_1}\left( (\tilde{u}^k, 0)\right) \label{eqn:PC2tilde}\\
	& = & \tilde{L} (\tilde{u}^k,0)^{\T} + \sqrt{2}\tilde{T}^{\dagger}c \label{eqn:PC1tilde}\\
	& = & L_{\cdot \Lambda} \tilde{u}^k + \sqrt{2}\tilde{T}^{\dagger}c \qquad \forall k\geq N, \notag 
	\end{eqnarray}
	where \eqref{eqn:PC2tilde} and \eqref{eqn:PC1tilde} follow from Propositions \ref{prop:P_C2} and \ref{prop:P_C1}, respectively. Letting $L_{\cdot \Lambda} = \begin{bmatrix} 
	(L_{\cdot \Lambda})_1 \\ (L_{\cdot \Lambda})_2 
	\end{bmatrix}$ and $\sqrt{2}\tilde{T}^{\dagger}c = \begin{bmatrix} 
	d_1 \\ d_2 
	\end{bmatrix}$ where $(L_{\cdot \Lambda})_1,(L_{\cdot \Lambda})_2\in \Re^{n\times n}$ and $d_1,d_2 \in \Re^n$, then we see that $\tilde{u}^{k+1} = (L_{\cdot \Lambda})_1 \tilde{u}^k + d_1$ and $\tilde{v}^{k+1} = (L_{\cdot \Lambda})_2 \tilde{u}^k + d_2$. Since $\|L_{\cdot \Lambda}\|<1$ by Lemma~\ref{lemma:property_Q2}, we also have by Lemma~\ref{lemma:singularvalue_submatrix} that $\|(L_{\cdot \Lambda})_1\|<1$ so that $\{\tilde{u}^k\}_{k=0}^{\infty}$ is convergent. Consequently, $\{\tilde{v}^k\}_{k=0}^{\infty}$ is also convergent. 
	
	Therefore, there exists $\tilde{w}^*$ such that $\tilde{w}^k\to \tilde{w}^*$ as $k\to \infty$. Moreover, we have from \eqref{eqn:iterates_switchedcoordinates} that $\tilde{w}^* = (P_{\tilde{C}_1}\circ P_{\tilde{C}_2} )(\tilde{w}^*)$, i.e. $\tilde{w}^* \in \Fix (P_{\tilde{C}_1}\circ P_{\tilde{C}_2})$. Since $w^k = E\tilde{w}^k$, it also follows that $w^k \to w^* := E\tilde{w}^*$ and from \eqref{eqn:fixedpoints_transformed2}, $w^* \in \Fix (P_{C_1}\circ P_{C_2})$. Since $\Omega$ is closed, it also follows that $w^* \in \Omega$. From Theorem~\ref{theorem:fixedpoints}, we must have $w^* \in C_1\cap C_2$. However, since $w^*$ must belong to the closure of $S_{\tau}$, the fact that it is in $C_1\cap C_2$ implies that $w^* \in S_{\tau}$. Hence, $w^* \in S_{\tau} \cap (C_1\cap C_2)$. This is a contradiction. 
\end{proof}

\subsection{Rate of Convergence}\label{subsec:rateofconvergence}

An immediate consequence of Lemma~\ref{lemma:property_Q2} is the linear rate of convergence of MAP iterates \eqref{eqn:MAP_2}. 
\medskip 

\begin{theorem}\label{theorem:linearrate}  
	Let $m=n$ and suppose that $w^* \in C_1 \cap C_2$ such that $(u_i^*,v_i^*) \neq (0,0)$ for all $i=1,\dots, n$. If $Q$ given by \eqref{eqn:Qmatrix} is nondegenerate, then there exists sufficiently small $\delta >0$ such that for any $w^0$ with $\|w^0-w^*\|<\delta$, the sequence $\{w^k\}_{k=0}^{\infty}$ generated by \eqref{eqn:MAP_2} converges linearly to $w^*$. 
\end{theorem}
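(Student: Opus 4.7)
The plan is to exploit the hypothesis $(u_i^*,v_i^*)\neq(0,0)$ for all $i$ to pin down a single region $S_{\tau^*}$ in whose interior $w^*$ sits, show that iterates started near $w^*$ remain trapped inside that region, and then reduce the MAP recursion to an affine contraction whose rate is controlled by $\|L_{\cdot\Lambda}\|<1$ from Lemma~\ref{lemma:property_Q2}.

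First I would observe that since $w^*\in C_2$ and no pair $(u_i^*,v_i^*)$ vanishes, for each $i$ exactly one of $u_i^*>0$ or $v_i^*>0$ holds. This determines a unique $\tau^*\in\mathscr{T}$ (put $\tau^*(i)=1$ if $u_i^*>0$, else $\tau^*(i)=2$) with $w^*$ in the interior of $S_{\tau^*}$. In the notation of Theorem~\ref{theorem:local2} this means $I_1^*\cup I_2^*=\{1,\dots,n\}$, so the set $\Gamma^*$ coincides with $S_{\tau^*}$ and contains an open ball around $w^*$. Next I would invoke the forward-invariance argument from the remark after Theorem~\ref{theorem:local2}: inequality \eqref{eqn:nonexpansiveatw^*,overGamma*} combined with nonexpansiveness of $P_{C_1}$ and the identity $P_{C_2}(w^*)=w^*$ yields $\|w^{k+1}-w^*\|\le\|w^k-w^*\|$ for every iterate that lies in $\Gamma^*$. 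Choosing $\delta$ smaller than the distance from $w^*$ to $\partial S_{\tau^*}$ therefore guarantees $w^k\in S_{\tau^*}\cap\Omega$ for all $k$, and in fact $w^k$ stays away from the diagonal $\{u_i=v_i>0\}$, so $P_{C_2}(w^k)$ is single-valued throughout.

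Once trapped in $S_{\tau^*}$, I would reuse verbatim the change of variables used in the proof of Proposition~\ref{prop:impossible_to_be_trapped_in_wrong_region}: set $\Lambda_1=\{i:\tau^*(i)=1\}$, $\Lambda_2=\{n+i:\tau^*(i)=2\}$, $\Lambda=\Lambda_1\cup\Lambda_2$, and introduce the unitary $E$, the matrix $\tilde T=TE$, and $\tilde w^k=E^\T w^k=(\tilde u^k,\tilde v^k)$. Within the interior of $S_{\tau^*}$ one has $\tilde u^k>0$ and $\tilde u^k>\tilde v^k$ componentwise, so $P_{\tilde C_2}(\tilde w^k)=(\tilde u^k,0)$, and the recursion collapses to the affine iteration
\begin{equation*}
\tilde u^{k+1}=(L_{\cdot\Lambda})_1\,\tilde u^k+d_1,\qquad \tilde v^{k+1}=(L_{\cdot\Lambda})_2\,\tilde u^k+d_2,
\end{equation*}
exactly as derived there. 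By Lemma~\ref{lemma:property_Q2}, $\|L_{\cdot\Lambda}\|<1$, and Lemma~\ref{lemma:singularvalue_submatrix} then gives $\|(L_{\cdot\Lambda})_1\|\le\|L_{\cdot\Lambda}\|<1$. Hence $\tilde u^k$ is a contractive affine map and converges Q-linearly to its unique fixed point; Q-linear convergence of $\tilde v^k$ is immediate from the second equation, and therefore $w^k=E\tilde w^k$ converges linearly to some $\bar w\in\Fix(P_{C_1}\circ P_{C_2})\cap\Omega$. By Theorem~\ref{theorem:fixedpoints} this limit lies in $C_1\cap C_2$, and by the contractive rate it must coincide with $w^*$ since $\bar w$ sits in the closure of the ball around $w^*$.

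The main obstacle I foresee is the forward-invariance step: one must guarantee that the nonsmoothness of $P_{C_2}$ never activates, i.e. that the iterates do not cross a boundary between distinct regions $S_{\tau}$ before the linear recursion takes over. Everything else — collapsing to the affine iteration and reading off the rate — is essentially a direct citation of Lemma~\ref{lemma:property_Q2}, which already did the spectral work.
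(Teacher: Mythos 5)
Your proposal is correct in substance but ends differently from the paper's own proof, and the comparison is instructive. The paper also begins with the same forward-invariance step (the ball $B(w^*,\delta)$ inside the interior of $S_{\tau^*}=\Gamma^*$ is preserved by the iteration, via Corollary~\ref{cor:PC2_nonexpansive} and nonexpansiveness of $P_{C_1}$), but then it first invokes Theorem~\ref{theorem:local2} to get convergence to \emph{some} $\bar w\in C_1\cap C_2$, identifies $\bar w=w^*$ by citing the LCP equivalence of Remark~\ref{remark:AVE_equiv_LCP} together with the isolatedness of solutions for nondegenerate $Q$ (\cite[Theorem~3.6.3]{CPS92}), and only afterwards derives the rate directly: since $P_{C_2}$ is single-valued along the trajectory with $\supp(P_{C_2}(w^k))=\Lambda=\supp(w^*)$, it gets $\|w^{k+1}-w^*\|\le\|L_{\cdot\Lambda}\|\,\|w^k-w^*\|$ from Proposition~\ref{prop:P_C1} and Lemma~\ref{lemma:property_Q2}. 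You instead recycle the change of variables from Proposition~\ref{prop:impossible_to_be_trapped_in_wrong_region} to collapse MAP to the affine recursion $\tilde u^{k+1}=(L_{\cdot\Lambda})_1\tilde u^k+d_1$, $\tilde v^{k+1}=(L_{\cdot\Lambda})_2\tilde u^k+d_2$, which delivers convergence and the linear rate in one stroke; what this route buys is that the external LCP citation becomes unnecessary, since uniqueness of the fixed point of a contraction can replace the isolatedness argument.

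There is, however, one step whose stated justification does not work: you conclude that the limit equals $w^*$ ``by the contractive rate \ldots since $\bar w$ sits in the closure of the ball around $w^*$.'' Membership in the closed ball does not force equality, and the detour through $\Fix(P_{C_1}\circ P_{C_2})\cap\Omega$ and Theorem~\ref{theorem:fixedpoints} does not identify the limit either. The repair is immediate within your own setup: $w^*\in C_1\cap C_2$ is a fixed point of $P_{C_1}\circ P_{C_2}$ lying in the open region where $P_{C_2}$ is single-valued, so $\tilde u^*=(L_{\cdot\Lambda})_1\tilde u^*+d_1$ and $\tilde v^*=(L_{\cdot\Lambda})_2\tilde u^*+d_2$; since $\|(L_{\cdot\Lambda})_1\|\le\|L_{\cdot\Lambda}\|<1$, $\tilde u^*$ is the unique fixed point of the contraction, hence $\tilde w^k\to\tilde w^*$ and $w^k\to w^*$ (indeed $\tilde w^{k+1}-\tilde w^*=L_{\cdot\Lambda}(\tilde u^k-\tilde u^*)$ gives the Q-linear estimate with factor $\|L_{\cdot\Lambda}\|$, matching the paper). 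A second, minor imprecision: the interior of $S_{\tau^*}$ only enforces $\tilde u_i>\tilde v_i$, not $\tilde u_i>0$, so ``$\delta$ smaller than the distance to $\partial S_{\tau^*}$'' is not quite enough for the formula $P_{\tilde C_2}(\tilde w^k)=(\tilde u^k,0)$; you should shrink $\delta$ below $\min_i\max(u_i^*,v_i^*)>0$ as well, so that the ball lies in the open set where $\tilde u_i>\tilde v_i$ and $\tilde u_i>0$ for all $i$, which also guarantees $w^k\in\Omega$ and single-valuedness of $P_{C_2}$.
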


In the following, we denote by $\supp (w) := \{ i:w_i\neq 0\}$ the support of a vector $w$. 

\begin{proof}
	Let $\tau^* \in \mathscr{T}$ such that $w^* \in S_{\tau^*}$. Observe that since $(u_i^*,v_i^*) \neq (0,0)$ for all $i=1,\dots, n$, then $\Gamma^*$ defined in Theorem~\ref{theorem:local2} is precisely the set $S_{\tau^*}$. Choose $\delta>0$ sufficiently small so that the closure of 
	$B(w^*,\delta) $ is contained in the interior of $\Gamma^*=S_{\tau^*}$. From the discussion in Remark~\ref{remark:implicationofC-fcntheorem}, we have $\{w^k\}_{k=0}^{\infty} \subseteq B(w^*,\delta)$ whenever $w^0 \in B(w^*,\delta)$. Moreover, there exists $\bar{w} \in C_1\cap C_2$ in the interior of $S_{\tau^*}$ such that $w^k\to \bar{w}$ as $k\to\infty$ for any $w^0 \in B(w^*,\delta)$. Thus, $(\bar{u}_i,\bar{v}_i)\neq (0,0)$ for all $i=1,\dots, n$. Meanwhile, in view of the equivalence of AVE and the LCP described in Remark~\ref{remark:AVE_equiv_LCP} together with~\cite[Theorem~3.6.3]{CPS92}, the nondegeneracy assumption on $Q$ implies that $w^*$ is an isolated solution of the feasibility problem \eqref{eqn:AVE_as_a_FP2}. Thus, by choosing a smaller $\delta$ (if necessary), we have that $\bar{w}=w^*$. That is, $w^k \to w^*$ for all $w^0 \in B(w^*,\delta)$. 
	
	Denote by $\Lambda$ the support of $w^*$. Since $\{w^k\}_{k=0}^{\infty}$ is contained in the interior of $S_{\tau^*}$ and $w^k\to w^*$, then we have by Proposition~\ref{prop:P_C2} that $P_{C_2}$ is single-valued at $w^k$ and $\supp (P_{C_2}(w^k)) = \Lambda$ for all $k\geq 0$. Thus,
	\[\| (P_{C_2}(w^k) - P_{C_2}(w^*))_{\Lambda} \| = \| P_{C_2}(w^k) - P_{C_2}(w^*) \| \leq \| w^k - w^*\|, \]
	where the inequality holds by nonexpansiveness of $P_{C_2}$ on $S_{\tau^*}$ (Corollary~\ref{cor:PC2_nonexpansive}). Then if $L_{\cdot \Lambda}$ denotes the submatrix of $L:=I_{2n}-T^{\dagger}T$ containing all of its $2n$ rows and all its columns indexed by $\Lambda$, we have from Proposition~\ref{prop:P_C1} that 
	\begin{eqnarray}
	\| w^{k+1} - w^*\| & = &  \| P_{C_1}(P_{C_2}(w^k)) - P_{C_1}(P_{C_2}(\bar{w}))\| \notag \\
	& =  & \| L(P_{C_2}(w^k)) - L(P_{C_2}(w^*))\| \notag \\
	& = & \| L_{\cdot \Lambda} (P_{C_2}(w^k) - P_{C_2}(w^*))_{\Lambda} \| \notag \\
	& \leq  & \| L_{\cdot \Lambda}\| \cdot \| (P_{C_2}(w^k) - P_{C_2}(w^*))_{\Lambda} \| \notag \\
	& \leq &  \| L_{\cdot \Lambda}\| \cdot \|w^k - w^*\|, \notag 
	\end{eqnarray}
	Since $\|L_{\cdot \Lambda}\| <1$ by Lemma~\ref{lemma:property_Q2}, the conclusion of this theorem follows. 
\end{proof}

\medskip
The rate of convergence asserted by the above result can also be obtained using~\cite[Theorem~5.16]{LLM09} and Proposition~\ref{lemma:Pnondegenerate_relationwithD}. In fact, it can be extended to the general case when $m$ is not necessarily equal to $n$ using the notions of ``super-regularity'' and ``linearly regular intersection''. We recall from~\cite{LLM09} that a closed set $C$ is \textit{super-regular} at $w^*$ if, for all $\varepsilon>0$, any two points $z_1, z_2$ sufficiently close to $w^*$ with $z_2\in C$, and any point $y\in P_C(z_1)$, satisfy $\lla z_1-y, z_2-y \rla \leq \varepsilon \| z_1-y \| \cdot \|z_2-y\|$. In particular, a convex set is super-regular at each of its points. We refer the reader to~\cite{DLT19} for more details on how super-regularity is related with other pre-existing notions.

\medskip 
To define the concept involving sets with linearly regular intersection, we first recall that the \textit{limiting normal cone} to a closed set $C$ at $w^*\in C$ is given by 
\[N_{C}(w^*) = \left\lbrace \lim _{k\to\infty} t_k (w^k-z^k): t_k \geq 0, ~w^k \to w^*, ~z^k \in P_C(w^k) \right\rbrace . \]
We say that two closed sets $C_1$ and $C_2$ have a \textit{linearly regular intersection} at $w^* \in C_1\cap C_2$ if 
\begin{equation}\label{eqn:linearlyregularequation}
N_{C_1}(w^*) \cap \left( -N_{C_2}(w^*)\right) = \{ 0\}.
\end{equation}
With these definitions, we state the following convergence result from~\cite{LLM09}.

\begin{lemma}\cite[Theorem~5.16]{LLM09}\label{lemma:linearlyregular}
	If $C_1$ and $C_2$ are closed sets which have a linearly regular intersection at $w^* \in C_1\cap C_2$ and if either $C_1$ or $C_2$ is super-regular at $w^*$, then any alternating projection sequence with initial point sufficiently close to $w^*$ converges linearly to a point in $C_1\cap C_2$. 
\end{lemma}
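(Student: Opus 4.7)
Since the statement is a direct recall of \cite[Theorem~5.16]{LLM09}, the natural plan is to outline the argument used in that paper. The goal is to show that, near $w^*$, one composite step $z\mapsto P_{C_1}(P_{C_2}(z))$ of alternating projections contracts the distance to $w^*$ by a ratio strictly less than one, and then to iterate this bound.

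The first ingredient is to promote the pointwise condition $N_{C_1}(w^*)\cap(-N_{C_2}(w^*))=\{0\}$ into a quantitative transversality estimate on a neighborhood of $w^*$. Using the upper semicontinuity of the limiting normal cone and a compactness argument on the unit sphere, one extracts $\bar\kappa<1$ and a neighborhood $U$ of $w^*$ such that
\[
\lla v_1,-v_2\rla \leq \bar\kappa\,\|v_1\|\|v_2\| \qquad \forall v_i\in N_{C_i}(w_i),~ w_i\in C_i\cap U,~ i=1,2.
\]
This is the essential ``angle bound'' between the two sets. The second ingredient is the super-regularity of, say, $C_2$ at $w^*$: given any prescribed $\varepsilon>0$, one can shrink $U$ so that whenever $y\in P_{C_2}(z)$ with $z,y\in U$, the displacement $z-y$ makes an angle at most $\pi/2+\varepsilon$ with every secant direction $y'-y$, $y'\in C_2\cap U$. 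Morally, $z-y$ behaves as an approximate element of $N_{C_2}(y)$ up to an $\varepsilon$-tilt.

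These two estimates combine on a single MAP step $z^k\mapsto y^k=P_{C_2}(z^k)\mapsto z^{k+1}=P_{C_1}(y^k)$ as follows. The vector $z^{k+1}-y^k$ lies in $-N_{C_1}(z^{k+1})$, while $z^k-y^k$ is an approximate element of $N_{C_2}(y^k)$ by super-regularity, so the transversality bound applied at (or near) the points $z^{k+1}$ and $y^k$ controls the inner product $\lla z^k-y^k,\,z^{k+1}-y^k\rla$ by $(\bar\kappa+O(\varepsilon))\|z^k-y^k\|\|z^{k+1}-y^k\|$. Expanding
\[
\|z^{k+1}-w^*\|^2 = \|z^k-w^*\|^2 - \|z^k-z^{k+1}\|^2 + 2\lla z^k-z^{k+1},\,w^*-z^{k+1}\rla,
\]
using that $w^*\in C_1\cap C_2$, and bounding each cross term via the angle estimates, yields $\|z^{k+1}-w^*\|\leq r\,\|z^k-w^*\|$ with $r=r(\bar\kappa,\varepsilon)<1$ provided $\varepsilon$ is chosen small relative to $1-\bar\kappa$. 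Iterating then gives linear convergence to some limit point, which must lie in $C_1\cap C_2$ because the composite residual $\|y^k-z^k\|\to 0$ and both sets are closed.

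The principal obstacle is coordinating three smallness requirements simultaneously: the neighborhood $U$ must be small enough that (i) the transversality inequality holds throughout $U$, (ii) the super-regularity slack $\varepsilon$ is realized at every point of $U$, and (iii) $U$ is forward-invariant under the MAP map. Since super-regularity is only an asymptotic condition at $w^*$, the order of choice matters: one first fixes $\varepsilon$ small enough that $r<1$, then shrinks $U$ accordingly, and finally uses the contraction itself to verify forward invariance by induction. Once these constants are locked in, the linear rate is immediate.
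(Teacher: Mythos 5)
The paper does not actually prove this lemma---it is quoted directly from \cite[Theorem~5.16]{LLM09}---so the only question is whether your sketch of that proof is sound. Your two preliminary ingredients (the quantitative angle bound extracted from the normal-cone condition \eqref{eqn:linearlyregularequation} by upper semicontinuity of the limiting normal cone and compactness, and the secant-angle formulation of super-regularity) are faithful to Lewis--Luke--Malick. The gap is in how you combine them: you assert a per-step contraction $\|z^{k+1}-w^*\|\leq r\,\|z^k-w^*\|$ with $r<1$ toward the \emph{fixed} reference point $w^*$. That inequality is false in general and cannot be the mechanism of the proof, because it would force $z^k\to w^*$, whereas the lemma only asserts convergence to \emph{some} point of $C_1\cap C_2$, and the limit is typically not $w^*$. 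Already for two planes in $\Re^3$ meeting along a line---a linearly regular intersection of two convex, hence super-regular, sets---von Neumann's theorem says the alternating projections converge to the projection of $z^0$ onto the intersection line, which differs from the chosen $w^*$ for generic $z^0$; so no uniform contraction of the distance to $w^*$ can hold (for convex sets one only gets Fej\'er monotonicity, not strict contraction). This is also the source of the internal inconsistency at the end of your argument, where the claimed contraction toward $w^*$ would identify the limit as $w^*$, yet you describe the limit merely as some point of the intersection detected through the vanishing residual.

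The correct combination, as in \cite{LLM09}, measures progress between consecutive iterates rather than against $w^*$: the angle bound together with super-regularity yields an estimate of the form $d_{C_2}(z^{k+1})\leq (\bar\kappa+O(\varepsilon))\,d_{C_2}(z^k)$, and since each composite step moves the iterate by at most a constant multiple of $d_{C_2}(z^k)$, the step lengths $\|z^{k+1}-z^k\|$ decay geometrically. The iterates are therefore a Cauchy sequence remaining in the chosen neighborhood (forward invariance is checked by summing the geometric series of step lengths, which is small when $z^0$ is close to $w^*$), and the limit lies in $C_1\cap C_2$ with R-linear rate. Your closing concerns about coordinating $\varepsilon$, the neighborhood $U$, and forward invariance are the right ones, but they must be run with this step-length (or distance-to-$C_2$) contraction, not with a contraction toward $w^*$.
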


\medskip 
Using the above result, we obtain the linear convergence of the MAP iterates.

\begin{theorem}
	Suppose that $w^* \in C_1\cap C_2$ such that $(u_i^*,v_i^*)\neq (0,0)$ for all $i=1,\dots, n$. If condition \eqref{eqn:linearlyregularintersection} holds, then any sequence generated by \eqref{eqn:MAP_2} with initial point sufficiently close to $w^*$ converges linearly to a point in $C_1\cap C_2$. 
\end{theorem}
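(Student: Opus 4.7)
The plan is to invoke Lemma \ref{lemma:linearlyregular} directly. Since $C_1$ is an affine set, it is convex, and hence super-regular at every point, including $w^*$. Thus the only nontrivial task is to verify the linearly regular intersection condition \eqref{eqn:linearlyregularequation} at $w^*$, namely $N_{C_1}(w^*) \cap (-N_{C_2}(w^*)) = \{0\}$, and then the conclusion will follow immediately from the lemma.

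First I would compute the two normal cones. Since $C_1 = \{w : Tw = \sqrt{2}c\}$ is affine with direction space $\Ker(T)$, we have $N_{C_1}(w^*) = \Ker(T)^\perp$. For $N_{C_2}(w^*)$, I would exploit the assumption $(u_i^*,v_i^*) \neq (0,0)$ for all $i$ together with $w^* \in C_2$: for each $i$, exactly one of $u_i^*, v_i^*$ is strictly positive and the other is zero. Let $I_1^* := \{i : u_i^* > 0\}$ and $I_2^* := \{i : v_i^* > 0\}$, so that $I_1^* \cup I_2^* = \{1,\dots,n\}$. In a sufficiently small neighborhood of $w^*$, the nonnegativity constraints are strict (hence inactive) for the positive components, so $C_2$ locally coincides with the linear subspace
\[
V := \{(u,v) \in \Re^n \times \Re^n : v_i = 0 \text{ for } i \in I_1^*,\ u_i = 0 \text{ for } i \in I_2^*\}.
\]
Consequently $N_{C_2}(w^*) = V^\perp$, which is itself a linear subspace (so that $-N_{C_2}(w^*) = N_{C_2}(w^*)$) and is explicitly given by $\{(u,v) : u_i = 0 \text{ for } i \in I_1^*,\ v_i = 0 \text{ for } i \in I_2^*\}$.

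Next I would observe that any $(u,v) \in N_{C_2}(w^*)$ satisfies $u_i v_i = 0$ for every $i$: indeed, for each index $i$ either $i \in I_1^*$ (forcing $u_i = 0$) or $i \in I_2^*$ (forcing $v_i = 0$). Hence $N_{C_2}(w^*) \subseteq \hat{C}_2$, with $\hat{C}_2$ defined in \eqref{eqn:C2hat}. Combining this inclusion with hypothesis \eqref{eqn:linearlyregularintersection} gives
\[
N_{C_1}(w^*) \cap (-N_{C_2}(w^*)) = \Ker(T)^\perp \cap N_{C_2}(w^*) \subseteq \Ker(T)^\perp \cap \hat{C}_2 = \{0\},
\]
so $C_1$ and $C_2$ have linearly regular intersection at $w^*$. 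Lemma \ref{lemma:linearlyregular} then yields linear convergence of any MAP sequence initialized sufficiently close to $w^*$ to a point in $C_1 \cap C_2$.

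The main subtlety is the local characterization of $C_2$ near $w^*$ and the resulting identification of $N_{C_2}(w^*)$ as a \emph{linear subspace} contained in $\hat{C}_2$; this is where the hypothesis $(u_i^*,v_i^*) \neq (0,0)$ is essential, since otherwise $C_2$ fails to be smooth at $w^*$ and its normal cone could be larger than $\hat{C}_2$, breaking the containment used in the final display.
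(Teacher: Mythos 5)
Your proposal is correct and follows essentially the same route as the paper: super-regularity of the convex set $C_1$, the identifications $N_{C_1}(w^*)=\Ker(T)^{\perp}$ and $N_{C_2}(w^*)\subseteq \hat{C}_2$ (valid because $(u_i^*,v_i^*)\neq(0,0)$ for all $i$), condition \eqref{eqn:linearlyregularintersection}, and then Lemma~\ref{lemma:linearlyregular}. The only cosmetic difference is that you obtain $N_{C_2}(w^*)$ by observing that $C_2$ agrees locally with a coordinate subspace near $w^*$, whereas the paper reads the same cone off from the componentwise normal-cone formula for the set $M$.
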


\medskip 
\begin{proof}
	We note that since $C_1$ is convex, then it is super-regular at each of its points. Thus, by Lemma~\ref{lemma:linearlyregular}, it suffices to show that $C_1$ and $C_2$ have a linearly regular intersection at $w^*\in C_1\cap C_2$ where $(u_i^*,v_i^*)\neq (0,0)$ for all $i=1,\dots, n$. Directly from the definition, the limiting normal cones to $C_1$ and $C_2$ are given, respectively, by
	
	\[N_{C_1}(w) = \Ker (T)^{\perp} \qquad \forall w\in C_1\] and
	\begin{equation}\label{eqn:NormalconeC2}
	N_{C_2}(w) = \{ w'=(u',v')\in \Re^n \times \Re^n : (u_i',v_i')\in N_{M} (u_i,v_i)\} \qquad \forall w\in C_2
	\end{equation}
	where $M$ is given by \eqref{eqn:C2bar} and 
	\[N_{M}(s,t) = \begin{cases}
	\{ (0,\lambda): \lambda \in \Re \} & \text{if}~s>t=0 \\ 
	\{ (\lambda,0): \lambda \in \Re \} & \text{if}~0=s<t \\ 
	\Re^{2}_- \cup M & \text{if}~s=t=0 
	\end{cases} . \]
	We note that the normal cone to $C_2$ can also be obtained using~\cite[Theorem~3.4]{Tam17}. Since $(u_i^*,v_i^*)\neq (0,0)$, it follows that $N_{C_2}(w^*) \subseteq \hat{C}_2$, where $\hat{C}_2$ is given by \eqref{eqn:C2hat}. By condition \eqref{eqn:linearlyregularintersection}, we see that \eqref{eqn:linearlyregularequation} holds, i.e. the intersection at $w^*$ is linearly regular. This completes the proof. 
\end{proof}

\subsection{Globally convergent relaxation of MAP}\label{subsec:globalconvergence_relaxedMAP}
In the preceding sections, our analysis was focused on the MAP iterates given by \eqref{eqn:MAP_2}. In order to obtain a global result, we now focus on a relaxed version of the iterations \eqref{eqn:MAP_2} given by 
\begin{equation}\label{eqn:relaxedMAP}
w^{k+1} \in (1-\gamma)P_{C_2}(w^k) + \gamma (P_{C_1}\circ P_{C_2})(w^k)
\end{equation}
where $\gamma \in (0,1)$ is fixed and the initial point is $w^0 = (1-\gamma)\bar{w}^0 + \gamma P_{C_1}(\bar{w}^0)$ with $\bar{w}^0 \in C_2$.

\begin{theorem}\label{theorem:global}
	Let $\{w^k\}_{k=0}^{\infty}$ be a sequence generated by \eqref{eqn:relaxedMAP}. If $\{ w^k\}_{k=0}^{\infty}$ is bounded, then there exists $\bar{w}^*\in C_2$ such that $w^k \to w^*$ where $w^* = (1-\gamma)\bar{w}^* + \gamma P_{C_1}(\bar{w}^*)$. Moreover, if condition \eqref{eqn:linearlyregularintersection} holds and $(\bar{u}_i^*,\bar{v}_i^*) \neq (0,0)$ for all $i=1,\dots,n$, then $w^* \in C_1\cap C_2$, that is, the sequence $\{w^k\}_{k=0}^{\infty}$ is globally convergent to a solution of the feasibility problem \eqref{eqn:AVE_as_a_FP2}. 
\end{theorem}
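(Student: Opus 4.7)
My plan is to reinterpret the relaxed MAP iteration~\eqref{eqn:relaxedMAP} as a projected-gradient scheme for minimizing the semi-algebraic composite objective
\[
\Phi(\bar{w}) := F(\bar{w}) + I_{C_2}(\bar{w}), \qquad F(\bar{w}) := \tfrac{1}{2}\|\bar{w}-P_{C_1}(\bar{w})\|^2,
\]
and then to invoke the Attouch--Bolte--Svaiter convergence theory~\cite{ABS13}. Choosing any selection $\bar{w}^k\in P_{C_2}(w^k)$ consistent with \eqref{eqn:relaxedMAP}, the update reads $w^{k+1} = \bar{w}^k - \gamma\bigl(\bar{w}^k-P_{C_1}(\bar{w}^k)\bigr) = \bar{w}^k - \gamma\nabla F(\bar{w}^k)$, and hence
\[
\bar{w}^{k+1}\in P_{C_2}\bigl(\bar{w}^k-\gamma\nabla F(\bar{w}^k)\bigr),
\]
which is exactly the projected-gradient (forward-backward) iteration for $\Phi$ with stepsize $\gamma$. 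By Proposition~\ref{prop:P_C1}, $P_{C_1}$ is affine, so $F$ is a convex quadratic whose gradient $\nabla F = \mathrm{Id}-P_{C_1}$ is $1$-Lipschitz; thus $\gamma\in(0,1)$ sits strictly below the critical stepsize $1/L=1$.

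The next step is to verify the standard sufficient-decrease, relative-error and continuity hypotheses of~\cite{ABS13}. Combining the descent lemma for $F$ with the defining optimality of the projection $\bar{w}^{k+1}\in P_{C_2}(\bar{w}^k-\gamma\nabla F(\bar{w}^k))$ gives
\[
\Phi(\bar{w}^{k+1}) + \tfrac{1-\gamma}{2\gamma}\|\bar{w}^{k+1}-\bar{w}^k\|^2 \le \Phi(\bar{w}^k),
\]
while reading that same optimality condition as $\bar{w}^k-\gamma\nabla F(\bar{w}^k)-\bar{w}^{k+1}\in N_{C_2}(\bar{w}^{k+1})$ and adding $\nabla F(\bar{w}^{k+1})$ to both sides yields $\mathrm{dist}(0,\partial\Phi(\bar{w}^{k+1}))\le(1+1/\gamma)\|\bar{w}^{k+1}-\bar{w}^k\|$. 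The continuity condition is automatic since $\Phi$ restricted to $C_2$ agrees with the continuous function $F$. Because $F$ is polynomial and $C_2$ in \eqref{eqn:C2} is semi-algebraic, $\Phi$ is a proper lsc semi-algebraic function and therefore has the Kurdyka--\L ojasiewicz property. Boundedness of $\{w^k\}$ transfers to $\{\bar{w}^k\}$ since $0\in C_2$ implies $\|\bar{w}^k\|\le 2\|w^k\|$. The abstract convergence result of~\cite{ABS13} then yields $\bar{w}^k\to\bar{w}^*$ for some critical point $\bar{w}^*\in C_2$ of $\Phi$, and continuity of $P_{C_1}$ gives $w^k\to w^*=(1-\gamma)\bar{w}^*+\gamma P_{C_1}(\bar{w}^*)$, settling the first claim.

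For the second claim, critical point condition $0\in\nabla F(\bar{w}^*)+N_{C_2}(\bar{w}^*)$ rewrites as
\[
P_{C_1}(\bar{w}^*)-\bar{w}^* \in N_{C_2}(\bar{w}^*).
\]
The left-hand side equals $-T^{\dagger}(T\bar{w}^*-\sqrt{2}c)\in \Ran(T^{\T})=\Ker(T)^{\perp}$. On the other hand, the assumption $(\bar{u}_i^*,\bar{v}_i^*)\ne(0,0)$ for every $i$ combined with the componentwise description \eqref{eqn:NormalconeC2} of the limiting normal cone to $C_2$ rules out the ``corner'' case $\Re^2_-\cup M$ at every coordinate, whence $N_{C_2}(\bar{w}^*)\subseteq\hat{C}_2$. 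The transversality condition \eqref{eqn:linearlyregularintersection} then forces $P_{C_1}(\bar{w}^*)-\bar{w}^*=0$, so $\bar{w}^*\in C_1\cap C_2$ and $w^*=\bar{w}^*$ solves the feasibility problem~\eqref{eqn:AVE_as_a_FP2}. The main obstacle will be a clean verification of hypotheses (H1)--(H3) of~\cite{ABS13} in the presence of the multi-valued projection $P_{C_2}$: one has to show that any admissible sequence of selections $\bar{w}^k$ generated by \eqref{eqn:relaxedMAP} simultaneously satisfies the descent inequality and the subgradient bound, so that the abstract KL convergence mechanism can be applied uniformly across selections.
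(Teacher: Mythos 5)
Your proposal is correct and follows essentially the same route as the paper: both rewrite the relaxed iteration as the projected-gradient step $\bar{w}^{k+1}\in P_{C_2}\bigl(\bar{w}^k-\gamma\nabla h(\bar{w}^k)\bigr)$ for $h(w)=\tfrac12\|w-P_{C_1}(w)\|^2$, obtain convergence of $\{\bar{w}^k\}$ to a critical point via the semi-algebraic convergence theory of~\cite{ABS13}, and then use $P_{C_1}(\bar{w}^*)-\bar{w}^*\in\Ker(T)^{\perp}\cap N_{C_2}(\bar{w}^*)\subseteq\Ker(T)^{\perp}\cap\hat{C}_2=\{0\}$ exactly as in the paper. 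The only difference is presentational: the paper invokes \cite[Theorem~5.3]{ABS13} directly (which already covers the possibly multivalued projected-gradient scheme for semi-algebraic data, so the "obstacle" you flag about selections is handled there), whereas you re-verify the sufficient-decrease, relative-error and continuity hypotheses of the abstract framework yourself.
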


\begin{proof}
	To prove this result, denote $\bar{w}^k \in P_{C_2}(w^k)$ for all $k\geq 0$. By \eqref{eqn:relaxedMAP}, we have 
	\begin{equation}\label{eqn:relaxedMAP2}
	w^{k+1} = (1-\gamma)\bar{w}^k + \gamma P_{C_1}(\bar{w}^k)
	\end{equation}
	and so
	\begin{equation}\label{eqn:relaxedMAPswitched}
	\bar{w}^{k+1} \in P_{C_2}(w^{k+1}) = P_{C_2}((1-\gamma)\bar{w}^k + \gamma P_{C_1}(\bar{w}^k)).
	\end{equation}
	Let $h(w) := \frac{1}{2}\| w - P_{C_1}(w)\|^2$. Then $h$ is a Lipschitz continuous function with Lipschitz constant $1$ and $\nabla h(w) = w - P_{C_1}(w)$. Thus, $w - \gamma \nabla h(w) = (1-\gamma) w + P_{C_1}(w)$. In turn, \eqref{eqn:relaxedMAPswitched} reduces to $\bar{w}^{k+1} \in P_{C_2}(\bar{w}^k - \gamma \nabla h(\bar{w}^k))$. From~\cite[Theorem~5.3]{ABS13}, we conclude that there exists a point $\bar{w}^* \in C_2$ such that $\bar{w}^k \to \bar{w}^*$ and 
	\begin{equation}\label{eqn:stationarypoint}
	0\in \nabla h (\bar{w}^*) + N_{C_2}(\bar{w}^*).
	\end{equation}
	By continuity of $P_{C_1}$ and using equation \eqref{eqn:relaxedMAP2}, we see that $w^k \to w^* = (1-\gamma)\bar{w}^* + \gamma P_{C_1}(\bar{w}^*)$, which proves the first claim. 
	
	To prove the last claim, note that from \eqref{eqn:stationarypoint}, there exists $z^* \in N_{C_2}(\bar{w}^*)$ such that $z^* = P_{C_1}(\bar{w}^*)-\bar{w}^*$. The latter equation implies that $z^* \in \Ker (T)^{\perp}$. Since $(\bar{u}_i^*,\bar{v}_i^*)\neq (0,0)$ for all $i=1,\dots, n$, it follows from \eqref{eqn:NormalconeC2} that $N_{C_2}(\bar{w}^*) \subset \hat{C}_2$. By condition \eqref{eqn:linearlyregularintersection}, we conclude that $z^* = 0$ and therefore $\bar{w}^* = P_{C_1}(\bar{w}^*)$, i.e. $\bar{w}^* \in C_1$. Hence, $w^* = \bar{w}^* \in C_1\cap C_2$. 
\end{proof}

\medskip 
Notice that the above theorem guarantees linear convergence of \eqref{eqn:MAP_2} to a point in $C_1\cap C_2$, which may not be the same as the point $w^*$. On the other hand, Theorem \ref{theorem:linearrate} shows that linear convergence to $w^*$ is achieved. 

\subsection{A related fixed point algorithm}\label{subsec:relatedfixedpointalgorithm}
Another algorithm can also be derived from the method of alternating projections. To describe this algorithm, we denote by $D$ the multivalued mapping from $\Re^n\times \Re^n$ to $\Re^{2n\times 2n}$ such that for each $w=(u,v)\in \Re^n\times \Re^n$, $D(w)$ is a set containing diagonal matrices $D_w$ such that
\begin{equation*}\label{eqn:D(w)}
((D_w)_{ii},(D_w)_{n+i,n+i}) \in \begin{cases}
\{ (1,0)\} & \text{if}~ u_i>v_i ,~u_i \geq 0 \\
\{ (0,1)\} & \text{if}~u_i<v_i, ~ v_i\geq 0 \\
\{ (0,1), (1,0)\} & \text{if}~u_i=v_i> 0 \\
\{ (0,0)\} & \text{if}~u_i=v_i\leq 0
\end{cases},
\end{equation*}
for all $i=1,\dots, n$. Then, the projection onto $C_2$ can equivalently written as 
\[ P_{C_2}(w)  = D(w)w  = \{ D_w w : D_w\in D(w)\}. \]
Suppose now that $w$ is a fixed point of $P_{C_1}\circ P_{C_2}$, i.e. $w\in (P_{C_1} \circ P_{C_2})(w)$. Recalling that $P_{C_1}(w) = Lw + \sqrt{2}T^{\dagger}c$ where $L = I_{2n} - T^{\dagger}T$, we have
\[w = LD_w w + \sqrt{2}T^{\dagger}c, \qquad \text{where}~D_w \in D(w).\]
That is, $w\in \Fix (P_{C_1}\circ P_{C_2}) $ if and only if there exists $D_w \in D(w)$ such that 
\begin{equation*}\label{eqn:fixedpointequation3}
(I_{2n}-LD_w)w = \sqrt{2}T^{\dagger}c .
\end{equation*}
This motivates the iterations
\begin{equation}\label{eqn:fixedpointalgorithm}
w^{k+1} = \sqrt{2}(I_{2n}-LD_{w^k})^{-1}T^{\dagger}c,
\end{equation}
where $D_{w^k}\in D(w^k)$. This algorithm is well-defined if $1$ is not an eigenvalue of $LD(w^k)$ for all $k$. A particular case is described in the following proposition.  

\begin{proposition}
	The iterations \eqref{eqn:fixedpointalgorithm} are well-defined for any initial point $w^0\in \Re^n \times \Re^n$ if $Q$ given by \eqref{eqn:Qmatrix} is nondegenerate.
\end{proposition}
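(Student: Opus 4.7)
The plan is to verify directly that $I_{2n} - LD_{w^k}$ is nonsingular for every $D_{w^k} \in D(w^k)$ and every iterate, since this is exactly the condition required for the update \eqref{eqn:fixedpointalgorithm} to be well-defined. Because the argument does not depend on $k$, it suffices to fix an arbitrary $w \in \Re^n \times \Re^n$ and $D_w \in D(w)$, and to show that $I_{2n} - LD_w$ is invertible whenever $Q$ is nondegenerate.

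First, I would extract the index set $\Lambda_0 := \{j \in \{1,\dots,2n\} : (D_w)_{jj} = 1\}$. From the definition of $D(w)$, for every $i \in \{1,\dots,n\}$ at most one of $(D_w)_{ii}$ and $(D_w)_{n+i,n+i}$ equals $1$. Setting $\Lambda_1 := \Lambda_0 \cap \{1,\dots,n\}$ and $\Lambda := \Lambda_1 \cup \{n+i : i \notin \Lambda_1\}$, I therefore obtain $\Lambda_0 \subseteq \Lambda$, and $\Lambda$ has exactly the form required to invoke Lemma~\ref{lemma:property_Q2}. Under the nondegeneracy assumption on $Q$, that lemma yields $\|L_{\cdot \Lambda}\| < 1$.

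Next, I would reduce the invertibility of $I_{2n} - LD_w$ to that of a principal submatrix of $L$. Suppose $(I_{2n} - LD_w)x = 0$ and set $y := D_w x$, so that $y$ is supported on $\Lambda_0$ with $y_{\Lambda_0} = x_{\Lambda_0}$; the equation rewrites as $x = Ly$. Restricting to the $\Lambda_0$-components gives $y_{\Lambda_0} = L_{\Lambda_0 \Lambda_0}\, y_{\Lambda_0}$, so if $I - L_{\Lambda_0 \Lambda_0}$ is invertible, then $y_{\Lambda_0} = 0$, hence $y=0$, and therefore $x = Ly = 0$.

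Finally, the invertibility of $I - L_{\Lambda_0\Lambda_0}$ follows from two applications of Lemma~\ref{lemma:singularvalue_submatrix}: deleting the columns of $L_{\cdot \Lambda}$ indexed by $\Lambda \setminus \Lambda_0$ and then the rows indexed by $\{1,\dots,2n\} \setminus \Lambda_0$ can only decrease the operator norm, so $\|L_{\Lambda_0\Lambda_0}\| \leq \|L_{\cdot \Lambda_0}\| \leq \|L_{\cdot \Lambda}\| < 1$. The main subtlety I anticipate is the bookkeeping for the index sets, in particular choosing a $\Lambda$ of the shape required by Lemma~\ref{lemma:property_Q2} that still contains $\Lambda_0$ even when the $(0,0)$ case of $D(w)$ removes both $i$ and $n+i$ from $\Lambda_0$, and ensuring the reduction from $I_{2n} - LD_w$ to the small principal submatrix loses no information. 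Once this is in place, Lemmas~\ref{lemma:property_Q2} and \ref{lemma:singularvalue_submatrix} close the argument.
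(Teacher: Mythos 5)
Your proof is correct and follows essentially the same route as the paper: both rest on Lemma~\ref{lemma:property_Q2} (nondegeneracy of $Q$ gives $\|L_{\cdot \Lambda}\|<1$ for the structured index set $\Lambda$) combined with the singular-value monotonicity of Lemma~\ref{lemma:singularvalue_submatrix} to cover the support $\Lambda_0$ of $D_w$. The paper just finishes more directly, noting $\|LD_w\| = \|L_{\cdot \Lambda_0}\| \leq \|L_{\cdot \Lambda}\| < 1$ and concluding nonsingularity of $I_{2n}-LD_w$ at once, so your kernel/principal-submatrix reduction, while valid, is an avoidable extra step.
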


\begin{proof}
	We show that for all $w\in \Re^n \times \Re^n$, the matrix $I_{2n}-LD_w$ is nonsingular for any $D_w \in D(w)$. To this end, let $\Lambda = \{ i\in \{ 1,\dots, 2n\} ~:~ (D_w)_{ii} = 1\}$. Then 
	\[\| LD_w \| = \| L_{\cdot \Lambda} \| \leq \| L\| ,\]
	where the inequality follows from Lemma \ref{lemma:singularvalue_submatrix}. Since $Q$ is nondegenerate, $\| L\| <1 $ by Lemma~3.9. Thus, $\| LD_w \| < 1$ and therefore $I_{2n}-LD_w$ is nonsingular, as desired. 
\end{proof}

\medskip 
Unlike the MAP iterates \eqref{eqn:MAP_2}, it is not difficult to show that any sequence generated via \eqref{eqn:fixedpointalgorithm} is bounded. 

\begin{proposition}
	Let $\{w^k\}_{k=0}^{\infty}$ be any sequence generated by \eqref{eqn:fixedpointalgorithm}. Then $\{w^k\}_{k=0}^{\infty}$ is a bounded sequence. Any accumulation point $w^*$ of $\{w^k\}_{k=0}^{\infty}$ satisfies $w^* = \sqrt{2}(I_{2n} - LD^*)^{-1}T^{\dagger}c$, where $D^*\in \Re^{2n\times 2n}$ is a diagonal matrix with diagonal elements of $1$ or $0$ and satisfy $D_{ii}^*D_{n+i,n+i}^* = 0$ for all $i=1,\dots, n$. 
\end{proposition}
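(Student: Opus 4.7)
The proof will rest on the observation that the matrix $D_{w^k}$ appearing in the iteration \eqref{eqn:fixedpointalgorithm} is drawn from a \emph{finite} set of diagonal matrices, which forces the tail of $\{w^k\}$ to lie in a finite subset of $\Re^{2n}$.

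First I would describe that set explicitly. Let
\[
\mathcal{D} := \{ D \in \Re^{2n\times 2n} : D \text{ diagonal}, D_{ii}\in\{0,1\}, D_{ii}D_{n+i,n+i}=0 \text{ for } i=1,\dots,n\}.
\]
From the definition of the set-valued map $D(\cdot)$ given just before \eqref{eqn:fixedpointalgorithm}, for every $w\in\Re^n\times\Re^n$ and every $D_w\in D(w)$ one has $D_w\in\mathcal{D}$. Since for each $i$ the pair $((D_w)_{ii},(D_w)_{n+i,n+i})$ can only take the three values $(0,0),(1,0),(0,1)$, it follows that $|\mathcal{D}|=3^n<\infty$.

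Next I would use this to prove boundedness. Because the sequence $\{w^k\}$ is assumed to be generated by \eqref{eqn:fixedpointalgorithm}, the matrix $I_{2n}-LD_{w^k}$ is invertible for every $k\geq 0$. Hence, letting
\[
\mathcal{F} := \left\{ \sqrt{2}(I_{2n}-LD)^{-1}T^{\dagger}c ~:~ D\in\mathcal{D},~I_{2n}-LD \text{ is invertible}\right\},
\]
we have $w^{k+1}\in\mathcal{F}$ for all $k\geq 0$. Thus $\{w^k\}_{k=0}^\infty\subseteq\{w^0\}\cup\mathcal{F}$, a finite set, and boundedness is immediate.

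Finally, I would handle accumulation points. If $w^*$ is an accumulation point, then there is a subsequence $\{w^{k_j}\}$ with $w^{k_j}\to w^*$ and $k_j\geq 1$ for all $j$. Since $\{w^{k_j}\}\subseteq\mathcal{F}$ and $\mathcal{F}$ is a finite (hence closed) set, $w^*\in\mathcal{F}$. In other words, there exists some $D^*\in\mathcal{D}$ with $I_{2n}-LD^*$ invertible and $w^*=\sqrt{2}(I_{2n}-LD^*)^{-1}T^{\dagger}c$, which is exactly the claim.

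There is no real obstacle here; the only point requiring care is making sure that the well-definedness of the iteration supplies the invertibility of $I_{2n}-LD_{w^k}$ at each step so that $\mathcal{F}$ is nonempty and contains every iterate $w^{k+1}$. Note that the proof makes no use of $D^*$ being an element of $D(w^*)$, in line with the statement.
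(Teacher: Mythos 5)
Your proof is correct. It rests on the same key observation as the paper's proof --- that the multivalued map $D(\cdot)$ takes values in a finite set of at most $3^n$ diagonal $0$--$1$ matrices satisfying the complementarity condition --- but you exploit it more directly: since $w^{k+1}$ depends on $w^k$ only through $D_{w^k}$, every iterate with $k\geq 1$ lies in the finite set $\mathcal{F}$ of points $\sqrt{2}(I_{2n}-LD)^{-1}T^{\dagger}c$ with $D$ in the admissible (invertible) range, so boundedness is immediate and any accumulation point, being the limit of a convergent subsequence inside a finite set, is itself an element of $\mathcal{F}$ (indeed the subsequence is eventually constant). The paper instead uses finiteness to obtain a uniform bound $\kappa$ on $\|(I_{2n}-LD_w)^{-1}\|$, giving the explicit estimate $\|w^{k+1}\|\leq\sqrt{2}\kappa\|T^{\dagger}c\|$, and then extracts a convergent subsequence of the pairs $(w^{k_j},d^{k_j-1})$ and passes to the limit in \eqref{eqn:fixedpointalgorithm}. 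Your route is slightly more elementary and cleaner on one technical point: by building invertibility of $I_{2n}-LD$ into the definition of $\mathcal{F}$ (inherited from the well-definedness of the iteration), you avoid having to justify that $I_{2n}-LD^*$ is invertible at the limit, which in the paper's limit-passing argument is implicit and really relies on the fact that $D_{w^{k_j-1}}$ is eventually equal to $D^*$. What the paper's version buys in exchange is the explicit norm bound on the iterates; your version buys the stronger structural conclusion that the tail of the sequence takes only finitely many values, so every accumulation point is actually attained infinitely often. Your closing remark that $D^*$ need not belong to $D(w^*)$ is also consistent with the statement and with the paper's proof.
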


\begin{proof}
	Note that the range of the multivalued mapping $D$ is a finite set. In particular, the set $\{D_w : D_w \in D(w) ~\text{and}~w\in \Re^n\times \Re^n\}$ has $3^n$ elements. Thus, there exists a constant $\kappa \in (0,\infty)$ such that $\| (I_{2n} - LD_w)^{-1}\| \leq \kappa $ for all $w\in \Re^n\times \Re^n$ and $D_w \in D(w)$. Thus, $\|w^{k+1}\| \leq \sqrt{2} \|(I_{2n} - LD_{w^k})^{-1}\|  \cdot  \|T^{\dagger}c\| \leq \sqrt{2}\kappa \|T^{\dagger}c\|$ for all $k$. Hence, $\{w^k\}_{k=0}^{\infty}$ is a bounded sequence. To prove the last claim, let $w^*$ be an arbitrary accumulation point of $\{w^k\}_{k=0}^{\infty}$, and let $\{ w^{k_j}\}_{j=1}^{\infty}$ be a subsequence that converges to $w^*$. Denote by $d^{k_j - 1}$ the diagonal entries of $D_{w^{k_j - 1}}$. Then the sequence $\{ (w^{k_j},d^{k_j - 1})\}_{j=1}^{\infty}$ is bounded and must have subsequence that converges to some point $(w^*,d^*)$. Without loss of generality, we may assume that $\{ (w^{k_j},d^{k_j - 1})\}_{j=1}^{\infty}$ converges to $(w^*,d^*)$. It follows that $D_{w^{k_j - 1}}\to D^*$ as $j\to\infty$, where $D^*$ is the diagonal matrix with diagonal entries equal to $d^*$. Setting $k=k_j$ in \eqref{eqn:fixedpointalgorithm} and letting $j\to \infty$, we get the desired conclusion. 
\end{proof}
\medskip 
Both the MAP algorithm \eqref{eqn:MAP_2} and the iterations \eqref{eqn:fixedpointalgorithm} are aimed at finding a fixed point of $P_{C_1}\circ P_{C_2}$. However, the iterations \eqref{eqn:fixedpointalgorithm} require more computational effort than MAP since the former involves solving a linear system involving $2n$ equations in $2n$ unknowns for each iteration. Nevertheless, we may consider a hybrid algorithm where we generate first a sequence of MAP iterates, then use \eqref{eqn:fixedpointalgorithm} for the succeeding iterations. We call this approach the MAP-LS algorithm (where LS denotes linear system involved in computing the iterations given by \eqref{eqn:fixedpointalgorithm}) which is described in Algorithm \ref{algorithm}. Whenever convergent, the limit of the sequence generated by MAP-LS algorithm is necessarily a fixed point of $P_{C_1} \circ P_{C_2}$. 

\medskip

\begin{algorithm}[H]\label{algorithm}
	\SetAlgoLined 
	Choose a termination parameter $\varepsilon$ and set $w^0 = T^{\dagger}c$. Let $N$ be a positive integer and $\delta>0$. Set $k=0$. \\
	\begin{description}
		\item[Step 1.] Let 
		\[w^{k+1} \in  \begin{cases}
		(P_{C_1} \circ P_{C_2})(w^k) & \text{if}~k\leq N ~\text{and}~\|w^{k+1}-w^k\| >  \delta\\
		\sqrt{2}(I_{2n}-LD_{w^k})^{-1}T^{\dagger}c & \text{if}~k > N~\text{~or}~\|w^{k+1}-w^k\| \leq \delta.
		\end{cases} .\] 
		\item[Step 2.] Set $x^{k+1} = \frac{1}{\sqrt{2}}(u^{k+1} - v^{k+1})$. 
		\item[Step 3.] Stop if $\|Ax^{k+1} + B|x^{k+1}|-c\| \leq \varepsilon$. Otherwise, set $k=k+1$ and go to Step 1. 
	\end{description}
	\caption{MAP-LS algorithm}
\end{algorithm}

\section{Numerical simulations}\label{sec:numerical}
In this section, we demonstrate the applicability of MAP and MAP-LS in solving randomly generated absolute value equations \eqref{eqn:AVE}. We first note some remarks on the implementation of our algorithms.

\subsection{Implementation of MAP and MAP-LS}\label{sec:implementation}
If $T = [\begin{array}{ll}
A+B & -A+B
\end{array}]\in \Re^{m\times 2n}$ is of full row rank, then its Moore-Penrose inverse of $T$ is well-known and is given by 
\begin{equation*}
T^{\dagger} = T^{\T} (TT^{\T})^{-1} .
\end{equation*}
In view of Proposition~\ref{prop:P_C1}, we calculate the projection onto $C_1$ of a point $w\in \Re^n\times \Re^n$ by first solving for $z$ in  
\begin{equation}\label{eqn:linearsystem}
TT^{\T} z = Tw - \sqrt{2}c,
\end{equation}
then setting $P_{C_1}(w) = w - T^{\T}z$.
\medskip 

Notice that since $T$ is of full row rank, the coefficient matrix $TT^{\T}$ of the linear system \eqref{eqn:linearsystem} is a symmetric positive definite matrix, so we can use its Cholesky decomposition. In particular, we use the Matlab function \texttt{dS = decomposition(S,'chol')} where $S:= TT^{\T}=2(AA^{\T} + BB^{\T})$ and solve for $z$ in \eqref{eqn:linearsystem} by using the backslash operator, i.e. \texttt{z = dS\textbackslash b} where $b := Tw-\sqrt{2}c$. 

\medskip 
In particular, by virtue of Lemma~\ref{lemma:pseudoinverse_property}, the above procedure can be applied when dealing with the traditional AVE \eqref{eqn:AVE} with $A\in \Re^{n\times n}$ and $B=-I_n$. Furthermore, in this case, the matrix-vector multiplication $T^{\T}z$ can be calculated more efficiently by computing first $z':= A^{\T}z$ so that $T^{\T}z = (z'-z, -z'-z)$.  

\medskip 
On the other hand, the inversion of $2n\times 2n$ matrix in equation \eqref{eqn:fixedpointalgorithm} may be computationally intensive. However, since $I_{2n}-LD_{w^k}$ can be partitioned into four $n\times n$ blocks, then its inverse can be calculated in terms of the inverses of two $n\times n$ matrices. Particularly, 
if we let $L = \left[ \begin{array}{cc}
L_1 & L_2 \\ L_2^{\T} & L_3
\end{array}\right]$ and $D_{w^k}= \left[ \begin{array}{cc}
D_1^k & 0 \\ 0 & D_2^k
\end{array}\right]$, then
\[I_{2n}-LD_{w^k} = \left[ \begin{array}{cc}
I_n - L_1 D_1^k  & -L_2 D_2 ^k \\ -L_2^TD_1^k  & I_n - L_3D_2^k
\end{array}\right]. \] 
Thus, the inverse of $I_{2n}-LD_{w^k}$ can be calculated in terms of the inverse of $I_n - L_1 D_1^k$ and the inverse of its Schur complement (or the inverse of $I_n - L_3 D_2^k$ and the inverse of its Schur complement). These inverses exist, in particular, if $\|LD_{w^k}\|<1$ (such as when $Q$ is nondegenerate) in which case $\|LD_1^k\|<1$ and $\|LD_2^k\|<1$ by Lemma~\ref{lemma:singularvalue_submatrix}. In general, such approach is more efficient than dealing directly with the inverse of $I_{2n}-LD_{w^k}$. Hence, we take this approach when using the MAP-LS algorithm. 

\subsection{Numerical results}
We compare MAP and MAP-LS to four other algorithms in the literature, each of which is a representative of the four classifications described in the introduction. We only choose those algorithms which, like MAP and MAP-LS, do not require parameters which need to be tuned carefully. From the class of algorithms based on Newton methods, we choose the generalized Newton method (GNM)~\cite{Man08} as the other variants of the Newton method involve parameters that may be problem-dependent or are difficult to tune. From the second group, we choose the Picard iteration method (PIM) in~\cite{RHF14}. The variant of this method presented in~\cite{Salkuyeh14} is only applicable for positive definite matrices and involves a problem-dependent parameter. On the other hand, the iterates of the Douglas-Rachford splitting method~\cite{CYH21} are simply convex combinations of the PIM iterates and the current iterate (similar to the MAP relaxation \eqref{eqn:relaxedMAP}). In fact, if we use the prescribed parameters in~\cite{CYH21}, the Douglas-Rachford iterates approximate the PIM iterates. From matrix splitting iteration methods, we choose the Gauss-Seidel iteration~\cite{EHS17}. The SOR-like iteration method~\cite{KM17} also requires a parameter, and from the numerical results presented in~\cite{KM17}, we see that the SOR-like iteration also generates iterates which are approximately the same as the PIM iterations for optimally chosen parameters. Finally, we note that the concave minimization approach involves solving a linear program at each iteration, which may be inefficient for large scale problems. We omit comparisons with this approach for the case $B=-I_n$ as the current algorithms in the literature~\cite{Man07-2,Man15} are not competitive enough with the other methods. However, we use the successive linearization algorithm (SLA) in~\cite{Man07} for the general AVE \eqref{eqn:AVE}, which is the only existing algorithm in the literature that can solve such problems. 

\medskip

We briefly describe the algorithms we have chosen for our numerical comparisons:

\begin{itemize}
	\item[(a)] \textit{Generalized Newton method (GNM)}~\cite{Man08}\\ 
	This algorithm is aimed at solving the AVE \eqref{eqn:AVE} with $m=n$ and $B=-I_n$, and the iterations are given by 
	\begin{equation}\label{eqn:GNM}
	x^{k+1} = (A-D^k)^{-1}c ,
	\end{equation}
	where $D^k = \diag (\sgn(x_1^k),\dots, \sgn (x_n^k))$. The iterations are derived by applying the semismooth Newton method in solving the equation $Ax-|x| - c = 0$. As in~\cite{Man08}, we use the Matlab's backslash operator ``$\backslash$'' to obtain the iterates. The maximum iterations for this algorithm is set to 2000. 
	\item[(b)] \textit{Picard iteration method (PIM)}~\cite{RHF14} \\
	This method is applicable whenever $m=n$ and $A$ is invertible. The algorithm consists of the fixed point iterations for the equation $x=A^{-1}(-B|x|+c)$, that is, 
	\begin{equation}\label{eqn:Picard}
	x^{k+1} = A^{-1} (-B|x^k|+c).
	\end{equation}
	From the above formula, we only need to compute $A^{-1}$ once. For the sake of efficiency, we pre-compute the LU decomposition of $A$ using the \texttt{decomposition} function of Matlab. We set the maximum number of iterations to 2000.
	
	\item[(c)] \textit{Gauss-Seidel iteration method (GSM)}~\cite{EHS17} \\
	Similar to the generalized Newton method, this Gauss-Seidel algorithm solves the AVE $Ax-|x|=c$ by decomposing $A$ as $A=D-E-F$ where $D$, $E$ and $F$ are diagonal, strictly lower triangular and strictly upper triangular matrices. Using this decomposition, the Gauss-Seidel iterations are given by
	\[(D-E)x^{k+1} - |x^{k+1}| = Fx^k + c.\]
	Though the above system is nonlinear, the next iterate $x^{k+1}$ can be easily solved since $D-E$ is lower triangular. In particular, having computed $x_1^{k+1}$, we inductively compute $x_i^{k+1}$ using the previously obtained coordinates $x_1^{k+1}, x_2^{k+1}, \dots, x_{i-1}^{k+1}$. We set the maximum iterations to 10000.
	\item[(d)] \textit{Successive linearization algorithm (SLA)}~\cite{Man07} \\ 
	This is the only algorithm in the existing literature which can handle the general AVE \eqref{eqn:AVE}. Given an initial point $(x^0,t^0,s^0) \in \Re^n \times \Re^n \times \Re^m $, we solve the linear programming problem
	\begin{align*}
	\min & ~ \epsilon \sum _{i=1}^n (\sgn(x_i^k) x_i + t_i) + \sum _{j=1}^m s_i \\
	\text{s.t.} & ~ - s \leq Ax+Bt-c \leq s \\ 
	& ~ -t \leq x \leq t ,
	\end{align*}
	and call its solution $(x^{k+1}, t^{k+1}, s^{k+1})$. To solve this linear program, we use the Matlab function \texttt{linprog}. We set the maximum number of iterations to 1000. 
\end{itemize}

\medskip

All simulations were carried out in Matlab R2020a on a desktop machine with an Intel Core  i7-8700 3.20 GhZ and 32GB of memory. We use the zero vector as the initial point for all the algorithms, and the stopping criterion is
\begin{equation}\label{eqn:stoppingcriterion}
\| Ax^k + B|x^k| - c\| \leq \varepsilon \quad \text{with}~\quad \varepsilon = 10^{-6}.
\end{equation}	
\medskip 
For the case $m=n$ and $B=-I_n$, we compare our algorithms with GNM, PIM and GSM, since SLA takes a lot of computing time in solving the these problems. For the general case, we can only compare our algorithms with the SLA as the other solvers can only handle the case $m=n$. 

\begin{example}\label{example1}
	We generate a matrix $A$ as in~\cite{Man08}. First, we generate a matrix $A'\in \Re^{n\times n}$ whose entries are from the uniform distribution on $[-10,10]$. Then, we let $A = A'/(t\sigma_{\min}(A'))$ where $t$ is a uniform random number in $[0,1]$. We then randomly generate a vector $x^*\in \Re^n$ such that $x_i^* = r \cdot 10^{\alpha s}$ where $\alpha \in \{ 0,1,2,3\}$, while $r$ and $s$ are generated from the uniform distribution on $[-1,1]$ and $[0,1]$, respectively. Finally, we set $c=Ax+B|x|$, where $B=-I_n$. We note that the case $\alpha = 0 $ is precisely the test problem considered in~\cite{Man08}. 
	
	\medskip 
	In this example, $\sigma_{\min}(A) > \sigma_{\max}(B)$ so that the AVE \eqref{eqn:AVE} has a unique solution (see Remark~\ref{remark:AVE_equiv_LCP}). For our experiments, we let $n=5000$ and generate 100 random AVEs as described above. We report in Table \ref{table:Example1} the success rates and averages of CPU time and number of iterations (of successful simulations) of MAP, GNM, PIM and GSM. First, note that PIM has the best average CPU time in solving the AVEs, followed by GNM and our MAP algorithm. However, in terms of reaching a solution with residual given by \eqref{eqn:stoppingcriterion}, both GNM and PIM has relatively lower success rates compared to MAP. Moreover, GNM and PIM failed to solve several test problems as $\alpha$ increases. In particular, both of these algorithms failed to solve 100 randomly generated AVEs when $\alpha = 3$. On the other hand, our algorithm is still able to solve more than $60\%$ of the problems when $\alpha =3$. Finally, notice that Gauss-Seidel method failed to solve all the problems. For this algorithm, each component of the iterate $x^{k+1}$ is obtained by solving a nonlinear equation of the form $ax-|x|=b$. This equation might not have a solution for $b\neq 0$ if $b/(a-1)<0$ and $b/(a+1)>0$, which is the reason why GSM failed in solving the generated AVEs. In fact, this problem was encountered by GSM during the first iteration for all of the test problems considered. 
\end{example}

\medskip 

\begin{table}[htbp]
	\centering
	\caption{Numerical results for Example~\ref{example1}.}
	\begin{tabular}{|c|l|r|r|r|r|}
		\hline
		\multirow{2}[4]{*}{Method} & \multirow{2}[4]{*}{} & \multicolumn{4}{c|}{$\alpha$} \bigstrut\\
		\cline{3-6}          &       & \multicolumn{1}{c|}{0} & \multicolumn{1}{c|}{1} & \multicolumn{1}{c|}{2} & \multicolumn{1}{c|}{3} \bigstrut\\
		\hline
		\multirow{3}[2]{*}{MAP} & Success($\%$) & 1     & 0.99  & 0.87  & 0.62 \bigstrut[t]\\
		& Ave. Time & 2.58  & 3.03  & 3.13  & 10.42 \\
		& Ave. Iter & 40.85 & 52.51 & 55.44 & 250.39 \bigstrut[b]\\
		\hline
		\multicolumn{1}{|c|}{\multirow{3}[2]{*}{generalized Newton method}} & Success($\%$) & 0.76  & 0.55  & 0     & 0 \bigstrut[t]\\
		& Ave. Time & 2.23  & 2.29  & $-$   & $-$ \\
		& Ave. Iter & 3.93  & 4.00  & $-$   & $-$ \bigstrut[b]\\
		\hline
		\multirow{3}[2]{*}{Picard iteration method} & Success($\%$) & 0.75  & 0.54  & 0.01  & 0 \bigstrut[t]\\
		& Ave. Time & 0.57  & 0.59  & 0.84  & $-$ \\
		& Ave. Iter & 4.99  & 5.65  & 22.00 & $-$ \bigstrut[b]\\
		\hline
		\multicolumn{1}{|c|}{\multirow{3}[2]{*}{Gauss-Seidel iteration method}} & Success($\%$) & 0     & 0     & 0     & 0 \bigstrut[t]\\
		& Ave. Time & $-$   & $-$   & $-$   & $-$ \\
		& Ave. Iter & $-$   & $-$   & $-$   & $-$ \bigstrut[b]\\
		\hline
	\end{tabular}%
	\label{table:Example1}%
\end{table}%

\medskip 

\begin{example}\label{example2}
	We set $B=-I_n$ and let $A=(A')^{\T}A'$ where $A'\in \Re^{n\times n}$ is sampled from the standard normal distribution. We also randomly generate a vector $x^*$ from the standard normal distribution, and set $c = Ax^* + B|x^*|$. For each $n\in \{500, 1000,2000,3000\}$, we generate $100$ random AVEs as described and solve them using MAP-LS, GNM, PIM and GSM. The summary of the results is reported in Table \ref{table:Example2}. For the MAP-LS algorithm, we set $N=100$ and $\delta = 10^{-3}$ in Algorithm \ref{algorithm}. In Table \ref{table:Example2}, we also report two averages of iteration numbers for MAP-LS: (i) ``Ave. Iter (MAP)'' indicates the average number of MAP iterations \eqref{eqn:MAP_2} of successful instances, and (ii) ``Ave. Iter (LS)'' indicates the average number of the linear system iterations \eqref{eqn:fixedpointalgorithm} of successful simulations. 
	
	We see from Table \ref{table:Example2} that MAP-LS used $100$ iterations of the alternating projections \eqref{eqn:MAP_2} for all the test problems, before using the iterations \eqref{eqn:fixedpointalgorithm}. Moreover, the average number of iterations via \eqref{eqn:fixedpointalgorithm} increases as the dimension $n$ increases. Despite this, it is evident that the average CPU time required by MAP-LS to solve the AVEs is significantly shorter than the time required by GNM. In fact, the gap in CPU times spent by MAP-LS and GNM becomes more apparent as the dimension of the problem increases. This is due to the fact that GNM took much more iterations than MAP-LS. Recall that using the implementation described in Section \ref{sec:implementation}, each iteration of MAP-LS requires two $n\times n$ matrix inversions, while from \eqref{eqn:GNM}, we see that GNM only needs to invert a single $n\times n$ matrix at each iteration. However, as GNM took significantly more iterations than MAP-LS, the latter significantly outperforms the former. 
	
	In addition, MAP-LS achieved at least $75\%$ success rate in solving the AVEs of different dimensions $n$, while the success rate of GNM decreases dramatically as $n$ increases. In particular, for $n=3000$, GNM only achieved less than $25\%$ success rate. Finally, both PIM and GSM failed to solve all the generated test problems after reaching the maximum number of iterations set. 
\end{example}

\medskip 
\begin{table}[htbp]
	\centering
	\caption{Numerical Results for Example~\ref{example2}}
	\begin{tabular}{|c|l|r|r|r|r|}
		\hline
		\multirow{2}[4]{*}{Method} & \multirow{2}[4]{*}{} & \multicolumn{4}{c|}{$n$} \bigstrut\\
		\cline{3-6}          &       & \multicolumn{1}{c|}{500} & \multicolumn{1}{c|}{1000} & \multicolumn{1}{c|}{2000} & \multicolumn{1}{c|}{3000} \bigstrut\\
		\hline
		\multirow{4}[2]{*}{MAP-LS} & Success($\%$) & 0.78  & 0.81  & 0.76  & 0.76 \bigstrut[t]\\
		& Ave. Time & 0.21  & 1.40  & 10.81 & 36.67 \\
		& Ave. Iter (MAP) & 100   & 100   & 100   & 100 \\
		& Ave. Iter (LS) & 10.38 & 15.14 & 19.41 & 23.38 \bigstrut[b]\\
		\hline
		\multicolumn{1}{|c|}{\multirow{3}[2]{*}{generalized Newton method}} & Success($\%$) & 0.84  & 0.84  & 0.63  & 0.22 \bigstrut[t]\\
		& Ave. Time & 0.51  & 6.28  & 74.91 & 252.69 \\
		& Ave. Iter & 118.55 & 335.90 & 803.70 & 1067.32 \bigstrut[b]\\
		\hline
		\multirow{3}[2]{*}{Picard iteration method} & Success($\%$) & 0     & 0     & 0     & 0 \bigstrut[t]\\
		& Ave. Time & $-$   & $-$   & $-$   & $-$ \\
		& Ave. Iter & $-$   & $-$   & $-$   & $-$ \bigstrut[b]\\
		\hline
		\multicolumn{1}{|c|}{\multirow{3}[2]{*}{Gauss-Seidel iteration method}} & Success($\%$) & 0     & 0     & 0     & 0 \bigstrut[t]\\
		& Ave. Time & $-$   & $-$   & $-$   & $-$ \\
		& Ave. Iter & $-$   & $-$   & $-$   & $-$ \bigstrut[b]\\
		\hline
	\end{tabular}%
	\label{table:Example2}%
\end{table}%

\begin{example}\label{example3}
	We sample the entries of $A, B\in \Re^{m\times n}$ and $x^*\in \Re^n$ from the standard normal distribution, and we set $c = Ax^*+B|x^*|$. We let $n=500$ and for each $m=rn$ with $r\in \{0.25, 0.5, 0.75, 1.5, 2, 3 \}$, we generate 100 random AVEs and solve these problems using MAP and SLA. The results are summarized in Table \ref{table:Example3}. Observe that both algorithms were able to solve all the randomly generated problems. However, it is noticeable that the difference in the average CPU time spent in solving the test problems is quite significant. More specifically, the ratios of the average CPU time of SLA to the average CPU time of MAP for the six values of $r$ considered are 516.60, 712.23,	247.15,	211.84,	1604.34 and 470.73, respectively. This shows the substantial difference in performance of the two algorithms. 
\end{example}

\begin{table}[htbp]
	\centering
	\caption{Numerical Results for Example~\ref{example3}}
	\begin{tabular}{|c|l|r|r|r|r|r|r|}
		\hline
		\multirow{2}[4]{*}{Method} & \multirow{2}[4]{*}{} & \multicolumn{6}{c|}{$r$} \bigstrut\\
		\cline{3-8}          &       & \multicolumn{1}{c|}{0.25} & \multicolumn{1}{c|}{0.5} & \multicolumn{1}{c|}{0.75} & \multicolumn{1}{c|}{1.5} & \multicolumn{1}{c|}{2} & \multicolumn{1}{c|}{3} \bigstrut\\
		\hline
		\multirow{3}[2]{*}{MAP} & Success($\%$) & 1     & 1     & 1     & 1     & 1     & 1 \bigstrut[t]\\
		& Ave. Time & 0.01  & 0.03  & 0.26  & 0.12  & 0.02  & 0.19 \\
		& Ave. Iter & 104.19 & 296.34 & 2162.84 & 227.16 & 1     & 1 \bigstrut[b]\\
		\hline
		\multicolumn{1}{|c|}{\multirow{3}[2]{*}{SLA}} & Success($\%$) & 1     & 1     & 1     & 1     & 1     & 1 \bigstrut[t]\\
		& Ave. Time & 4.21  & 19.69 & 63.60 & 26.11 & 31.33 & 90.31 \\
		& Ave. Iter & 2.38  & 3.64  & 6.11  & 1     & 1     & 1 \bigstrut[b]\\
		\hline
	\end{tabular}%
	\label{table:Example3}%
\end{table}%

\section*{Acknowledgements}
The first and second authors' research is supported by Ministry of Science and Technology, Taiwan. The third author's research is supported in part by DE200100063 from the Australian Research Council.

\bibliographystyle{amsplain}
\bibliography{bibfile}

\end{document}